\theoremstyle{plain}
\newtheorem{theorem}{Theorem}[section]
\newtheorem{asump}{Assumption \normalfont H--\!\!}
\newtheorem{lem}[theorem]{Lemma}
\newtheorem{rem}[theorem]{Remark}
\newtheorem{prop}[theorem]{Proposition}
\newtheorem{cor}[theorem]{Corollary}
\theoremstyle{remark}
\newtheorem{definition}[theorem]{Definition}
\begin{document}

\begin{frontmatter}
\title{An explicit Milstein-type scheme for interacting particle systems and McKean--Vlasov SDEs with common noise and non-differentiable drift coefficients}
\runtitle{Milstein-type scheme with non-differentiable drift coefficients}

\begin{aug}
\author[A]{\fnms{Sani}~\snm{Biswas}\ead[label=e1]{sbiswas2@ma.iitr.ac.in}},
\author[B]{\fnms{Chaman}~\snm{Kumar}\ead[label=e2]{chaman.kumar@ma.iitr.ac.in}}
\author[C]{\fnms{Neelima}~\snm{}\ead[label=e3]{neelima\_maths@ramjas.du.ac.in}}
\author[D]{\fnms{Gon\c calo dos}~\snm{Reis}\ead[label=e4]{G.dosReis@ed.ac.uk}\orcid{0000-0002-4993-2672}} 
\and
\author[E]{\fnms{Christoph}~\snm{Reisinger}\ead[label=e5]{christoph.reisinger@maths.ox.ac.uk}}
\address[A]{India Institute of Technology Roorkee, India \printead[presep={,\ }]{e1}}

\address[B]{India Institute of Technology Roorkee, India\printead[presep={,\ }]{e2}}

\address[C]{Delhi University, India\printead[presep={,\ }]{e3}}

\address[D]{Edinburgh University, United Kingdom \printead[presep={,\ }]{e4}}

\address[E]{Oxford University, United Kingdom \printead[presep={,\ }]{e5}}
\end{aug}

\begin{abstract}
We propose an explicit drift-randomised Milstein scheme for both McKean--Vlasov stochastic differential equations and associated high dimensional interacting particle systems with common noise. By using a drift randomisation step in space and measure, we establish the scheme's strong convergence rate of $1$ under reduced regularity assumptions on the drift coefficient: no classical (Euclidean) derivatives in space or measure derivatives (e.g., Lions/Fr\'echet) are required. The main result is established by enriching the concepts of bistability and consistency of numerical schemes used previously for standard SDE.  We introduce certain Spijker-type norms (and associated Banach spaces) to deal with the interaction of particles present in the stochastic systems being analysed. A discussion of the scheme's complexity is provided.
\end{abstract}
\begin{keyword}[class=MSC]
\kwd[Primary ]{65C30, 60H35}
\kwd[; secondary ]{65C05, 65C35}
\end{keyword}

\begin{keyword}
\kwd{stochastic interacting particle systems}
\kwd{McKean--Vlasov equations}
\kwd{common noise}
\kwd{Milstein scheme}
\kwd{non-differentiable drift}
\kwd{drift randomisation}
\kwd{bistability}
\end{keyword}

\end{frontmatter}

	\section{Introduction}
	For a given $T>0$, consider the following  stochastic differential equation (SDE) of McKean--Vlasov type and with common noise,
	\begin{align}
		X_t = X_0 + &\int_0^t b(s, X_s, \mathcal L^1(X_s))ds +  \sum_{\ell=1}^{m_1}  \int_0^t \sigma_1^{\ell} (s, X_s, \mathcal L^1(X_s))dW_s^\ell \notag
		\\
		&+\sum_{\ell=1}^{m_0}\int_0^t \sigma_0^{\ell} (s, X_s,\mathcal L^1(X_s))dW_s^{0,\ell} ,
		\label{eq:mc:intro}
	\end{align}
	almost surely for all $t\in [0, T]$, where $W:=\{W_t\}_{t\in [0,T]}$ and   $W^0:=\{W_t^0\}_{t\in [0,T]}$ are, respectively, $m_1$ and $m_0$ dimensional independent Wiener processes  and $\{\mathcal L^1(X_t)\}_{t\in[0,T]}$ denotes the  stochastic flow of conditional marginal laws of $X:=\{X_t\}_{t\in [0,T]}$ given $W^0$. 
	The initial value  $X_0$ is an $\mathscr{F}^0$- measurable random variable, independent of $W$ and $W^0$. 
	The McKean--Vlasov SDE \eqref{eq:mc:intro} can be viewed as an infinite-dimensional system of particles with $W$ representing the randomness inherent in the individual particle and $W^0$ the randomness common to all the particles. 
	When $\sigma_0 \equiv 0$, the particles are governed by only one source of randomness, $W$, and the stochastic flow $\{\mathcal L^1(X_t)\}_{t\in[0,T]}$ becomes a deterministic one.
	Notice that McKean--Vlasov SDEs are different from standard SDEs due to the dependence of the coefficients on the (conditional) marginal law $\mathcal L^1(X_t)$  of  $X_t$ given $W^0$, which brings additional difficulties.
	
	Due to their wide applications in areas such as Finance, mathematical neuroscience and biology, machine learning and physics --- animal swarming, cell movement induced by chemotaxis, opinion dynamics, particle movement in porous media and electrical battery modelling, self-assembly of particles and dynamical density functional theory 	(see for example \cite{Holm2006,Carrillo2010,Bolley2011,Dreyer2011,Baladron2012,Goddard2012,Kolokolnikov2013,  Bossy2015, Carmona2018-I, Carmona2018-II,  Guhlke2018,Jin2020})--- McKean--Vlasov equations and associated interacting particle systems, with or without common noise, addressed via stochastic systems or associated Fokker Plank equations (\cite{kurtz1999particle,coghi2019stochastic,CoghiNilssen2021}) have gained immense popularity. 	
	
	As in the case of SDEs, explicit solutions of McKean--Vlasov SDEs are typically not available, which necessitates the  development  of numerical schemes to approximate them. 
	The numerical approximation of McKean--Vlasov SDEs can be carried out in two steps, as explained below. 
	\begin{itemize}
		\item 
		As a first step one builds the so-called \textit{interacting particle system}, $\{X^{i,N}\}_{i\in \{1,\ldots,N\}}$, where one replaces the (conditional) marginal law appearing in the coefficients of \eqref{eq:mc:intro} by the empirical law obtained from the particles. Concretely, taking $N$ i.i.d.~copies $\{W^i\}_{i\in \{1,\ldots,N\}}$ of  $W$ and $\{X^i_0\}_{i\in \{1,\ldots,N\}}$ of  $X_0$, 

		one defines the interacting particle system associated with the above McKean--Vlasov SDE by 
			\begin{align}
				X_t^{i, N}=X_0^{i} &+\int_0^tb(s,X_s^{i, N},\mu_s^{X, N}) \, ds+ \sum_{\ell=1}^{m_1} \int_0^t\sigma_1^{\ell} (s,X_s^{i, 
					N},\mu_s^{X, N}) \, dW_s^{i,\ell}\notag
				\\
				&+\sum_{\ell=1}^{m_0}\int_0^t\sigma_0^{\ell}(s,X^{i,N}_s,\mu_s^{X,N}) \, dW_s^{0,\ell},   
				\label{eq:mcparticles:intro}
			\end{align}
			almost surely for any $t\in [0, T]$ and $i\in\{1,\ldots,N\}$,  where  
			\begin{align*}
				\mu_t^{X, N}(\cdot):=\frac{1}{N}\sum_{i=1}^{N}\delta_{X_t^{i, N}}(\cdot)
			\end{align*}
			is the  empirical measure of $N$ particles. 			
			Subsequently, one needs to show, roughly put, that $\textrm{Law}(X^{i,N}_\cdot)$ for some $i$ (fixed) convergences to $\textrm{Law}(X_\cdot)$ of \eqref{eq:mc:intro} as $N\to \infty$; see the seminal work by Sznitman \cite{sznitman1991} {(and Proposition \ref{prop:poc} below)}. 	
				\color{black}

		\item In the second step of approximation, the temporal discretization of the interacting particle system is performed to obtain fully implementable numerical schemes for the McKean--Vlasov SDEs such as Euler-type schemes and Milstein-type schemes. 	
		The main difficultly is to show that all estimates are independent of the number of particles $N$ (see Theorem \ref{thm:mainresult} and Corollary \ref{thm:mainresultCOROLLARY} below). 
		The direct application of results from classic SDE theory do not deliver this independence. 
		
	\end{itemize}
	
	\emph{It is critical to note that the results of this manuscript double for either the numerical approximation of McKean--Vlasov SDEs if one's starting point is \eqref{eq:mc:intro}, or for stand-alone systems of interacting particle SDE systems if one's starting point is \eqref{eq:mcparticles:intro}.} 
	\smallskip

	\color{black}
	Results on the \textcolor{black}{strong} well-posedness and propagation of chaos for McKean--Vlasov SDEs are being extensively researched and we cannot possibly do justice to that growing body of literature. Nonetheless,  we mention some of the milestones and more recent results fitting thematically with our manuscript: 
	for PoC results, one starts from Sznitman's seminal work \cite{sznitman1991} to the monographs \cite{Carmona2018-I, Carmona2018-II} and the recent developments by \cite{DelarueLackerRamanan2019,Lacker2023,LackerLeFlem2023} to mention a few --  overall, there are still gaps in the existing PoC results, especially across dimensions. Concretely, in \cite[Section 3.5]{ChenStockinger2023} the PoC rates across the dimension $d$ is estimated  numerically for a diffusion $\sigma$ of polynomial growth and a drift $b$ of polynomial growth (a setting outside the scope of this work) and the rates estimated are better than the rates found in any theoretical results at present. 	
	For a focus on well-posedness, one starts from the McKean's seminal work  \cite{McKean1966}, to again \cite{Carmona2018-I, Carmona2018-II} and the recent developments \cite{Bauer2018, chaudru2020, C.Kumar2021, Mehri2020, Mishura2020, Salkeld2019,ChenStockinger2023} (and references therein).   
	\color{black}
	
	The  second step of numerical approximation of McKean--Vlasov SDEs saw relatively little development after the first Euler-type scheme was proposed and analysed (in a weak sense) in \cite{Bossy1997}, but experience rapid advances in a string of recent papers,
	 \cite{bao2021-I, bao2021-II, bao2021-III, Chen2021, Chen2023, Kumar2021, C.Kumar2021, Li2022, Reis2019, Reisinger2022}.  
	In particular, \cite{Reis2019} proposed  an Euler-type numerical scheme for the interacting particle system associated with the McKean--Vlasov SDE, and its strong convergence is investigated when the drift coefficient grows super-linearly in the state variable. 
	More precisely, the drift is assumed to be one-sided Lipschitz continuous in the state variable and Lipschitz continuous in the measure variable, while the diffusion coefficient is assumed to be Lipschitz continuous in both the state and measure variables. 
	The rate of strong convergence of the scheme is shown to be equal to $1/2$.   
	In \cite{bao2021-II, Kumar2021}, a Milstein-type scheme for the interacting particle system associated with the McKean--Vlasov SDEs is proposed using the notion of Lions derivatives, introduced by  P.-L.~Lions in his lectures at the Coll\`ege de France and presented in \cite{Cardaliaquet2013}, and its strong convergence is shown with rate $1$.  
	The drift coefficient is assumed to satisfy a one-sided Lipschitz condition in the state variable and a polynomial Lipschitz condition,  and the diffusion coefficient to satisfy Lipschitz condition;
	 both are assumed to be Lipschitz continuous in the measure variable.
	Furthermore, only the diffusion coefficient is required to be once differentiable (in both state and measure variables). 
	In \cite{bao2021-II}, the authors additionally require second order differentiability of the coefficients.    
	In \cite{C.Kumar2021}, an Euler-type scheme and a Milstein-type scheme are developed for the interacting particle system connected with McKean--Vlasov SDEs with common noise, where all the coefficients are allowed to grow super-linearly in the state variable. 
\smallskip

	In this article, we develop a Milstein-type scheme for the interacting particle system corresponding to the McKean--Vlasov SDE without assuming the differentiability of the drift coefficient (in space or measure component), which is therefore more relaxed than the corresponding results in \cite{bao2021-II, Kumar2021, C.Kumar2021}. It is out of the scope of this work to lift the differentiability conditions on the diffusion coefficient and hence those assumptions match those already existing in the most recent literature. 
	The relaxation of the regularity requirement of the drift coefficient is achieved by a certain randomisation strategy that needs to be applied to \emph{both state and measure components}: the technical developments necessary to deal with this difficulty are the second contribution of this manuscript. 
			In the case of SDEs (when the coefficients do not depend on the law of the solution process), the technique of randomisation has been studied  in \cite{Kruse2019} (also \cite{Beyn2010,Kruse2012} and  and more recently in \cite{morkisz2021randomized,przybylowicz2022randomized}) to construct a  Milstein-type scheme without  assuming the first order differentiability of the drift coefficient. 	
					As the coefficients in our settings depend on the law of the solution process as well,  we require a two-fold randomisation -- one with respect to the state variable  and the other with respect to the measure variable.  
		For this, we use a uniform random variable to generate a random point in each sub-interval of the time mesh and the Euler scheme is used to obtain values of the particles' states at these random points, which are then used in the drift coefficient of the Milstein scheme, both for the state and empirical measure. 
		The precise details of this randomisation can be found in Section \ref{sec:main}. 
		Critically, the technique developed  in \cite{Kruse2019} for the analysis cannot be used directly in our settings and a novel approach is required. We observe the following.
		\begin{itemize}
			\item The interacting particle system associated with the McKean--Vlasov SDE can be treated as an $(\mathbb R^{d})^N$-dimensional SDE and thus the results of  \cite{Kruse2019} could be applied directly. However, all estimates would depend on $N$ and hence ``explode'' as $N$ tends to infinity. 
			This implies that to establish our results a new tool must be developed in order to show the independence on $N$. 
			\item We propose a new notion of bistability and consistency of the numerical scheme that is appropriate for the context of high-dimensional interacting particle systems. 
			Inspired by \cite{Kruse2019} we propose suitable stochastic Spijker norms capable of dealing with the interaction component of the particles. \textcolor{black}{Further details can be found  in Section \ref{sec:main_proof}}.
			\item A discussion on the practicalities of implementing our scheme is given in Section \ref{sec:practicalImplementation} which includes a critical view on complexity and the consequence of having common-noise. 
			
			\item In the simplest version possible of \eqref{eq:mc:intro}, three drift functions are well within the scope of our work are (linear interaction, convolution-functionals and linear interaction kernels)
\begin{align*}
	b(s, X_s, \mathcal L^1(X_s)) &=  f_1(X_s)+f_2\big ( \tilde{\mathbb E}^1(g(X_s) \big), 
		\\
	\widehat b(s, X_s, \mathcal L^1(X_s)) &=  f_1(X_s)+\int_{\mathbb{R^d}} g(X_s-y)\mathcal{L}^1(X_s)(dy),
			\\
	\tilde b(s, X_s, \mathcal L^1(X_s)) &=  f_1(X_s)+\int_{\mathbb{R^d}} \tilde g(X_s, y)\mathcal{L}^1(X_s)(dy),
\end{align*}	
for any real-valued functions $f_1,f_2,g,\tilde g$ that satisfy a standard Lipschitz condition in space (but are not differentiable), e.g., $g(x)=-|x|$ and more complex examples for $g,\tilde g$ can be found in \cite{Carrillo2010,Carmona2018-I,Carmona2018-II,Harang2020,Jin2020} \textcolor{black}{and $\tilde{\mathbb{E}}^1$ represents the conditional expectation given the common noise $W^0$}.  The 2nd drift example, $\hat b$, corresponds to the usual convolution operator fairly common in modelling with McKean--Vlasov SDE and associated interacting particle systems (with or without common noise) \cite{Carrillo2019,Harang2020,Jin2020,Adams2022}.
Lastly, we point the reader to Example 3.15 in \cite{Adams2022} that intuitively highlights why $\hat b$ is  Lipschitz in the  Wasserstein metric but is not Lions differentiable. 
		\end{itemize}

		\subsection*{Organization} The main framework of the McKean--Vlasov equation and the interacting particle system including \textcolor{black}{well-posedness} and propagation of chaos is given in Section \ref{sec:preliminaries}. The numerical scheme focusing on the approximation of the interacting particle system is found in Section \ref{sec:main} as are the main convergence results. All proofs are given in Section \ref{sec:main_proof}.

	\subsection{Notations}
	
	Both the  Euclidean norm on $ \mathbb R^d $ and the  standard  matrix norm on $ \mathbb R^{d\times m} $ are denoted by  $ |\cdot| $. 
	The notation $ \delta_x $ stands for the Dirac measure centred at $ x\in\mathbb{R}^d $. 
	We use the same notation $a^\ell$  to denote the $\ell$-th column of a matrix $ a\in\mathbb{R}^{d\times m} $ and the $ \ell $-th element of a vector $ a \in \mathbb{R}^d $. 
	$ \mathscr{B}(\chi) $ stands for the Borel  $\sigma$-algebra on a topological space $ \chi $. 
	Further,  $ \mathscr P_2(\mathbb R^d) $ denotes the space of all probability measures  on   $ (\mathbb R^d, \mathscr B(\mathbb R^d)) $ having finite second moment and equipped with the $ \mathscr{L}^2 $-Wasserstein metric given by
	\begin{align*}
		\mathcal W_2(\mu, \nu)
		:=\inf_{\pi\in\Pi(\mu,\nu)}\big[\int_{\mathbb R^d\times \mathbb R^d}|x-y|^2\pi(dx, dy)\big]^{1/2},
	\end{align*}
	for any $ \mu $, $ \nu\in\mathscr P_2(\mathbb R^d)$, where  $\Pi(\mu,\nu)$ denotes the set of couplings of  $\mu$ and $\nu$.
	Clearly, $\mathscr P_2(\mathbb R^d)$ is a Polish space under this metric.
	We use 	$\mathscr L^{ p}(\Omega)$ to denote the Banach space of all $\mathbb{R}^d$-valued random  variables $Y$ defined on a probability space $(\Omega, \mathscr{F}, \mathbb{P})$ and satisfies $\|Y\|_{\mathscr L^{ p}(\Omega)}:=\big[\mathbb{E}|Y|^p\big]^{1/p}<\infty$, {where $\mathbb{E}$ stands for expectation with respect to $\mathbb{P}$}. 
	Similarly, we use $\mathscr{L}^{p}([0,T]\times \Omega)$ to denote the Banach space of processes $Y:[0,T]\times\Omega\mapsto \mathbb R^d$ having $\|Y\|_{\mathscr{L}^{p}([0,T]\times \Omega)}=\big[\int_{0}^T\|Y(s)\|_{\mathscr{L}^{p}(\Omega)}^pds\big]^{1/p}<\infty$. 
	Also, $\mathscr{C}^{\alpha}([0,T],\,\mathscr{L}^{p}(\Omega))$ stands for the space of all $\alpha$-H\"older continuous functions $Y:[0,T]\mapsto\mathscr{L}^p(\Omega)$ with the following norm,
	\begin{align}
		\|Y\|_{\mathscr{C}^{\alpha}([0,T],\,\mathscr{L}^{p}(\Omega))}
		=\sup_{t\in[0,T]}\|Y(t)\|_{\mathscr{L}^{p}(\Omega)}+\sup_{\underset{t\neq t'}{t,t'\in[0, T]}}\frac{\|Y(t)-Y(t')\|_{\mathscr{L}^{p}(\Omega)}}{|t-t'|^\alpha}. \label{eq:norm:hold}
	\end{align}
	For a function $f:[0,T]\times\mathbb R^d\times\mathscr{P}_2(\mathbb{R}^d)\rightarrow\mathbb R$, $\partial_x f:[0,T]\times\mathbb R^d\times\mathscr{P}_2(\mathbb{R}^d)\rightarrow\mathbb R^d$ is the derivative of $f$ with respect to the space variable and $\partial_\mu f:[0,T]\times\mathbb R^d\times\mathscr{P}_2(\mathbb{R}^d)\times\mathbb R^d\rightarrow\mathbb R^d$ is the Lions derivative with respect to the measure variable. 
	{$I_A$ stands for the indicator function of a set $A$ \textcolor{black}{and $\bar{\mathbb{N}}=\mathbb{N}\cup \{0\}$. } 
The constants that appear in the paper vary from line to line, will depend on the problems data, for instance $T$, $m_0, m_1$, etc., but critically are independent of the number of particle $N$ and the schemes timestep $h$ (given below).}

	\section{McKean--Vlasov Stochastic Differential Equations and the interacting particle systems}
	\label{sec:preliminaries}
	Let $T>0$ be a   fixed constant. 	
	Consider    probability spaces   $(\Omega^1, \mathscr{F}^1, \mathbb{P}^1)$ and $(\Omega^0, \mathscr{F}^0, \mathbb P^0)$ equipped with filtrations $\mathbb F^1:=\{\mathscr{F}_t^1\}_{t\in [0, T]}$ and $\mathbb F^0:=\{\mathscr{F}_t^0\}_{t\in [0, T]}$, respectively. 
	The filtrations $\mathbb F^1$ and $\mathbb F^0$ satisfy the usual conditions, i.e., they are complete and right continuous. 
	Assume that  $W:=\{W_t  {\in \mathbb{R}^{m_1}}\}_{t\in [0,T]}$ and $W^0:=\{W_t^0 { \in \mathbb{R}^{m_0}}\}_{t\in [0,T]}$ are independent Brownian motions defined on $(\Omega^1, \mathscr{F}^1, \mathbb F^1,\mathbb{P}^1)$ and $(\Omega^0, \mathscr{F}^0,\mathbb F^0, \mathbb P^0)$,  respectively. 
	In what follows, the interacting particles are governed by i.i.d.\ copies of $W$ and $W^0$ represents the noise common to all the particles. 
	Let us define  a   product probability   space $(\tilde\Omega, \tilde{\mathscr{F}},  \tilde{\mathbb{P}})$, where $\tilde\Omega:=\Omega^1\times\Omega^0$,  $(\tilde{\mathscr{F}},\tilde{\mathbb{P}})$ is the completion of $(\mathscr{F}^1\otimes\mathscr{F}^0,\mathbb{P}^1\otimes \mathbb P^0)$ and $\tilde{\mathbb{F}}:=\{\tilde{\mathscr{F}}_t\}_{t\in[0,T]}$ is the completion and right-continuous augmentation of $\{\mathscr{F}_t^1\otimes\mathscr{F}_t^{0}\}_{t\in[0,T]}$.  
	The expectation with respect to $\tilde{\mathbb{P}}$ is denoted by $\tilde{\mathbb{E}}$.

	Let $b:[0,T]\times \mathbb{R}^d\times \mathscr{P}_2(\mathbb{R}^d)\mapsto \mathbb{R}^d$, $\sigma_1:[0,T]\times \mathbb{R}^d\times \mathscr{P}_2(\mathbb{R}^d)\mapsto \mathbb{R}^{d\times m_1}$ and $\sigma_0: [0, T]\times\mathbb{R}^d\times\mathscr{P}_2(\mathbb{R}^d)\mapsto \mathbb{R}^{d\times m_0}$  be $\mathscr B([0, T])\otimes \mathscr{B}(\mathbb R^d)\otimes \mathscr{B}(\mathscr P_2(\mathbb R^d))$-measurable functions.

	In this article, we consider the following $\mathbb R^d$-valued McKean--Vlasov stochastic differential equations (SDEs) with common noise  defined on $(\tilde\Omega, \tilde{\mathscr{F}}, \tilde{\mathbb{F}}, \tilde{\mathbb{P}})$, 
	\begin{align} \label{eq:sde}
		X_t = X_0 + &\int_0^t b(s, X_s, \mathcal L^1(X_s))ds +  \sum_{\ell=1}^{m_1}  \int_0^t \sigma_1^{\ell} (s, X_s, \mathcal L^1(X_s))dW_s^\ell\notag		
		\\
		&+\sum_{\ell=1}^{m_0}\int_0^t \sigma_0^{\ell} (s, X_s,\mathcal L^1(X_s))dW_s^{0,\ell}, 
	\end{align}
	almost surely for all $t\in [0, T]$, where $\{\mathcal L^1(X_t)\}_{t\in[0,T]}$ is the  flow of   conditional laws of $X_t$  given  $W^0$ and $X_0$. 
	{A priori, it is not certain that the flow of conditional marginals $\{\mathcal L^1(X_t)\}_{t\in [0,T]}$ is an $\mathbb{F}^0$-adapted continuous process. However, due to Lemma 2.5 in \cite{Carmona2018-II}, when the McKean--Vlasov SDE \eqref{eq:sde} has a unique $\tilde{\mathbb{F}}$-adapted continuous solution with uniformly bounded second moment, then  $\{\mathcal L^1(X_t)\}_{t\in [0,T]}$ is an $\mathbb{F}^0$-adapted continuous process.}

	We make the following assumptions.
	\begin{asump}\label{asump:initial_cond}
		$X_0\in \mathscr L^{ \bar p}(\tilde\Omega)$ for some $\bar{p}\geq 2$.
	\end{asump}
	
	\begin{asump} \label{asump:lip}
		There exists  a constant $L>0$  such that
		\begin{align*}
			|b(t, x, \mu)-b(t, x', \mu')| + \sum_{u=0}^{1}|\sigma_u(t, x,\mu)-\sigma_u(t, x',\mu')| \leq  L\{|x-x'|+\mathcal W_2(\mu, \mu')\}, 
		\end{align*}
		for all  $t \in[0, T]$, $x, x'\in \mathbb R^d$ and $\mu, \mu'\in  \mathscr P_2(\mathbb R^d)$.
	\end{asump}
	
	\begin{asump} \label{asump:holder}
		There exists a constant   $L>0$  such that   
		\begin{align*} 
			|b(t, x, \mu)-b(t', x, \mu)|
			&\leq L\{1+|x|+ \mathcal{W}_2(\mu,\delta_0) \}|t-t'|^{1/2},
			\\
			\sum_{u=0}^{1} |\sigma_u(t, x,\mu)-\sigma_u(t', x,\mu)|
			&\leq  L\{1+|x| + \mathcal{W}_2(\mu,\delta_0)\}|t-t'|,
		\end{align*}
		for all $t, t'\in[0, T]$, $x\in \mathbb R^d$   and   $\mu\in  \mathscr P_2(\mathbb R^d)$.
	\end{asump}
	
	\begin{rem} \label{rem:linear}
		From Assumptions \mbox{\normalfont {H--\ref{asump:lip}}}  and \mbox{\normalfont{H--\ref{asump:holder}}}, for all $t\in[0, T]$, $x \in \mathbb R^d $  and  $\mu\in \mathscr P_2(\mathbb R^d)$,
		\begin{align*}
			|b(t, x, \mu)| + \sum_{u=0}^1 |\sigma_u(t, x, \mu)|
			&\leq \bar L\{1+|x|+\mathcal{W}_2(\mu,\delta_0)\},
		\end{align*}
		where $\bar L=\max\big\{L,LT, L\sqrt{T}, b(0,0,\delta_0), \sigma_0(0,0,\delta_0), \sigma_1(0,0,\delta_0)\big\}$.	
	\end{rem}
	
	The proof of the following proposition can be found in \cite{Carmona2018-II,C.Kumar2021} and Appendix \ref{appendix:prop:sde_bound}.
	
	\begin{prop}[\bf Well-posedness and Moment Bounds] \label{prop:sde_bound}
		
		If Assumptions \mbox{\normalfont{H--\ref{asump:initial_cond}}} with $\bar{p}\geq 2$,  \mbox{\normalfont{H--\ref{asump:lip}}} and  \mbox{\normalfont{H--\ref{asump:holder}}} are satisfied, then the  McKean--Vlasov SDE \eqref{eq:sde} has a  unique $\tilde{\mathbb{F}}$-adapted solution $\{X_t\}_{ t\in[0, T]}$  and    
		\begin{align*}
			\big\|\sup_{t\in[0,T]}|X_t|\big\|_{\mathscr{L}^{\bar p}(\tilde\Omega)}^{\bar{p}}
			\leq C_1(1+\|X_0\|_{_{\mathscr L^{\bar p}(\tilde \Omega)}}^{\bar{p}}),
		\end{align*}	
		for $C_1:=4^{\bar p-1} \max\big\{1, 3^{\bar{p}-1} \bar{L}^{\bar{p}} \big(T^{
		\bar{p}}+2\big(\frac{\bar{p}^3 T}{2(\bar{p}-1)}\big)^{\bar{p}/2}  \big)\big\} \exp\big(12^{\bar p-1}\bar{L}^{\bar{p}}  \big(2T^{
		\bar{p}}+4\big(\frac{\bar{p}^3T}{2(\bar{p}-1)}\big)^{\bar{p}/2} \big) \big)$. 
	\end{prop}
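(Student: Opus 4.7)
My plan is to establish existence and uniqueness via a Picard-type fixed-point argument in the Banach space of $\tilde{\mathbb{F}}$-adapted continuous processes on $(\tilde\Omega,\tilde{\mathscr F},\tilde{\mathbb P})$ with finite $\bar p$-th moment, and then to derive the uniform moment estimate by a stopping-time truncation combined with Gr\"onwall's lemma. The key subtlety throughout is that the coefficients depend on the conditional law $\mathcal L^1(X_s)=\mathcal L(X_s\mid W^0, X_0)$, which is itself an $\mathbb F^0$-measurable random element of $\mathscr P_2(\mathbb R^d)$; this forces us to control the $\mathscr L^2$-Wasserstein distance between two \emph{random} measures rather than deterministic ones.

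For existence, I would construct Picard iterates $\{X^{(n)}\}_{n\ge 0}$ by setting $X^{(0)}\equiv X_0$ and defining $X^{(n+1)}$ as the solution of the (standard, random-coefficient) SDE obtained by freezing the measure argument of each coefficient to $\mathcal L^1(X^{(n)}_s)$. Each such SDE has Lipschitz coefficients with linear growth by Remark \ref{rem:linear} and so admits a unique $\tilde{\mathbb F}$-adapted continuous solution by classical theory. The synchronous-coupling bound
\begin{align*}
\mathcal W_2\big(\mathcal L^1(Y),\mathcal L^1(Z)\big)^2 \le \tilde{\mathbb E}\big[|Y-Z|^2\,\big|\, W^0\big],
\end{align*}
combined with Assumption \mbox{\normalfont{H--\ref{asump:lip}}}, the Burkholder--Davis--Gundy (BDG) inequality, and Gr\"onwall, yields the contraction
\begin{align*}
\tilde{\mathbb E}\Big[\sup_{s\le t}|X^{(n+1)}_s - X^{(n)}_s|^2\Big] \le C\int_0^t \tilde{\mathbb E}\Big[\sup_{r\le s}|X^{(n)}_r - X^{(n-1)}_r|^2\Big]ds,
\end{align*}
so $\{X^{(n)}\}$ is Cauchy in $\mathscr C([0,T],\mathscr L^2(\tilde\Omega))$ and its limit satisfies \eqref{eq:sde}. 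Uniqueness follows from the identical Lipschitz-plus-Gr\"onwall estimate applied to the difference of two solutions.

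For the moment bound I would introduce $\tau_R:=\inf\{t:|X_t|>R\}\wedge T$ to justify all $\mathscr L^{\bar p}$ manipulations, apply $|a_1+a_2+a_3+a_4|^{\bar p}\le 4^{\bar p-1}\sum_{i=1}^4 |a_i|^{\bar p}$ to the integral form of \eqref{eq:sde}, H\"older to the Lebesgue integral, and BDG with sharp constant of order $(\bar p^3/(2(\bar p-1)))^{\bar p/2}$ to each stochastic integral. Using Remark \ref{rem:linear} together with
\begin{align*}
\mathcal W_2(\mathcal L^1(X_s),\delta_0)^{\bar p} = \big(\tilde{\mathbb E}[|X_s|^2\mid W^0]\big)^{\bar p/2}\le \tilde{\mathbb E}[|X_s|^{\bar p}\mid W^0]
\end{align*}
(by Jensen, since $\bar p\ge 2$), and then taking total expectation, produces an inequality of the form
\begin{align*}
\tilde{\mathbb E}\Big[\sup_{s\le t\wedge\tau_R}|X_s|^{\bar p}\Big] \le A\big(1+\|X_0\|_{\mathscr L^{\bar p}(\tilde\Omega)}^{\bar p}\big) + B\int_0^t \tilde{\mathbb E}\Big[\sup_{r\le s\wedge\tau_R}|X_r|^{\bar p}\Big]ds,
\end{align*}
with $A,B$ matching the prefactors appearing inside the announced $C_1$. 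Gr\"onwall then delivers the stated constant, and Fatou sends $R\to\infty$.

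The main obstacle is the reduction of Wasserstein distances between random measures to conditional moments that can be closed under total expectation; the synchronous-coupling inequalities above are what allow every estimate to be absorbed into the $\mathscr L^{\bar p}(\tilde\Omega)$-norm of $X$ itself, independently of any Picard index or truncation level, so that the Gr\"onwall step yields the explicit $C_1$ claimed in the statement.
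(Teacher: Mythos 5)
Your proposal is correct, and the moment-bound portion follows essentially the same route as the paper's Appendix~\ref{appendix:prop:sde_bound}: apply $|\,\cdot\,|^{\bar p}\le 4^{\bar p-1}(\cdots)$ to the integral form of \eqref{eq:sde}, H\"older on the drift term, BDG with constant $\big(\bar p^3/(2(\bar p-1))\big)^{\bar p/2}$ on each stochastic integral, absorb the $\mathcal W_2(\mathcal L^1(X_s),\delta_0)$-terms into $\|X_s\|_{\mathscr L^{\bar p}(\tilde\Omega)}$ via conditional Jensen, and close with Gr\"onwall to produce exactly the announced $C_1$. Your addition of the stopping time $\tau_R$ followed by Fatou is a genuine (and welcome) refinement: the paper applies Gr\"onwall directly to $\|\sup_{t\le t'}|X_t|\|_{\mathscr L^{\bar p}}^{\bar p}$ without first arguing that this quantity is finite, which is a standard but unjustified shortcut.

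The well-posedness part is where you diverge from what the paper actually records. The paper does not prove existence and uniqueness at all in Appendix~\ref{appendix:prop:sde_bound}; it cites \cite{Carmona2018-II} and \cite{C.Kumar2021} for that. Your Picard iteration with the measure argument frozen at the previous iterate, together with the synchronous-coupling bound $\mathcal W_2(\mathcal L^1(Y),\mathcal L^1(Z))^2 \le \tilde{\mathbb E}[|Y-Z|^2 \mid W^0]$, is precisely the argument those references use (up to the usual $\mathscr L^2$-contraction vs. Banach fixed-point variations), so your proposal is self-contained where the paper defers. The one step you compress is the passage from the one-step integral inequality to summability of $\|X^{(n+1)}-X^{(n)}\|_{\mathscr C([0,T],\mathscr L^2)}$: one should iterate to get a factorial decay $(CT)^n/n!$, or equivalently work on a short interval and patch. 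You should also note, as the paper does after \eqref{eq:sde}, that $\{\mathcal L^1(X_t)\}_t$ being an $\mathbb F^0$-adapted continuous process (so that the frozen-coefficient SDEs in your iteration are genuinely $\tilde{\mathbb F}$-progressive) is itself nontrivial and rests on Lemma~2.5 of \cite{Carmona2018-II}. Neither of these is a gap in the sense of an error; they are the two places where more would need to be said to make the well-posedness argument fully rigorous.
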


	To introduce the interacting particle system connected with the McKean--Vlasov SDEs \eqref{eq:sde}, let us consider $N\in \mathbb N$ i.i.d.~copies of $W$ and $X_0$, denoted by $W^i$ and $X_0^i$ for $i\in\{1,\ldots, N\}$, respectively. Define the following system of equations,     
	\begin{align} \label{eq:noninteract}
		X^i_t=X^i_0 &+\int_0^tb(s,X^i_s,\mathcal L^1(X_s^i))ds+ \sum_{\ell=1}^{m_1}\int_0^t\sigma_1^{\ell} (s,X^i_s,\mathcal 
		L^1(X_s^i))dW^{i,\ell}_s\notag
		\\
		&+ \sum_{\ell=1}^{m_0}\int_0^t\sigma_0^{\ell} (s,X^i_s,\mathcal L^1(X_s^i))dW^{0,\ell}_s,
	\end{align}
	almost surely for any $t\in [0, T]$ and $i\in\{1,\ldots,N\}$. 
	Notice that due to Proposition 2.11 in \cite{Carmona2018-II},  $\mathbb{P}^0\big(\mathcal L^1(X_t^1)=\mathcal L^1(X_t^i) \mbox{ for all } t\in [0,T] \big)=1$. 
	We remark that the proof of Proposition 2.11 in  \cite{Carmona2018-II}, which establishes the result in a more restrictive setting,  uses {(only)}  the well-posedness of the system \eqref{eq:sde} and Theorem 1.33 from  \cite{Carmona2018-II} (Yamada Watanabe Theorem) and thus particles have the same law under our settings {also}.
	The system \eqref{eq:noninteract} is popularly known as the \textit{conditional non-interacting particle system}. 
	\textcolor{black}{On  approximating $\mathcal{L}^1(X_t^1)$ by the empirical measure of the states $\{X_t^i\}_{i\in\{1,\ldots, N\}}$ of $N$ particles  at time $t$, and denoted $\mu_t^{X, N}$}, 
	one obtains the following \textit{system of  interacting particles},
	\begin{align} \label{eq:interact}
		X_t^{i, N}=X_0^{i} &+\int_0^tb(s,X_s^{i, N},\mu_s^{X, N})ds+ \sum_{\ell=1}^{m_1} \int_0^t\sigma_1^{\ell} (s,X_s^{i, 
			N},\mu_s^{X, N})dW_s^{i,\ell}\notag
		\\
		&+\sum_{\ell=1}^{m_0}\int_0^t\sigma_0^{\ell}(s,X^{i,N}_s,\mu_s^{X,N})dW_s^{0,\ell},  
	\end{align}
	almost surely for any $t\in [0, T]$ and $i\in\{1,\ldots,N\}$,  where 
	\begin{align*}
		\mu_t^{X, N}(\cdot):=\frac{1}{N}\sum_{i=1}^{N}\delta_{X_t^{i, N}}(\cdot),
	\end{align*}
	is the  empirical measure of $N$ particles.

		\begin{rem}\label{rem:interact_bound}
		{
			The system \eqref{eq:interact} can be understood as an $\mathbb{R}^{d\times N}$-dimensional SDE and thus its well-posedness and moment stability up to order $\bar{p}$ follow from \cite{gyongy1980} under Assumptions \mbox{\normalfont{H--\ref{asump:initial_cond}}} (with $\bar p\geq 2$),   \mbox{\normalfont{H--\ref{asump:lip}}} and \mbox{\normalfont{H--\ref{asump:holder}}}. 
			In other words, 		the interacting particle system \eqref{eq:interact} connected with the McKean--Vlasov SDE \eqref{eq:sde} has a   unique strong solution $\{X_t^{i,N}\}_{ t\in[0, T]}$ adapted to the filtration $\{\tilde{\mathscr F}_t\}_ { t\in[0, T]}$ and one can show that     
			\begin{align*}
				\max_{i\in\{1,\ldots,N\}}\big \|\sup_{t\in[0,T]}|X_t^{i,N}|\big \|_{\mathscr{L}^{\bar p}(\tilde\Omega)}^{\bar p}  \leq C_1\Big(1+\max_{i\in\{1,\ldots,N\}}\|X_0^i\|_{_{\mathscr L^{\bar p}(\tilde \Omega)}}^{\bar p}\Big),
			\end{align*}
			where $C_1$ is the same constant as in Proposition \ref{prop:sde_bound}. Note that the RHS of the moment estimate above is \emph{critically} independent of $N$ -- this follows from \cite{C.Kumar2021} but does not from \cite{gyongy1980}. 
			}
	\end{rem}
	The following lemma gives   the time-regularity of the interacting particle system \eqref{eq:interact} and its proof is given in Appendix \ref{lemma1}.
	\begin{lem} \label{lem:dif_sde}
		Let Assumptions \mbox{\normalfont{H--\ref{asump:initial_cond}}}  with $\bar p\geq 2$, \mbox{\normalfont{H--\ref{asump:lip}}} and \mbox{\normalfont{H--\ref{asump:holder}} } hold. Then,
		\begin{align*}
			\max_{i\in\{1,\ldots,N\}} 
			\big\|X_t^{i,N}-X_{t'}^{i,N}\big\|_{\mathscr{L}^{\bar p}(\tilde\Omega)}^{\bar p} &\leq C_2|t-t'|^{ \bar p/2}\big(1+\max_{i \in \{1,\ldots,N\}}\big\|X_0^{i,N}\big\|_{\mathscr{L}^{\bar p}(\tilde\Omega)}^{\bar p}\big),
		\end{align*}
		for all $t> t'\in[0, T]$  and $N\in\mathbb N$,    
		where  
		$C_2:=(1+2C_1) 9^{\bar p-1}L^{\bar p} \big(T^{\bar p/2}+2\big( \frac{\bar{p}(\bar{p}-1)}{2}\big)^{\bar{p}/2}\big)$ and the constant $C_1$ is defined in Proposition \ref{prop:sde_bound}. 
	\end{lem}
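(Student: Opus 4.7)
The plan is to bound the increment $X_t^{i,N}-X_{t'}^{i,N}$ by treating its three constituent integrals (drift, idiosyncratic dispersion, common noise) separately, with all estimates remaining uniform in $N$. Fix $i\in\{1,\ldots,N\}$ and $t>t'$ in $[0,T]$. From \eqref{eq:interact},
\begin{align*}
X_t^{i,N}-X_{t'}^{i,N}
&=\int_{t'}^{t}b(s,X_s^{i,N},\mu_s^{X,N})\,ds
+\sum_{\ell=1}^{m_1}\int_{t'}^{t}\sigma_1^{\ell}(s,X_s^{i,N},\mu_s^{X,N})\,dW_s^{i,\ell}\\
&\qquad+\sum_{\ell=1}^{m_0}\int_{t'}^{t}\sigma_0^{\ell}(s,X_s^{i,N},\mu_s^{X,N})\,dW_s^{0,\ell}.
\end{align*}
Applying the elementary inequality $|a+b+c|^{\bar p}\leq 3^{\bar p-1}(|a|^{\bar p}+|b|^{\bar p}+|c|^{\bar p})$, I reduce the problem to estimating each of the three terms in $\mathscr L^{\bar p}(\tilde\Omega)$.

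For the drift integral I will use H\"older's inequality to pull out a factor $|t-t'|^{\bar p-1}$, leaving an integrand of the form $|b(s,X_s^{i,N},\mu_s^{X,N})|^{\bar p}$. For the two stochastic integrals I will invoke the Burkholder--Davis--Gundy inequality with the sharp constant $(\bar p(\bar p-1)/2)^{\bar p/2}$, then apply H\"older once more inside the time integral. At this stage all three terms are controlled by $|t-t'|^{\bar p/2}$ (using $|t-t'|^{\bar p}\leq T^{\bar p/2}|t-t'|^{\bar p/2}$ for the drift, which is why $T^{\bar p/2}$ surfaces in the final constant) times an integral of $\tilde{\mathbb E}[(1+|X_s^{i,N}|+\mathcal W_2(\mu_s^{X,N},\delta_0))^{\bar p}]$, after invoking the linear growth bound from Remark \ref{rem:linear}. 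Expanding the power via $(1+u+v)^{\bar p}\leq 3^{\bar p-1}(1+u^{\bar p}+v^{\bar p})$ produces the combinatorial factor $9^{\bar p-1}$ appearing in $C_2$.

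The only genuinely non-standard step, and the place where uniformity in $N$ must be secured, is the control of the empirical-measure term $\mathcal W_2(\mu_s^{X,N},\delta_0)^{\bar p}$. Here I will use the identity $\mathcal W_2(\mu_s^{X,N},\delta_0)^{2}=\frac1N\sum_{j=1}^{N}|X_s^{j,N}|^{2}$ together with Jensen's inequality (valid since $\bar p/2\geq 1$) to obtain
\begin{align*}
\mathcal W_2(\mu_s^{X,N},\delta_0)^{\bar p}
\;\leq\;\frac1N\sum_{j=1}^{N}|X_s^{j,N}|^{\bar p}
\;\leq\;\max_{j\in\{1,\ldots,N\}}|X_s^{j,N}|^{\bar p}.
\end{align*}
Taking expectations and applying the $N$-independent moment bound of Remark \ref{rem:interact_bound} then yields $\tilde{\mathbb E}[\mathcal W_2(\mu_s^{X,N},\delta_0)^{\bar p}]\leq C_1(1+\max_j\|X_0^j\|_{\mathscr L^{\bar p}(\tilde\Omega)}^{\bar p})$, and similarly for $\tilde{\mathbb E}|X_s^{i,N}|^{\bar p}$. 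This is precisely where the factor $(1+2C_1)$ in $C_2$ originates.

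Substituting these bounds back, integrating in $s$ over $[t',t]$ (which gives the missing factor $|t-t'|$ needed to complete the H\"older pairing), and finally taking the maximum over $i\in\{1,\ldots,N\}$ delivers the claimed estimate with the constant $C_2=(1+2C_1)9^{\bar p-1}L^{\bar p}\bigl(T^{\bar p/2}+2(\bar p(\bar p-1)/2)^{\bar p/2}\bigr)$. The expected main obstacle is not any single calculation but the bookkeeping ensuring that $N$ never enters the constants; the Jensen step on $\mathcal W_2(\mu_s^{X,N},\delta_0)$ together with the uniform-in-$N$ version of Remark \ref{rem:interact_bound} (crucially drawn from \cite{C.Kumar2021} rather than \cite{gyongy1980}) is what makes this possible.
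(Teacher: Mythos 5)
Your proof follows the same route as the paper: decompose the increment into drift and the two stochastic integrals, apply the elementary $3^{\bar p-1}$ inequality, use H\"older on the drift and the $L^p$ moment estimate for It\^o integrals (Theorem 7.1 in Mao, with constant $(\bar p(\bar p-1)/2)^{\bar p/2}$), invoke Remark~\ref{rem:linear}, and control the empirical Wasserstein distance through Jensen and Remark~\ref{rem:interact_bound}. The bookkeeping for the constants ($9^{\bar p-1}$, $T^{\bar p/2}$, $(1+2C_1)$) is identified correctly.

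One step is misordered. You write
\begin{align*}
\mathcal W_2(\mu_s^{X,N},\delta_0)^{\bar p}\;\leq\;\frac1N\sum_{j=1}^{N}|X_s^{j,N}|^{\bar p}\;\leq\;\max_{j}|X_s^{j,N}|^{\bar p},
\end{align*}
and then ``take expectations and apply Remark~\ref{rem:interact_bound}.'' But that remark bounds $\max_{j}\tilde{\mathbb E}\big[\sup_t|X_t^{j,N}|^{\bar p}\big]$, not $\tilde{\mathbb E}\big[\max_j|X_s^{j,N}|^{\bar p}\big]$, and in general the latter dominates the former, so the remark cannot be applied after the pointwise pass to the maximum. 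The correct chain is to stop at the sample average, take expectations there, and only then bound the average of expectations by the maximum of expectations:
\begin{align*}
\tilde{\mathbb E}\big[\mathcal W_2(\mu_s^{X,N},\delta_0)^{\bar p}\big]\;\leq\;\frac1N\sum_{j=1}^{N}\tilde{\mathbb E}|X_s^{j,N}|^{\bar p}\;\leq\;\max_{j}\tilde{\mathbb E}|X_s^{j,N}|^{\bar p}\;\leq\;C_1\big(1+\max_j\|X_0^j\|_{\mathscr L^{\bar p}(\tilde\Omega)}^{\bar p}\big).
\end{align*}
This is the order used implicitly in the paper; with that reordering your argument is complete and matches the paper's proof.
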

The convergence of the interacting particle system \eqref{eq:interact} to the non-interacting particle system \eqref{eq:noninteract} is popularly known in the literature as the \textit{propagation of chaos} and is stated in the following proposition (see Theorem 2.12 in \cite{Carmona2018-II} for details or Appendix \ref{appendix:prop:poc}). 	
	For this, let us define the empirical measure of the non-interacting particle system \eqref{eq:noninteract} as
	\begin{align*}
		\mu_t^{X}(\cdot):=\frac{1}{N}\sum_{i=1}^{N}\delta_{X_t^{i}}(\cdot)
	\end{align*}
	almost surely for any $t\in [0,T]$ and $N\in \mathbb{N}$. 	
	Further, use Theorem 5.8 in \cite{Carmona2018-I} and Proposition \ref{prop:sde_bound}  to obtain the following estimate, 
	\begin{align} \label{eq:meaure:rate}
		\big\| \mathcal W_2 (\mathcal L^1(X_t^1),\mu_t^{X})\big\|_{\mathscr{L}^{2}(\tilde \Omega)}^{2}
		& \leq C_3
		\begin{cases}
			N^{-1/2}, & \mbox{ if }  d <4,
			\\
			N^{-1/2} \log_2 N, & \mbox{ if } d=4,
			\\
			N^{-2/d},  &  \mbox{ if }d>4,
		\end{cases}
	\end{align}
	for any $t\in [0,T]$ and $N\in \mathbb{N}$, where  $C_3:=C_3(d,\bar{p}, \|X_0 \|_{\mathscr{L}^{\bar{p}}(\tilde{\Omega})}^{\bar{p}})$ is a positive constant.
	
	\begin{prop}[\bf Propagation of Chaos] 
	\label{prop:poc}
		Let Assumptions \mbox{\normalfont{H--\ref{asump:initial_cond}}} with $\bar{p}>4$,   \mbox{\normalfont{H--\ref{asump:lip}}}  and  \mbox{\normalfont{H--\ref{asump:holder}}}   hold. Then, 
		\begin{align*}
			\max_{i\in\{1,\ldots,N\}}\big\|\sup_{t\in[0,T]} &|X_t^{i}-X_{t}^{i,N}|\big\|_{\mathscr{L}^{2}(\tilde\Omega)}^2
			\leq C_4
			\begin{cases}
				N^{-1/2}, & \mbox{ if }  d <4,
				\\
				N^{-1/2}\ \log_2 N, & \mbox{ if } d=4,
				\\
				N^{-2/d},  &  \mbox{ if }d>4,
			\end{cases}
		\end{align*}
		where $ C_4:=4(3T+24)TL^2 e^{6(3T+24)TL^2} C_3(d,\bar{p}, \|X_0 \|_{\mathscr{L}^{\bar{p}}(\tilde{\Omega})}^{\bar{p}})$, where the constant $C_3$ appears in \eqref{eq:meaure:rate}.  
	\end{prop}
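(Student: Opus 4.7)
\noindent\textbf{Proof proposal for Proposition \ref{prop:poc}.} The plan is to subtract the interacting system \eqref{eq:interact} from the non-interacting system \eqref{eq:noninteract}, apply a $\sup_{t\in[0,T]}$ and take the $\mathscr{L}^2(\tilde\Omega)$-norm. Splitting into the drift, the idiosyncratic diffusion and the common-noise diffusion contributions, I would use the triangle inequality, Doob's maximal inequality on the two martingale parts (or, more conveniently, the Burkholder--Davis--Gundy inequality with constant $4$), and the Cauchy--Schwarz inequality on the Bochner integral. This produces a constant of the form $(3T+24)$: specifically $3$ from the Minkowski split, $T$ from Cauchy--Schwarz for the drift integral and $4$ (times $2$, once per noise) from Doob. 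The resulting bound involves the three terms
\begin{align*}
\int_0^t \tilde{\mathbb E}\!\left[|b(s,X_s^i,\mathcal L^1(X_s^i))-b(s,X_s^{i,N},\mu_s^{X,N})|^2\right] ds
\end{align*}
and two analogous diffusion terms. An application of Assumption \mbox{\normalfont H--\ref{asump:lip}} bounds each integrand by $2L^2\{|X_s^i-X_s^{i,N}|^2+\mathcal W_2^2(\mathcal L^1(X_s^i),\mu_s^{X,N})\}$.

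The next step is to decouple the Wasserstein term from the particle error. Using the triangle inequality on $\mathcal W_2$ I write
\begin{align*}
\mathcal W_2^2(\mathcal L^1(X_s^i),\mu_s^{X,N})\le 2\,\mathcal W_2^2(\mathcal L^1(X_s^i),\mu_s^{X})+2\,\mathcal W_2^2(\mu_s^X,\mu_s^{X,N}).
\end{align*}
The first summand is controlled in expectation by the rate recalled in \eqref{eq:meaure:rate}, which is the Fournier--Guillin estimate combined with Proposition \ref{prop:sde_bound} (this is also why we require $\bar p>4$, so the moment hypothesis of \cite{Carmona2018-I} is met). For the second summand I use the trivial coupling of the two empirical measures through the particle indices to get $\mathcal W_2^2(\mu_s^X,\mu_s^{X,N})\le \frac1N\sum_{j=1}^N|X_s^j-X_s^{j,N}|^2$. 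Exchangeability of the particles (they share the same conditional law by Proposition 2.11 in \cite{Carmona2018-II}, as invoked in the text preceding \eqref{eq:interact}) implies that $\tilde{\mathbb E}|X_s^j-X_s^{j,N}|^2$ is independent of $j$, so the average equals $\max_{i}\tilde{\mathbb E}|X_s^i-X_s^{i,N}|^2$, which is further bounded by $\max_i\tilde{\mathbb E}\sup_{u\le s}|X_u^i-X_u^{i,N}|^2$.

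Combining, and taking the maximum over $i\in\{1,\dots,N\}$ on both sides, yields an inequality of Gronwall type,
\begin{align*}
\phi(t)\le 6(3T+24)TL^2\!\int_0^t \phi(s)\,ds + 4(3T+24)TL^2\!\int_0^T\!\tilde{\mathbb E}\,\mathcal W_2^2(\mathcal L^1(X_s^1),\mu_s^{X})\,ds,
\end{align*}
where $\phi(t):=\max_{i}\|\sup_{u\le t}|X_u^i-X_u^{i,N}|\|_{\mathscr L^2(\tilde\Omega)}^2$. An application of Gronwall's lemma and insertion of the rate \eqref{eq:meaure:rate} closes the argument, giving precisely the stated constant $C_4=4(3T+24)TL^2 e^{6(3T+24)TL^2}C_3$. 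The main subtlety to take care of --- and the reason one cannot simply quote classical PoC proofs for SDE without common noise --- is that the conditional laws $\mathcal L^1(X_s^i)$ are random, $\mathbb F^0$-adapted objects; the bound of $\mathcal W_2^2(\mu_s^X,\mu_s^{X,N})$ by the pathwise average $\frac1N\sum_j|X_s^j-X_s^{j,N}|^2$ must therefore be made \emph{before} taking expectation (as it holds $\tilde{\mathbb P}$-almost surely for any coupling), and the use of exchangeability of the triples $(X_0^j,W^j,W^0)$ must be performed conditionally on $W^0$ so that the identical conditional distribution of the particles is preserved.
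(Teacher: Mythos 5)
Your proposal follows the paper's proof essentially verbatim: subtract the two systems, split into three terms with Hölder, apply Doob/BDG to the two stochastic integrals (giving the $3T+24$ constant), apply the Lipschitz assumption, split $\mathcal W_2$ into a conditional-law-vs-$\mu^X$ term and a $\mu^X$-vs-$\mu^{X,N}$ term, bound the latter pathwise by the empirical average and hence by the max over $i$, run Gronwall, and insert \eqref{eq:meaure:rate}. The only caveats are minor: the Gronwall inequality you display carries spurious extra factors of $T$ in both coefficients (they should read $6(3T+24)L^2$ and $4(3T+24)L^2$; the additional $T$ only enters afterwards from $\int_0^T ds$), although the $C_4$ you ultimately report is the correct one; and the appeal to exchangeability is unnecessary here, since the paper simply bounds $\frac1N\sum_j\tilde{\mathbb E}|X^j_s-X^{j,N}_s|^2 \leq \max_i\tilde{\mathbb E}|X^i_s-X^{i,N}_s|^2$, which requires no symmetry at all.
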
 
	The proof of this result can be found in Appendix \ref{appendix:prop:poc}.
	
	All in all, the PoC rates here are in line with the PoC results found in the numerics for McKean Vlasov SDE literature \cite{bao2021-I, bao2021-II, bao2021-III, Chen2021, Chen2023, Salkeld2019, Kumar2021, C.Kumar2021, Li2022, Reis2019, Reisinger2022}. There are improved  results obtaining rate of $1/N$ and $1/N^2$ (e.g., \cite{DelarueLackerRamanan2019,Lacker2023}) that hold under stricter conditions that do not fit the scope of the work presented in the manuscript. We have added text to this effect in the main body of the paper.

	\section{Main Result} \label{sec:main}
	The drift-randomised Milstein scheme was originally proposed and analysed for standard SDEs with non-differentiable drift coefficient in \cite{Kruse2019}. 
	In the case of McKean--Vlasov SDEs, for which the coefficients  depend on the law of the solution process as well, one needs additional randomisation of the drift coefficient with respect to the measure component and a way to deal with the implications from interacting particles.  The associated difficulties are tackled in this paper.

	\subsection{The Scheme}
	\label{subsec:scheme}
	In order to propose the randomised Milstein scheme for the interacting particle system \eqref{eq:interact}
	connected with the McKean--Vlasov SDEs \eqref{eq:noninteract}, the map $(x,\mu) \mapsto (\sigma_1(t,x,\mu),\sigma_0(t,x,\mu))$ is assumed to be continuously differentiable for every $t\in [0,T]$.
	The notion of Lions derivative (see Appendix \ref{lions} for details) is used to differentiate these functions with respect to the measure component.  
	For  $u\in\{0,1\}$,	let us define $d \times d$ matrices $\Lambda_{\sigma_u \sigma_1}$ and $\Lambda_{\sigma_u \sigma_0}$ whose $\ell$-th column are given by, 
	\begin{align}
		\Lambda_{\sigma_u\sigma_1}^{\ell}(t,s, x^i, \mu):=&\sum_{\ell_1=1}^{m_1}\int_{s}^{t}\partial_{x}\sigma_u^{\ell}(s, x^i, \mu)\sigma_1^{\ell_1}(s, x^i, \mu)dW^{i,\ell_1}_r \notag
		\\
		&+\frac{1}{N}\sum_{k= 1}^{N}\sum_{\ell_1=1}^{m_1}\int_{s}^{t}\partial_{\mu}\sigma_u^{\ell}(s, x^i, \mu,x^k)
		\sigma_1^{\ell_1}(s, x^k, \mu)dW^{k,\ell_1}_r, 
		\label{eq:lamb}
		\\
		\Lambda^{\ell}_{\sigma_u\sigma_0}(t,s, x^i,\mu):= & \sum_{\ell_1=1}^{m_0}\int_{s}^{t}\partial_{x}\sigma_u^{\ell}(s, x^i, \mu)\sigma_0^{\ell_1}(s, x^i, \mu)dW_r^{0,\ell_1} \notag
		\\
		&+\frac{1}{N}\sum_{k= 1}^{N}\sum_{\ell_1=1}^{m_0}\int_{s}^{t}\partial_{\mu}\sigma_u^{\ell}(s, x^i, \mu,x^k)
		\sigma_0^{\ell_1}(s, x^k, \mu)dW_r^{0,\ell_1}, \label{eq:lamb*}
	\end{align}
	for all $s, t\in[0, T]$, $\ell \in\{1,\ldots,m_u\}$,  $x^i \in \mathbb{R}^d$,  $i\in\{1,\ldots,N\}$, $\mu\in\mathscr{P}_2(\mathbb{R}^d)$. Also,  define the $d$-dimensional vectors $\tilde{\sigma}_1^\ell$ and $\tilde{\sigma}_0^{\ell_1}$   for any $\ell \in\{1,\ldots,m_1\}$ and $\ell_1 \in\{1,\ldots,m_0\}$ as

	\begin{equation}
		\begin{aligned}
			&\left.\begin{aligned}
				\tilde{\sigma}_1^\ell(t,s, x^i,  \mu)
				:= & \sigma_1^\ell(s, x^i, \mu)+\Lambda^\ell_{\sigma_1 \sigma_1}(t,s, x^i, \mu)
				+\Lambda^\ell_{\sigma_1\sigma_0}(t,s, x^i, \mu)\\
				\tilde{\sigma}_0^{\ell_1}(t,s, x^i,  \mu)
				:= &\sigma^{\ell_1}_0(s, x^i, \mu)+\Lambda^{\ell_1}_{\sigma_0\sigma_1}(t,s, x^i, \mu)+\Lambda^{\ell_1}_{\sigma_0\sigma_0}
				(t,s, x^i, \mu)  
			\end{aligned}\right\}\\
		\end{aligned}\label{eq:sigma_tilde}
	\end{equation}
for all    $i\in\{1,\ldots,n_h\}$.

	Now, consider a sequence  
	$\eta:=\{\eta_j\}_{j\in \mathbb N}$
	of  i.i.d.~standard uniformly distributed random variables defined  on a   probability space $(\Omega^{\eta}, \mathscr{F}^{\eta}, \mathbb P^{\eta})$, equipped with the natural filtration $\mathbb{F}^\eta:=\{\mathscr{F}_j^{\eta}\}_{j\in \mathbb N}$ of $\{\eta_j\}_{j\in \mathbb N}$. 
	Let $\mathbb{E}^\eta$ stand for the expectation with respect to $\mathbb{P}^\eta$. 
	The random variables $\{\eta_j\}_{j\in \mathbb N}$ are assumed to be independent of $W$, $W^0$, $W^i$ and $X_0^i$ for all $i\in \{1,\ldots,N\}$. 
	Now consider the general non-equidistant temporal grid $\varrho_h$ of $[0,T]$ with $n_h$ subintervals,
	\begin{align}
		\label{eq:varho-timeGrid}
		\varrho_h=\{(t_0,t_1,\ldots,t_{n_h}): 0=t_0<t_1<\cdots<t_{n_h}=T\} ,
	\end{align}
	with $h_j:= (t_j-t_{j-1})>0$  for $j\in\{1,\ldots, n_h\}$, $n_h\in \mathbb{N}$ and $h:=\max_{ j\in\{1,\ldots, n_h\}} h_j\leq \min(1,T)$.
	Now, consider a new  probability space $(\Omega, \mathscr{F},   \mathbb P)=(\tilde\Omega\times\Omega^{\eta}, \tilde{\mathscr{F}}\otimes \mathscr{F}^\eta, \tilde{\mathbb{P}} \otimes \mathbb{P}^\eta)$ equipped with a filtration $\mathbb F:=\{\mathscr{F}_j^{h}\}_{j\in\{0,\ldots,n_h\}}$; where  $\mathscr{F}_{j}^h=\tilde{\mathscr{F}}_{t_j}\otimes\mathscr{F}_j^{\eta}$.
	We will denote the expectation with respect to $\mathbb{P}$ by $\mathbb{E}$.
	
	The drift-randomised Milstein scheme for the interacting particle system \eqref{eq:interact} of the McKean--Vlasov SDE \eqref{eq:noninteract} is given by 
	\begin{align} 
		X_{j, \eta}^{i, N, h}=& X_{j-1} ^{i, N, h} +\eta_jh_j b(t_{j-1}, X_{j-1} ^{i, N, h},  \mu_{j-1}^{X, N, h}) \notag
		\\
		&+\sum_{\ell=1}^{m_1}\sigma_1^{\ell}(t_{j-1},X_{j-1} ^{i,N,h},  \mu_{j-1}^{X, N, h})\int_{t_{j-1}}^{t_{j-1}+\eta_j h_j} dW_s^{i,\ell}  \notag
		\\
		&+\sum_{\ell=1}^{m_0} \sigma_0^{\ell}(t_{j-1}, X^{i,N,h}_{j-1},\mu_{j-1}^{X,N,h})\int_{t_{j-1}}^{t_{j-1}+\eta_j h_j} dW_s^{0,\ell}  ,\label{eq:euler} 
		\\
		X_{j}^{i, N, h}= & X_{j-1}^{i, N, h}
		+ h_jb(t_{j-1}+\eta_j h_j, X_{j,\eta}^{i, N, h},\mu_{j, \eta}^{X, N, h}) \notag
		\\ 
		&+\sum_{\ell=1}^{m_1}\int_{t_{j-1}}^{t_{j}}\tilde{\sigma}_1^{\ell}(s,t_{j-1}, X_{j-1}^{i, N, h},\mu_{j-1}^{X, N, h}) dW^{i,\ell}_s  \notag
		\\
		&+\sum_{\ell=1}^{m_0}\int_{t_{j-1}}^{t_{j}}\tilde\sigma_0^{\ell}(s,t_{j-1}, X_{j-1}^{i, N, h},\mu_{j-1}^{X, N, h})dW_s^{0,\ell}, \label{eq:scheme}
	\end{align}
	almost surely for all $j\in\{1,\ldots,n_h\}$ and $i \in \{1,\ldots,N\}$ with the initial value $X_0^{i,N,h}=X_0^i$, where the empirical measures $\mu_{j-1}^{X,N,h}$ and $\mu_{j, \eta}^{X, N, h}$ are defined as 
	\begin{align}
		\mu_{j-1}^{X,N,h}(\cdot)&:=\frac{1}{N}\sum_{i=1}^{N}\delta_{X_{j-1}^{i, N,h}} (\cdot) 
		\quad\mbox{ and }\quad
		\mu_{j, \eta}^{X, N, h}(\cdot):=\frac{1}{N}\sum_{i=1}^{N}\delta_{ X_{j, \eta}^{i,  N, h}}(\cdot).
	\label{eq:em}
	\end{align}
	\begin{rem}[Comparison to prior art: classical case]
		When $\sigma_0\equiv 0$, the conditional law $\mathcal{L}^1(\cdot)$ becomes the unconditional law of the solution process. 
		If either $\mathcal{L}^1(\cdot)$ is known or $b$ and $\sigma_1$ do not depend on $\mathcal{L}^1(\cdot)$, then the McKean--Vlasov SDE \eqref{eq:sde} becomes a standard SDE. 
		In such a case, the randomised Milstein scheme \eqref{eq:scheme} considered here reduces to the one considered in \cite{Kruse2019}. 
		Indeed, the terms involving measure derivatives in \eqref{eq:lamb} and hence in \eqref{eq:sigma_tilde}, \eqref{eq:euler} and \eqref{eq:scheme} vanish in this case. 
		Additional terms that appear in \eqref{eq:euler} and \eqref{eq:scheme} are due to the dependence of  the coefficients $b$, $\sigma_1$ and $\sigma_0$ on the law of the solution process (measure variable).
	\end{rem}

			\subsubsection{Practical Implementation}
			\label{sec:practicalImplementation}
			In this section, we comment on implementation issues, specifically the sampling of the random time mesh and simulation of the L{\'e}vy area. The latter results explicitly from the presence of common noise and is not specific to our randomisation method, the former is an essential aspect of the scheme.
			
		We first note that in Equations \eqref{eq:euler} and \eqref{eq:scheme} the same uniform random variables $\eta_1$, $\ldots$, $\eta_{n_h}$ are used for each particle in the system to identify random points in each sub-interval of the temporal grid $\rho_h$. 
		This approach is similar to the {adaptive time-stepping Euler scheme} of \cite{Reisinger2022} where the same random points (arising due to  adaptive step-sizes) are used for each particle of the system. A refined version uses different time meshes for individual particles, but common, uniform
 timesteps for the definition of the empirical measure.
 In our setting, for each realisation of the common noise, a single path of random timesteps is simulated, so that the computational effort of the randomisation is comparable to the simulation of a discrete Brownian path, but negligible compared to the simulation of  iterated stochastic integrals as required for the Milstein scheme, which we discuss further now.

If the following, commutative conditions are imposed on the diffusion coefficients,
\begin{align*}
 \partial_{x}\sigma_1^{\ell}(t,  x,\mu)\sigma_1^{\ell_1}(t, x,\mu) & = \partial_{x}\sigma_1^{\ell_1}(t,  x, \mu)\sigma_1^{\ell}(t,  x, \mu),
\end{align*}
for any $\ell, \ell_1=1,\ldots,m_1$, $t\in [0,T]$, $x\in \mathbb{R}^d$ and $\mu\in\mathcal{P}_2(\mathbb{R}^d)$, then
\begin{align}
& \sum_{\ell=1}^{m_1} \sum_{\ell_1=1}^{m_1}\int_{t_{j-1}}^{t_{j}} \int_{t_{j-1}}^{s} \partial_{x}\sigma_1^{\ell}(t_{j-1},  X_{j-1}^{i, N, h},\mu_{j-1}^{X, N, h})\sigma_1^{\ell_1}(t_{j-1},  X_{j-1}^{i, N, h},\mu_{j-1}^{X, N, h})dW^{i,\ell_1}_r dW^{i,\ell}_s \notag
\\
 = & \sum_{\ell=1}^{m_1} \int_{t_{j-1}}^{t_{j}} \int_{t_{j-1}}^{s} \partial_{x}\sigma_1^{\ell}(t_{j-1},  X_{j-1}^{i, N, h},\mu_{j-1}^{X, N, h})\sigma_1^{\ell}(t_{j-1},  X_{j-1}^{i, N, h},\mu_{j-1}^{X, N, h})dW^{i,\ell}_r dW^{i,\ell}_s \notag
				\\
				& + \sum_{\ell=1}^{m_1} \sum_{\ell_1>\ell}^{m_1}\int_{t_{j-1}}^{t_{j}} \int_{t_{j-1}}^{s} \Big( \partial_{x}\sigma_1^{\ell}(t_{j-1},  X_{j-1}^{i, N, h},\mu_{j-1}^{X, N, h})\sigma_1^{\ell_1}(t_{j-1},  X_{j-1}^{i, N, h},\mu_{j-1}^{X, N, h})dW^{i,\ell_1}_r dW^{i,\ell}_s \notag
				\\
				& \qquad+  \partial_{x}\sigma_1^{\ell_1}(t_{j-1},  X_{j-1}^{i, N, h},\mu_{j-1}^{X, N, h})\sigma_1^{\ell}(t_{j-1},  X_{j-1}^{i, N, h},\mu_{j-1}^{X, N, h})dW^{i,\ell}_r dW^{i,\ell_1}_s \Big) \notag
				\\
				&= \sum_{\ell=1}^{m_1}  \partial_{x}\sigma_1^{\ell}(t_{j-1},  X_{j-1}^{i, N, h},\mu_{j-1}^{X, N, h})\sigma_1^{\ell}(t_{j-1},  X_{j-1}^{i, N, h},\mu_{j-1}^{X, N, h}) \frac{1}{2}\big((\Delta W^{i,\ell})^2-h_j\big)  \notag
				\\
				& + \sum_{\ell=1}^{m_1} \sum_{\ell_1>\ell}^{m_1}  \partial_{x}\sigma_1^{\ell}(t_{j-1},  X_{j-1}^{i, N, h},\mu_{j-1}^{X, N, h})\sigma_1^{\ell_1}(t_{j-1},  X_{j-1}^{i, N, h},\mu_{j-1}^{X, N, h}) \Delta W^{i,\ell} \Delta W^{i,\ell_1}  \notag
				\\
				=& \frac{1}{2}\sum_{\ell=1}^{m_1} \sum_{\ell_1=1}^{m_1}  \partial_{x}\sigma_1^{\ell}(t_{j-1},  X_{j-1}^{i, N, h},\mu_{j-1}^{X, N, h})\sigma_1^{\ell_1}(t_{j-1},  X_{j-1}^{i, N, h},\mu_{j-1}^{X, N, h})( \Delta W^{i,\ell} \Delta W^{i,\ell_1}-h_j I_{\{\ell=\ell_1}\}), \notag
\end{align}
		almost surely for any $j\in\{1,\ldots,n_h\}$ and $i \in \{1,\ldots,N\}$,
and thus one can  write the  third term on the right-hand side of \eqref{eq:scheme}  as
 \begin{align*}
& \sum_{\ell=1}^{m_1}  \int_{t_{j-1}}^{t_{j}}\tilde{\sigma}_1^{\ell}(s,t_{j-1}, X_{j-1}^{i, N, h},\mu_{j-1}^{X, N, h}) dW^{i,\ell}_s 
 =  \sum_{\ell=1}^{m_1} \sigma_1^{\ell}(t_{j-1}, X_{j-1}^{i, N, h},\mu_{j-1}^{X, N, h}) \Delta W^{i,\ell}  \notag
 \\
 & + \frac{1}{2}\sum_{\ell=1}^{m_1} \sum_{\ell_1=1}^{m_1}  \partial_{x}\sigma_1^{\ell}(t_{j-1},  X_{j-1}^{i, N, h},\mu_{j-1}^{X, N, h})\sigma_1^{\ell_1}(t_{j-1},  X_{j-1}^{i, N, h},\mu_{j-1}^{X, N, h})( \Delta W^{i,\ell} \Delta W^{i,\ell_1}-h_j I_{\{\ell=\ell_1}\}) \notag
 \\
 & + \sum_{\ell=1}^{m_1} \int_{t_{j-1}}^{t_{j}}  \frac{1}{N}\sum_{k= 1}^{N}\sum_{\ell_1=1}^{m_1}\int_{t_{j-1}}^{s}\partial_{\mu}\sigma_1^{\ell}(t_{j-1}, X_{j-1}^{i, N, h}, \mu_{j-1}^{X, N, h}, X_{j-1}^{k, N, h}) \notag
		\\
		&\qquad \qquad \sigma_1^{\ell_1}(t_{j-1}, X_{j-1}^{k, N, h}, \mu_{j-1}^{X, N, h})dW^{k,\ell_1}_r dW^{i,\ell}_s \notag
		\\
	& + \sum_{\ell=1}^{m_1} \sum_{\ell_1=1}^{m_0}\int_{t_{j-1}}^{t_{j}} \int_{t_{j-1}}^{s} \partial_{x}\sigma_1^{\ell}(t_{j-1},  X_{j-1}^{i, N, h},\mu_{j-1}^{X, N, h})\sigma_0^{\ell_1}(t_{j-1},  X_{j-1}^{i, N, h},\mu_{j-1}^{X, N, h})dW^{0,\ell_1}_r dW^{i,\ell}_s	
	\\
	 & + \sum_{\ell=1}^{m_1} \int_{t_{j-1}}^{t_{j}}  \frac{1}{N}\sum_{k= 1}^{N}\sum_{\ell_1=1}^{m_0}\int_{t_{j-1}}^{s}\partial_{\mu}\sigma_1^{\ell}(t_{j-1}, X_{j-1}^{i, N, h}, \mu_{j-1}^{X, N, h}, X_{j-1}^{k, N, h}) \notag
		\\
		& \qquad \qquad \sigma_0^{\ell_1}(t_{j-1}, X_{j-1}^{k, N, h}, \mu_{j-1}^{X, N, h})dW^{0,\ell_1}_r dW^{i,\ell}_s. \notag
 \end{align*}
 In the above, the last three terms require the approximation of the L\'evy area, which can be done with the help of the techniques developed in \cite{wiktorsson2001}. 
 A similar conclusion holds for the fourth term on the right-hand side of \eqref{eq:scheme}. 
 Furthermore, the terms involving Lions derivatives are of order 
 {$\mathcal{O}(1/N)$, as shown in \cite[Proof of Proposition 2.3]{bao2021-II} for the regular case,} 
 and hence can be ignored when $N$ is large.  In addition, if $\sigma_0\equiv 0$, i.e.,  the common noise term is not present, the fourth term on the right-hand side of the above equation can also be dropped and thus we have
 \begin{align*}
 & \sum_{\ell=1}^{m_1}  \int_{t_{j-1}}^{t_{j}}\tilde{\sigma}_1^{\ell}(s,t_{j-1}, X_{j-1}^{i, N, h},\mu_{j-1}^{X, N, h}) dW^{i,\ell}_s 
 =  \sum_{\ell=1}^{m_1} \sigma_1^{\ell}(t_{j-1}, X_{j-1}^{i, N, h},\mu_{j-1}^{X, N, h}) \Delta W^{i,\ell}  \notag
 \\
 & + \frac{1}{2}\sum_{\ell=1}^{m_1} \sum_{\ell_1=1}^{m_1}  \partial_{x}\sigma_1^{\ell}(t_{j-1},  X_{j-1}^{i, N, h},\mu_{j-1}^{X, N, h})\sigma_1^{\ell_1}(t_{j-1},  X_{j-1}^{i, N, h},\mu_{j-1}^{X, N, h})( \Delta W^{i,\ell} \Delta W^{i,\ell_1}-h_j I_{\{\ell=\ell_1}\}), \notag
 \end{align*}
 which leads to a fully implementable randomised Milstein scheme. 
Summing up, when $N$ is large and $\sigma_0\equiv 0$, the randomised Milstein scheme can be reduced to
 \begin{align*}
 &X_{j, \eta}^{i, N, h}= X_{j-1} ^{i, N, h} +\eta_jh_j b(t_{j-1}, X_{j-1} ^{i, N, h},  
		\mu_{j-1}^{X, N, h})+\sum_{\ell=1}^{m_1}\sigma_1^{\ell}(t_{j-1},X_{j-1} ^{i,N,h},  \mu_{j-1}^{X, N, h})(W^{i,\ell}_{t_{j-1}+\eta_j h_j}  -W^{i,\ell}_{t_{j-1}}),
		\notag
		\\
		&X_{j}^{i, N, h}=  X_{j-1}^{i, N, h}+ h_jb(t_{j-1}+\eta_j h_j, X_{j,\eta}^{i, N, h},\mu_{j, \eta}^{X, N, h})  + \sum_{\ell=1}^{m_1} \sigma_1^{\ell}(t_{j-1}, X_{j-1}^{i, N, h},\mu_{j-1}^{X, N, h}) \Delta W^{i,\ell}  \notag
 \\
 & + \frac{1}{2}\sum_{\ell=1}^{m_1} \sum_{\ell_1=1}^{m_1}  \partial_{x}\sigma_1^{\ell}(t_{j-1},  X_{j-1}^{i, N, h},\mu_{j-1}^{X, N, h})\sigma_1^{\ell_1}(t_{j-1},  X_{j-1}^{i, N, h},\mu_{j-1}^{X, N, h})( \Delta W^{i,\ell} \Delta W^{i,\ell_1}-h_j I_{\{\ell=\ell_1}\}), \notag
\end{align*}  
almost surely for any $j\in\{1,\ldots,n_h\}$ and $i \in \{1,\ldots,N\}$.

			\subsection{The Main Convergence Result and Its Assumptions}
			In order to investigate the  rate of convergence of  the randomised Milstein scheme \eqref{eq:scheme}, we make the following additional assumptions.

			\begin{asump} \label{asump:derv_lip} 	There exists  a constant $L>0$   such that
				\begin{align*}
					|\partial_x\sigma_u^{\ell}(t, x,\mu)-\partial_x\sigma_u^{\ell}(t, x',\mu')|
					&\leq  L\big\{|x-x'|+\mathcal W_2(\mu, \mu')\big\},
					\\
					|\partial_\mu\sigma_u^{\ell}(t, x,\mu,y)-\partial_\mu\sigma_u^{\ell}(t, x',\mu',y')|
					&\leq  L\big\{|x-x'|+|y-y'|+\mathcal W_2(\mu, \mu')\big\}, 
				\end{align*}
				for all $u\in\{0,1\}$, $\ell\in\{1,\ldots,m_u\}$, $t\in[0, T]$,  $x, x', y, y'\in \mathbb R^d$  and  $ \mu, \mu'\in \mathcal P_2(\mathbb R^d)$.
			\end{asump}

			\begin{asump} \label{asump:derv_lip*}
				There exists  a constant $L>0$   such that
				\begin{align*}
					|\partial_{x}\sigma_u^{\ell}(t,x,\mu)\sigma_v^{\ell_1}(t,x,\mu)-\partial_{x}\sigma_u^{\ell}(t,x',\mu')
					\sigma_v^{\ell_1}(t, x',\mu')| \leq &  L \{|x-x'|+\mathcal W_2(\mu, \mu')\},
					\\
					|\partial_{\mu}\sigma_u^{\ell}(t,x,\mu,y)\sigma_v^{\ell_1}(t,y,\mu)-\partial_{\mu}\sigma_u^{\ell}(t,x',
					\mu',y')\sigma_v^{\ell_1}(t, y',\mu')| \leq  & L \{|x-x'|+|y-y'|+\mathcal W_2(\mu, \mu')
					\},
				\end{align*}
				for all $u,v\in\{0,1\}$, $\ell\in\{1,\ldots,m_u\}$, $\ell_1\in\{1,\ldots,m_v\}$, $t\in[0, T]$, $x, x',y,y'\in \mathbb R^d$ and $\mu, \mu'\in \mathcal P_2(\mathbb R^d)$.
			\end{asump} 	
			\begin{rem} \label{rem:derv_bound}
				Due to Assumption \mbox{\normalfont{H--\ref{asump:lip}}}, we have  
				\begin{align*}
					|\partial_x\sigma_u^{\ell}(t, x,\mu)|+|\partial_\mu\sigma_u^{\ell}(t, x,\mu,y)|&\leq L,
				\end{align*}
				for  all $u\in\{0,1\}$, $\ell\in\{1,\ldots,m_u\}$,  $t\in[0, T]$, $x,y \in \mathbb R^d $ and $\mu\in \mathcal P_2(\mathbb R^d)$.
			\end{rem}

			Below, we state  the main result of this paper containing the error rate for the approximation of the scheme to the interacting particle system \eqref{eq:interact}. 
	\textcolor{black}{For notational simplicity, we use  $X_{j}^{i,N}$ to represent  $X_{t_j}^{i,N}$ for any $i\in \{1,\ldots,N\}$ and $j \in\{1,\ldots,n_h\}$}. 	
	This result is proved in Section \ref{sec:rate}.		

			\begin{theorem}\label{thm:mainresult}
				Let Assumptions \mbox{\normalfont{H--\ref{asump:initial_cond}}}  with $\bar p\geq 4$,  \mbox{\normalfont{H--\ref{asump:lip}}}  to \mbox{\normalfont{H--\ref{asump:derv_lip*}}} hold.  Then, the drift-randomised Milstein scheme \eqref{eq:scheme} converges in the strong sense to the true solution of the interacting particle system \eqref{eq:interact} with order 1. Concretely, for $q=\bar{p}/2$ and $h\leq \min(1,T)$ we have 
				\begin{align*}
					\max_{i\in\{1,\ldots,N\}}\big\|\max_{j\in\{0,\ldots,n_h\}}|X_{j}^{i,N}-X_j^{i,N,h}|\big\|_{\mathscr{L}^{q}(\Omega)}\leq C_7 C_{10}h,
				\end{align*}
				where the positive constants $C_7$ and $C_{10}$ appear in Proposition \ref{prop:residual} and Proposition \ref{prop:consistency}, respectively.  
			\end{theorem}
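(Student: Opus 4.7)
The plan is to recast the proof in the bistability-consistency framework of \cite{Kruse2019}, generalised to the interacting-particle setting. Once bistability and consistency are each in hand as the two propositions quoted in the statement, the theorem follows immediately: by construction, the numerical iterates $X_j^{i,N,h}$ are the exact zeros of the one-step operator $\Phi$ underlying the scheme, hence
\begin{align*}
\max_i\big\|\max_j|X_j^{i,N}-X_j^{i,N,h}|\big\|_{\mathscr{L}^q(\Omega)}
\leq C_7 \max_i \big\|\Phi\big(X^{i,N}\big)\big\|_{\star}
\leq C_7 C_{10}\, h,
\end{align*}
where $\|\cdot\|_\star$ denotes the stochastic Spijker-type norm, the first inequality is bistability (Proposition \ref{prop:residual}), and the second is consistency (Proposition \ref{prop:consistency}) applied to the local one-step residual produced by substituting the true interacting solution into the scheme.

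Concretely, I would first fix the space of $\tilde{\mathbb{F}}$-adapted discrete-time processes indexed by the mesh $\varrho_h$, equipped with an $\mathscr{L}^q$-based stochastic Spijker norm designed so that its equivalence with the pathwise sup-norm is independent of both $N$ and $h$. The Milstein scheme \eqref{eq:euler}--\eqref{eq:scheme} then defines a map $\Phi$ acting coordinate-wise on $N$-tuples of such processes, with $\Phi(X^{\cdot,N,h})\equiv 0$ by definition. Substituting the true solution $(X_j^{i,N})_{j,i}$ yields a local residual that I would show to be $O(h)$: the differences between the Milstein integrands $\tilde{\sigma}_1^\ell,\tilde{\sigma}_0^\ell$ in \eqref{eq:sigma_tilde} and the exact It\^o integrals are controlled by standard It\^o--Taylor expansions using Assumption \mbox{\normalfont{H--\ref{asump:derv_lip}}}, while the crucial role of the uniform random variables $\eta_j$ is to absorb what would otherwise be the $O(h^{1/2})$ error from evaluating $b$ at $t_{j-1}$; the randomised midpoint acts as a Monte Carlo quadrature of the drift integral over $[t_{j-1},t_j]$, lifting its contribution from order $1/2$ to order $1$. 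The empirical measure $\mu_{j,\eta}^{X,N,h}$ is analysed analogously via Assumption \mbox{\normalfont{H--\ref{asump:lip}}} together with Lemma \ref{lem:dif_sde}.

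The main obstacle is the bistability step in Proposition \ref{prop:residual}. The naive route --- treating the system as an $\mathbb{R}^{dN}$-dimensional standard SDE and invoking the SDE bistability of \cite{Kruse2019} directly --- produces stability constants that blow up with $N$. The remedy is to work with the particle-wise exterior norm $\max_i\|\cdot\|_{\mathscr{L}^q(\Omega)}$ and exploit that for any two $N$-tuples of states the Wasserstein distance between the associated empirical measures is bounded by $\mathcal{W}_2(\mu^{X,N},\mu^{Y,N})^2\leq \frac{1}{N}\sum_k|X^k-Y^k|^2 \leq \max_k|X^k-Y^k|^2$, so that the interparticle coupling inherent in \eqref{eq:interact} does not accrue powers of $N$ when combined with Assumptions \mbox{\normalfont{H--\ref{asump:lip}}} and \mbox{\normalfont{H--\ref{asump:derv_lip*}}}. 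The Lions-derivative cross-terms in $\Lambda_{\sigma_u\sigma_v}$ (cf.\ \eqref{eq:lamb}--\eqref{eq:lamb*}) are treated similarly: their stochastic integrals are bounded via Burkholder--Davis--Gundy and Remark \ref{rem:derv_bound}, with the symmetry of the $\frac{1}{N}\sum_k$ sums converting into a single $\max_k$ bound after the outer $\mathscr{L}^q$ norm is taken. Once a constant $C_7$ independent of $N$ and $h$ is secured in this way, combining with the consistency estimate $C_{10}h$ yields the claimed strong order one.
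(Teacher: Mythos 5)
Your proposal is correct and matches the paper's argument: the theorem is proved exactly by substituting the grid-sampled true solution $X^{\varrho_h}$ for $Y^h$ in the bistability estimate of Proposition~\ref{prop:residual}, which bounds the $\mathscr G^h_q$-error by $C_7\|\mathcal R[X^{\varrho_h}]\|_{\mathscr G^h_{S,q}}$, and then invoking the consistency estimate $\|\mathcal R[X^{\varrho_h}]\|_{\mathscr G^h_{S,q}}\leq C_{10}h$ from Proposition~\ref{prop:consistency}. Your surrounding discussion (the $\max_i$ exterior norm, the $\mathcal W_2$-to-$\max_k$ bound, the randomised quadrature lifting the drift error to order one, and the treatment of the $\Lambda$-cross-terms via BDG and Remark~\ref{rem:derv_bound}) also reflects accurately how the two propositions are proved in the paper.
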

A direct combination of Proposition \ref{prop:poc} and Theorem \ref{thm:mainresult} delivers the control on the error for the numerical approximation of the McKean--Vlasov Equation SDE \eqref{eq:sde} (and \eqref{eq:noninteract}). 
\textcolor{black}{For convenience of notation,   $X_{j}^{i}$ is used to denote  $X_{t_j}^{i}$ for any $i\in \{1,\ldots,N\}$ and $j \in\{1,\ldots,n_h\}$}.

\begin{cor}
\label{thm:mainresultCOROLLARY}
Let assumptions of  Theorem \ref{thm:mainresult} hold with $\bar p> 4$. 
Then, the drift randomized Milstein scheme \eqref{eq:scheme} converges in the strong sense to the true solution of the McKean--Vlasov SDE \eqref{eq:sde}, 
 \begin{align*}
					\max_{i\in\{1,\ldots,N\}}\big\|\max_{j\in\{0,\ldots,n_h\}}|X_{j}^{i}-X_j^{i,N,h}|\big\|_{\mathscr{L}^{2}(\Omega)}\leq 
\begin{cases}
\sqrt{C_4} N^{-1/4}+C_7 C_{10}h, & \mbox{ if } d<4 
\\
\sqrt{C_4} N^{-1/4} \sqrt{\log_2N} + C_7 C_{10}h, & \mbox{ if } d=4 
\\
\sqrt{C_4} N^{-1/d}  + C_7 C_{10}h, & \mbox{ if } d>4 
\end{cases}					
				\end{align*}
				with $h \leq \min\{1,T\}$ and $ C_4:=2(3T+24)TL^2 e^{4(3T+24)TL^2} C_3(d,\bar{p}, \|X_0 \|_{\mathscr{L}^{\bar{p}}(\tilde{\Omega})}^{\bar{p}})$ where the constant $C_3$ appears in \eqref{eq:meaure:rate}, and $C_7$ and $C_{10}$ come from Proposition \ref{prop:residual} and  Proposition \ref{prop:consistency} respectively.  
\end{cor}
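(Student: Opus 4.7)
The plan is a direct triangle-inequality decomposition combining the two ingredients already established earlier in the paper. For each fixed $i\in\{1,\ldots,N\}$ and $j\in\{0,\ldots,n_h\}$ I would write
\begin{align*}
X_j^i - X_j^{i,N,h} = \bigl(X_j^i - X_j^{i,N}\bigr) + \bigl(X_j^{i,N} - X_j^{i,N,h}\bigr),
\end{align*}
so that Minkowski's inequality in $\mathscr{L}^2(\Omega)$ yields
\begin{align*}
\big\|\max_{j}|X_j^i - X_j^{i,N,h}|\big\|_{\mathscr{L}^2(\Omega)}
\leq \big\|\sup_{t\in[0,T]}|X_t^i - X_t^{i,N}|\big\|_{\mathscr{L}^2(\Omega)}
+ \big\|\max_{j}|X_j^{i,N} - X_j^{i,N,h}|\big\|_{\mathscr{L}^2(\Omega)}.
\end{align*}

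For the first summand I would invoke Proposition \ref{prop:poc} (which requires $\bar p > 4$, matching the hypothesis of the corollary). Taking square roots in the three cases $d<4$, $d=4$, $d>4$ produces the rates $N^{-1/4}$, $N^{-1/4}\sqrt{\log_2 N}$ and $N^{-1/d}$ respectively, with the square-rooted constant $\sqrt{C_4}$. For the second summand I would apply Theorem \ref{thm:mainresult}, which gives the bound $C_7 C_{10}\, h$ in the $\mathscr{L}^q$-norm for $q=\bar p/2$. Since $\bar p>4$ forces $q>2$, Jensen's inequality on the probability space $(\Omega,\mathscr F,\mathbb P)$ gives $\|\cdot\|_{\mathscr L^2(\Omega)}\leq \|\cdot\|_{\mathscr L^q(\Omega)}$, so the same bound $C_7 C_{10}\, h$ holds in $\mathscr{L}^2$.

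Taking the maximum over $i\in\{1,\ldots,N\}$ commutes with the sum on the right-hand side of the triangle inequality (by enlarging each term separately), which then gives exactly the three-case estimate claimed in the statement. There is essentially no obstacle here: the entire work has already been done in Proposition \ref{prop:poc} and Theorem \ref{thm:mainresult}; the only book-keeping is (i) confirming the integrability condition $\bar p>4$ covers both prerequisites simultaneously, and (ii) reconciling the exact form of the constant $C_4$ quoted in the corollary with the one appearing in Proposition \ref{prop:poc} (a minor recomputation of the Gr\"onwall-type factor after taking the square root in $\mathscr{L}^2$).
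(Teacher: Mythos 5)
Your proposal is correct and takes essentially the same approach the paper intends: the paper itself states that the corollary follows from ``a direct combination of Proposition \ref{prop:poc} and Theorem \ref{thm:mainresult},'' which is exactly the triangle-inequality decomposition through $X_j^{i,N}$ that you carry out, with the square root of the PoC rate and Jensen's inequality (using $q=\bar p/2>2$) to pass from $\mathscr{L}^q$ to $\mathscr{L}^2$. The discrepancy in the prefactor and exponent of $C_4$ between Proposition \ref{prop:poc} and the corollary is an inconsistency internal to the paper rather than a gap in your argument, so your note about ``reconciling'' the constant is well placed.
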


\color{black}
\begin{rem}[Application to optimal control and machine learning]
In an $n$-player stochastic differential game with an $\left(\mathbb{R}^d\right)^n$-valued state process $\boldsymbol{X}=\left(X^1, \ldots, X^n\right)$, agent $i$ chooses a control process $(\alpha^i_t)$ with values in an action space $A$
so as to minimize some target functional (see, e.g., the framework considered in \cite[(1.1)]{DelarueLackerRamanan2019}).
For a 
\emph{Markovian control} $\alpha^i_t=\alpha^i\left(t, \boldsymbol{X}_t\right)$,
the resulting dynamics can be described by a (controlled) McKean--Vlasov SDE,
\begin{align}
\label{eq1.1fromDelarueLackerRamanan2019-REMARK}
d X_t^i=\widehat b\big(X_t^i, m_{\boldsymbol{X}_t}^n, \alpha^i\left(t, \boldsymbol{X}_t\right)\big) \, d t+\sigma \, d B_t^i+\sigma_0 \, d W_t.
\end{align}
Throughout, $W$ and $B^1, \ldots, B^n$ are independent Wiener processes, and we write
\begin{align*}
m_{\boldsymbol{x}}^n=\frac{1}{n} \sum_{k=1}^n \delta_{x_k}
\end{align*}
to denote the empirical measure of a vector $\boldsymbol{x}=\left(x_1, \ldots, x_n\right)$ in $\left(\mathbb{R}^d\right)^n$. 
A similar situation arises in mean-field control, where a central agent chooses the same feedback control $\alpha\left(t, \boldsymbol{X}_t\right)$ for each agent so as to minimise their (the central agent's) objective.

In both these situations, for a non-differentiable control of (Markovian) type, $\alpha(t,X_t)$, appearing in $\widehat b$ and not appearing in $\sigma$ or $\sigma_0$ (as Assumption H-\ref{asump:derv_lip} and H-\ref{asump:derv_lip*} require differentiability), then our approximation scheme will be applicable to the simulation of the controlled (mean-field) SDE and still produce an approximation of strong order $1$ as long as one can establish sufficient regularity of $\alpha^\cdot$ such that Assumption H-\ref{asump:lip} and H-\ref{asump:holder} holds for the modified drift $b$ 
\[
[0,T]\times \mathbb R^d \times \mathcal P_2(\mathbb R^d) \ni (t,x,\mu) \mapsto b(t,x,\mu):= \widehat b\big(t,x,\mu,\alpha (t,x)\big).
\]
This exact same argument would work for controlled SDE in classical settings where the control is non-differentiable, e.g., when (adapting from \eqref{eq1.1fromDelarueLackerRamanan2019-REMARK})
\begin{align*}
d X_t=\widehat b\big(t,X_t, \alpha \left(t, {X}_t\right)\big) \, d t+ \sigma(t,X_t) \, d W_t. 
\end{align*}
In situations when $\alpha$ is a random field our theory would not apply directly (Kruse et al.'s \cite{Beyn2010,Kruse2012,Kruse2019} or \cite{morkisz2021randomized,przybylowicz2022randomized} as well). It might be possible to address space-measure mean-field controls $\alpha(t,X_t,\mu_t)$ as in \cite{PossamaiTangpi2021} but \cite{PossamaiTangpi2021} also shows that proving regularity properties for $\alpha$ in its measure component is involved. 

Lastly, our method also fits into a situation where machine learning is applied, via tools like reinforcement learning or policy iteration, to solve the optimal control problem. The requirement is a suitable choice of control/policy iteration class that would ensure Assumption H-\ref{asump:lip} and H-\ref{asump:holder} holds. 
A popular choice, especially in the moderate- to high-dimensional context, are deep neural networks, and a commonly used activation function therein is a ReLU, which makes the resulting parametric ansatz function Lipschitz but not everywhere differentiable; see \cite{reisinger2021fast} for applications of such a policy gradient method to non-smooth mean-field control, and to \cite{reisinger2022linear} for a proof that the resulting feedback control remains uniformly Lipschitz over the iterations, in a setting with controlled drift but without mean-field interaction.
\end{rem}
\color{black}

			\section{Moment Bound}
			In this section, we assume throughout that the conditions of Theorem \ref{thm:mainresult} are in force. Here, 	
			we establish moment bounds for the scheme \eqref{eq:scheme}, but before proving it (Lemma \ref{lem:mb}), we state and prove the following auxiliary result. 
			\begin{lem}\label{lem:Gamma_bound}  
				Let Assumptions  \mbox{\normalfont{ H--\ref{asump:lip}}}  and \mbox{\normalfont{H--\ref{asump:holder}}}  be satisfied. 
				For some $p \geq 2$,  if 	 $X_{j-1}^{i, N,h}\in \mathscr{L}^{p}(\Omega) $  for any $i\in\{1,\ldots, N\}$ and $j\in \{1,\ldots,n_h\}$,  then
				\[ X_{j,\eta}^{i, N, h}\in \mathscr{L}^{p}(\Omega) \mbox{ and } X_{j}^{i, N, h}\in \mathscr{L}^{p}(\Omega),\]
				for all $j\in \{1,\ldots,n_h\}$  and $i\in\{1,\ldots,N\}$.
			\end{lem}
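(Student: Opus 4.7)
The strategy is to bound $\|X_{j,\eta}^{i,N,h}\|_{\mathscr{L}^p(\Omega)}$ first using \eqref{eq:euler}, and then use this to bound $\|X_{j}^{i,N,h}\|_{\mathscr{L}^p(\Omega)}$ via \eqref{eq:scheme}. Throughout, the key auxiliary observation is that for any fixed $j$,
\[
\mathcal{W}_2\bigl(\mu_{j-1}^{X,N,h},\delta_0\bigr)^2 \; \leq \; \frac{1}{N}\sum_{k=1}^N \bigl|X_{j-1}^{k,N,h}\bigr|^2,
\]
so that Minkowski's inequality and the standing hypothesis $X_{j-1}^{k,N,h}\in\mathscr{L}^p(\Omega)$ for every $k$ give
\[
\bigl\| \mathcal{W}_2(\mu_{j-1}^{X,N,h},\delta_0)\bigr\|_{\mathscr{L}^p(\Omega)} \; \leq \; \Bigl(\tfrac{1}{N}\sum_{k=1}^N\bigl\|X_{j-1}^{k,N,h}\bigr\|_{\mathscr{L}^p(\Omega)}^2\Bigr)^{1/2}<\infty.
\]

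For $X_{j,\eta}^{i,N,h}$: apply the triangle inequality in $\mathscr{L}^p(\Omega)$ to the three terms on the right-hand side of \eqref{eq:euler}. The drift contribution is estimated by Remark \ref{rem:linear}, which combined with the previous display controls it in terms of $\|X_{j-1}^{\cdot,N,h}\|_{\mathscr{L}^p(\Omega)}$. For the Wiener integrals, I use independence of $\eta_j$ from $(W^i,W^0,\tilde{\mathscr{F}}_{t_{j-1}})$: conditioning on $\eta_j=\eta$ and on $\mathscr{F}_{t_{j-1}}^h$, the integrand is deterministic and the integral reduces to $W_{t_{j-1}+\eta h_j}^{i,\ell}-W_{t_{j-1}}^{i,\ell}\sim \mathcal{N}(0,\eta h_j)$. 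Hence, by Burkholder--Davis--Gundy (or direct Gaussian moments) and then integrating over $\eta\in[0,1]$,
\[
\Bigl\|\sigma_1^\ell(t_{j-1},X_{j-1}^{i,N,h},\mu_{j-1}^{X,N,h})\int_{t_{j-1}}^{t_{j-1}+\eta_j h_j}dW_s^{i,\ell}\Bigr\|_{\mathscr{L}^p(\Omega)}\leq C_p\, h_j^{1/2}\,\bigl\|\sigma_1^\ell(t_{j-1},\ldots)\bigr\|_{\mathscr{L}^p(\Omega)},
\]
and similarly for the $\sigma_0$-term. Linear growth (Remark \ref{rem:linear}) closes this estimate and yields $X_{j,\eta}^{i,N,h}\in\mathscr{L}^p(\Omega)$.

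For $X_j^{i,N,h}$: decompose \eqref{eq:scheme} and treat the four contributions separately. The drift term $h_j b(t_{j-1}+\eta_j h_j,X_{j,\eta}^{i,N,h},\mu_{j,\eta}^{X,N,h})$ is handled by Remark \ref{rem:linear} together with the just-established $\mathscr{L}^p$-bound for $X_{j,\eta}^{\cdot,N,h}$ (and the corresponding empirical-measure bound as above). For the stochastic integrals involving $\tilde\sigma_1$ and $\tilde\sigma_0$ as defined in \eqref{eq:sigma_tilde}, I split each $\tilde\sigma$ into the base $\sigma$-term plus the two $\Lambda$-terms defined in \eqref{eq:lamb}--\eqref{eq:lamb*}. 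The base term is controlled by BDG and linear growth exactly as in the Euler step (no $\eta_j$ appears here). The $\Lambda$-terms are iterated stochastic integrals in which the outer integrand involves $\partial_x\sigma_u^\ell \cdot \sigma_v^{\ell_1}$ or $\partial_\mu\sigma_u^\ell\cdot\sigma_v^{\ell_1}$; these are bounded in $\mathscr{L}^p(\Omega)$ by Remark \ref{rem:derv_bound} (giving $|\partial_x\sigma|,|\partial_\mu\sigma|\leq L$) combined with the linear growth of $\sigma$, and a double application of BDG yields an $\mathcal{O}(h_j)$ contribution. Summing, $X_j^{i,N,h}\in\mathscr{L}^p(\Omega)$.

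The main technical obstacle is the careful handling of the random endpoint $t_{j-1}+\eta_j h_j$ in the Euler step: one must exploit the independence of $\eta_j$ from the Brownian filtration to keep the argument of the stochastic integral deterministic after conditioning, so that BDG (or Gaussian moment bounds) can be applied cleanly before integrating out $\eta_j$. Everything else is a routine application of Minkowski, BDG, linear growth, and the empirical-measure-to-second-moment estimate. Note also that the result is stated for a single step and uses no induction on $j$; the full moment bound uniform in $j$ is deferred to Lemma \ref{lem:mb}.
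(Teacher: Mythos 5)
Your proposal follows essentially the same route as the paper's own proof: establish the empirical-measure-to-moment estimate, apply linear growth (Remark~\ref{rem:linear}) and a BDG-type martingale moment inequality to bound $X_{j,\eta}^{i,N,h}$ from \eqref{eq:euler}, then combine this with the linear growth of $b$ and the boundedness of $\partial_x\sigma,\partial_\mu\sigma$ (Remark~\ref{rem:derv_bound}) to control each piece of $\tilde\sigma_u$ in \eqref{eq:sigma_tilde} and conclude $X_j^{i,N,h}\in\mathscr{L}^p(\Omega)$. The only cosmetic difference is that you spell out the conditioning on $\eta_j$ to justify the random-endpoint stochastic integral, whereas the paper simply applies the martingale moment inequality (Theorem 7.1 of Mao) and absorbs $\eta_j h_j\le h_j$ directly; both land on the same $h_j^{1/2}$ bound.
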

			\begin{proof}
				Notice that for every $i \in \{1,\ldots,N\}$, $X_{j-1}^{i, N,h}$ is $\mathscr F_{j-1}^h$-measurable  and Assumption  H--\ref{asump:lip} gives the continuity of $b$ and $\sigma_u$ for $u \in \{0,1\}$, which in turn implies that  $X_{j,\eta}^{i,N,h}$ defined in  \eqref{eq:euler} is  $\mathscr F_j^h$-measurable for  any $j\in \{1,\ldots,n_h\}$. 
				Also,  continuity of  $\partial_x\sigma_u^{\ell}$ and $\partial_\mu\sigma_u^{\ell}$ for $u\in \{0,1\}$ and $\ell\in\{1,2,\ldots,m_u\}$ implies $X_{j}^{i, N, h}$ in \eqref{eq:scheme}  is  $\mathscr F_j^h$-measurable for  any $j\in \{1,\ldots,n_h\}$  and $i\in\{1,\ldots,N\}$.
				
				As $X_{j-1}^{i, N,h}\in  \mathscr{L}^{p}(\Omega) $, we have from Remark \ref{rem:linear} and Minkowski's inequality that
				\begin{align} \label{eq:best}
					\big\|b(t_{j-1},X_{j-1}^{i, N,h}, \mu_{j-1}^{X, N,h})\big\|_{\mathscr{L}^{p}(\Omega)} 
					&\leq \bar{L} \big\{1+ \big\|X_{j-1}^{i, N,h}\big\|_{\mathscr{L}^{p}(\Omega)}+\big\|\mathcal{W}_2(\mu_{j-1}^{X,N,h},\delta_0)\big\|_{\mathscr{L}^{p}(\Omega)}\big\}\notag
					\\
					&\leq  \bar{L} \big\{1+ 2\max_{i\in\{1,\ldots,N\}} \big\|X_{j-1}^{i, N,h}\big\|_{\mathscr{L}^{p}(\Omega)}\big\} <\infty,
				\end{align}
				where the last inequality is obtained by using 
				\begin{align*}
					\big\|\mathcal{W}_2(\mu_{j-1}^{X,N,h},\delta_0)\big\|_{\mathscr{L}^{p}(\Omega)} \leq \frac{1}{N} \sum_{i=1}^N \big\|X_{j-1}^{i, N,h}\big\|_{\mathscr{L}^{p}(\Omega)} \leq \max_{i\in\{1,\ldots,N\}} \big\|X_{j-1}^{i, N,h}\big\|_{\mathscr{L}^{p}(\Omega)}.
				\end{align*}
			Similarly, we get for  $u\in\{0,1\}$, 
				\begin{align} \label{eq:sest}
					\big\|\sigma_u(t_{j-1},X_{j-1}^{i, N,h}, \mu_{j-1}^{X, N,h})\big\|_{\mathscr{L}^{p}(\Omega)} \leq  \bar{L} \big\{1+ 2\max_{i\in\{1,\ldots,N\}} \big\|X_{j-1}^{i, N,h}\big\|_{\mathscr{L}^{p}(\Omega)}\big\} <\infty,
				\end{align}
				which  along with \eqref{eq:euler} further implies
				\begin{align}
					\big\|X_{j,\eta}^{i, N, h}\big\|_{\mathscr{L}^{p}(\Omega)}\leq &\big\|X_{j-1}^{i, N,h}\big\|_{\mathscr{L}^{p}(\Omega)}+h_j\big\|b(t_{j-1}, X_{j-1}^{i, N,h}, \mu_{j-1}^{X, N,h})\big\|_{\mathscr{L}^{p}(\Omega)}\notag
					\\
					&+h_j^{1/2} \Big(\frac{p(p-1)}{2}\Big)^{1/2}\sum_{u=0}^{1}\big\|\sigma_u(t_{j-1}, X_{j-1}^{i, N,h}, \mu_{j-1}^{X, N,h})\big\|_{\mathscr{L}^{p}(\Omega)} <\infty, \notag
				\end{align}
				for any $j\in \{1,\ldots,n_h\}$  and $i\in\{1,\ldots,N\}$. 
				Thus, on using Remark \ref{rem:linear}, one also obtains
				\begin{align}\label{eq:estimate_b}
					\big\|b(t_{j-1}+\eta_jh_j, X_{j,\eta}^{i, N, h},\mu_{j, \eta}^{X, N, h})\big\|_{\mathscr{L}^{p}(\Omega)}
					&\leq \bar{L} \big\{1+\big\|X_{j,\eta}^{i, N, h}\big\|_{\mathscr{L}^{p}(\Omega)}+\big\|\mathcal{W}_2(\mu_{j, \eta}^{X, N, h},\delta_0)\big\|_{\mathscr{L}^{p}(\Omega)}\big\}\notag
					\\
					&\leq \bar{L}\big\{1+2\max_{i\in\{1,\ldots,N\}}\big\|X_{j,\eta}^{i, N, h}\big\|_{\mathscr{L}^{p}(\Omega)}\big\}<\infty,
				\end{align}
				for all $j\in \{1,\ldots,n_h\}$  and $i\in\{1,\ldots,N\}$. 
				Moreover, recall \eqref{eq:lamb} and use  Remarks \ref{rem:linear} and \ref{rem:derv_bound} along with \eqref{eq:best} and \eqref{eq:sest} to obtain the following, 
				\begin{align} 
					&\sum_{\ell=1}^{m_u}\big\|\Lambda_{\sigma_u\sigma_1}^{\ell}(s,t_{j-1}, X_{j-1}^{i, N,h}, \mu_{j-1}^{X, N,h})\big\|_{\mathscr{L}^{p}(\Omega)}\notag
					\\
					&\leq h_j^{1/2}\Big(\frac{p( p-1)}{2}\Big)^{1/2}\sum_{\ell=1}^{m_u}\sum_{\ell_1=1}^{m_1}\big\|\partial_{x}\sigma_u^{\ell}(t_{j-1}, X_{j-1}^{i, N, h}, \mu_{j-1}^{X, N, h})\sigma_1^{\ell_1}(t_{j-1}, X_{j-1}^{i, N, h}, \mu_{j-1}^{X, N, h})\big\|_{\mathscr{L}^{p}(\Omega)}\notag
					\\
					& +h_j^{1/2}\Big(\frac{ p( p-1)}{2}\Big)^{1/2}\sum_{\ell=1}^{m_u} \frac{1}{N}\sum_{k=1}^N\sum_{\ell_1=1}^{m_1}\big\|\partial_{\mu}\sigma_u^{\ell}(t_{j-1}, X_{j-1}^{i, N, h}, \mu_{j-1}^{X, N, h}, X_{j-1}^{k, N, h}) \notag 
					\\
					& \qquad\qquad\sigma_1^{\ell_1}(t_{j-1}, X_{j-1}^{k, N, h}, \mu_{j-1}^{X, N, h})\big\|_{\mathscr{L}^{p}(\Omega)}\notag
					\\
					&\leq 2h_j^{1/2}\Big(\frac{ p( p-1)}{2}\Big)^{1/2}m_u m_1\bar L^2 \big\{1+2\max_{i\in\{1,\ldots,N\}}\big\|X_{j-1}^{i, N,h}\big\|_{\mathscr{L}^{p}(\Omega)}\big\}<\infty, \label{eq:nn2}
				\end{align}	
				for all $u \in \{0,1\}$, $s\in [t_{j-1}, t_j]$, $i \in \{1,\ldots, N\}$ and  $j\in \{1,\ldots,n_h\}$.  		
				Similarly, 
				\begin{align} 
					&\sum_{\ell=1}^{m_u}\big\|\Lambda_{\sigma_u\sigma_0}^{\ell}(s,t_{j-1}, X_{j-1}^{i, N,h}, \mu_{j-1}^{X, N,h})\big\|_{\mathscr{L}^{p}(\Omega)}  \notag
					\\
					&\leq 2h_j^{1/2}\Big(\frac{ p( p-1)}{2}\Big)^{1/2}m_u m_0 \bar L^2 \big\{1+2\max_{i\in\{1,\ldots,N\}}\big\|X_{j-1}^{i, N,h}\big\|_{\mathscr{L}^{p}(\Omega)}\big\}<\infty , \label{eq:nn1}
				\end{align}
				for all $u \in \{0,1\}$, $s\in [t_{j-1}, t_j]$, $i \in \{1,\ldots, N\}$ and  $j\in \{1,\ldots,n_h\}$.

				Recalling the expressions of $\tilde{\sigma}_1$ and $\tilde{\sigma}_0$ from \eqref{eq:sigma_tilde} and then applying Remark \ref{rem:linear} along with Equations \eqref{eq:nn1} and \eqref{eq:nn2}, we have 	
				\begin{align}
					&\sum_{u=0}^1\sum_{\ell=1}^{m_u}\big\|\tilde{\sigma}_u^{\ell}(s,t_{j-1},X_{j-1}^{i, N,h}, \mu_{j-1}^{X, N,h})\big\|_{\mathscr{L}^{p}(\Omega)} \notag
					\\
					&\leq \sum_{u=0}^1\sum_{\ell=1}^{m_u}\big\|\sigma^\ell_u(t_{j-1},X_{j-1}^{i, N,h}, \mu_{j-1}^{X, N,h})\big\|_{\mathscr{L}^{p}(\Omega)}+\sum_{u=0}^1\sum_{\ell=1}^{m_1}\big\|\Lambda^\ell_{\sigma_1 \sigma_u}(s,t_{j-1},X_{j-1}^{i, N,h}, \mu_{j-1}^{X, N,h})\big\|_{\mathscr{L}^{p}(\Omega)} \notag
					\\
					&\hspace{1cm}+\sum_{u=0}^1\sum_{\ell=1}^{m_0}\big\|\Lambda^\ell_{\sigma_0 \sigma_u}(s,t_{j-1},X_{j-1}^{i, N,h}, \mu_{j-1}^{X, N,h})\big\|_{\mathscr{L}^{p}(\Omega)} \notag
					\\
					&\leq (\bar L(m_0+m_1)+2h_j^{1/2}\Big(\frac{ p( p-1)}{2}\Big)^{1/2}(m_0+m_1)^2 \bar L^2) \big\{1+2\max_{i\in\{1,\ldots,N\}}\big\|X_{j-1}^{i, N,h}\big\|_{\mathscr{L}^{p}(\Omega)}\big\}<\infty, \label{eq:mmk}
				\end{align}	
				for all $i\in\{1,\ldots, N\}$, $s\in [t_{j-1}, t_j]$, $i \in \{1, \ldots, N\}$ and $j\in \{1,\ldots,n_h\}$. 
				Thus, by \eqref{eq:scheme} and Theorem 7.1 in \cite{Mao2008}, 
				\begin{align}
					\big\| X_{j}^{i, N, h}\big\|_{ \mathscr{L}^{p}(\Omega)} \leq  & \big\| X_{j-1}^{i, N, h}\big\|_{ \mathscr{L}^{p}(\Omega)}+h_j\big\|b(t_{j-1}+\eta_jh_j, X_{j, \eta}^{i, N, h}, \mu_{j, \eta}^{X, N, h} )\big\|_{\mathscr{L}^{p}(\Omega)}\notag
					\\
					&+h_j^{1/2}  \Big(\frac{p(p-1)}{2}\Big)^{1/2} \sum_{u=0}^1\sum_{\ell=1}^{m_u} \Big(\int_{t_{j-1}}^{t_j}\big\|\tilde{\sigma}_u^{\ell}(s,t_{j-1},X_{j-1}^{i, N,h}, \mu_{j-1}^{X, N,h})\big\|_{\mathscr{L}^{p}(\Omega)}^{p}ds\Big)^{1/p}, \notag
				\end{align}
				for all $i\in\{1,\ldots, N\}$ and $j\in \{1,\ldots,n_h\}$,	which on using \eqref{eq:estimate_b} and \eqref{eq:mmk} completes the proof. 
			\end{proof}
			As a consequence of the above lemma, we obtain the following corollary. 
			\begin{cor} 
				Let Assumptions \mbox{\normalfont{ H--\ref{asump:initial_cond}}} with $\bar{p}\geq 2$, \mbox{\normalfont{H--\ref{asump:lip}}}  and \mbox{\normalfont{H--\ref{asump:holder}}}  hold.  
				Then, $X_{j}^{i, N,h}\in \mathscr{L}^{\bar p}(\Omega)$ for all $i\in\{1,\ldots, N\}$ and $j\in \{1,\ldots,n_h\}$.
			\end{cor}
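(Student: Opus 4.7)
The plan is to prove the corollary by a straightforward induction on the time index $j \in \{0, 1, \ldots, n_h\}$, invoking Lemma \ref{lem:Gamma_bound} at each step with $p = \bar p$.

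For the base case $j = 0$, I would note that the scheme is initialised by $X_0^{i,N,h} = X_0^i$, and Assumption \mbox{\normalfont H--\ref{asump:initial_cond}} together with the i.i.d.\ construction of the particles' initial values ensures $X_0^i \in \mathscr{L}^{\bar p}(\tilde\Omega) \subset \mathscr{L}^{\bar p}(\Omega)$ for every $i \in \{1,\ldots,N\}$.

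For the inductive step, assume $X_{j-1}^{i,N,h} \in \mathscr{L}^{\bar p}(\Omega)$ for every $i \in \{1,\ldots,N\}$. A direct application of Lemma \ref{lem:Gamma_bound} with $p = \bar p$ (permissible since $\bar p \geq 2$ by hypothesis) then yields $X_{j,\eta}^{i,N,h} \in \mathscr{L}^{\bar p}(\Omega)$ and $X_{j}^{i,N,h} \in \mathscr{L}^{\bar p}(\Omega)$ for every $i \in \{1,\ldots,N\}$, which closes the induction at level $j$ and propagates the integrability to $j=n_h$.

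There is essentially no technical obstacle here: the conclusion of Lemma \ref{lem:Gamma_bound} at step $j$ matches its hypothesis at step $j+1$, so the induction closes automatically, and the only compatibility condition to check is that the integrability exponent $\bar p$ from Assumption \mbox{\normalfont H--\ref{asump:initial_cond}} satisfies $\bar p \geq 2$, which is precisely what the corollary assumes and what Lemma \ref{lem:Gamma_bound} requires. No quantitative moment bounds (independent of $N$ or $h$) are asserted by the corollary, so the argument is purely qualitative and does not need any Gronwall-type stability estimate at this stage; such bounds will instead be addressed in the subsequent moment-stability lemma.
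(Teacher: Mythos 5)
Your proposal is correct and follows essentially the same induction-on-$j$ argument as the paper, with the base case supplied by Assumption H--\ref{asump:initial_cond} and the inductive step by Lemma \ref{lem:Gamma_bound} applied with $p = \bar p$.
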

			\begin{proof}
				First, let us recall the randomised Milstein scheme given in \eqref{eq:scheme}. 
				For $j=1$, the result holds due to   Assumption \mbox{\normalfont{H--\ref{asump:initial_cond}}} and Lemma \ref{lem:Gamma_bound}. 
				Assume now that the result is true for $j=k$ for some $k\in\{1,\ldots,n_h\}$, i.e., $X_k^{i,N,h}\in  \mathscr{L}^{\bar p}(\Omega)$ for all  $i\in\{1,\ldots, N\}$. Then,  Lemma \ref{lem:Gamma_bound} yields  $X_{k+1}^{i,N,h}\in  \mathscr{L}^{\bar p}(\Omega)$. An inductive argument complete the proof. 
			\end{proof} 
			The following lemma gives the moment bound of the randomised Milstein scheme \eqref{eq:scheme}. 
			\begin{lem} \label{lem:mb}
				Let Assumptions \mbox{\normalfont{ H--\ref{asump:initial_cond}}} with $\bar{p}\geq 2$,   \mbox{\normalfont{ H--\ref{asump:lip}}} and  \mbox{\normalfont{H--\ref{asump:holder}}}  hold. Then, for any time grid $\varrho_h$  \eqref{eq:varho-timeGrid} with $h\leq \min(1,T)$,
				\begin{align*}
				\sup_{i \in \{1,\ldots,N\}} 	\big\|\max_{j\in\{1,\ldots,n_h\}}\big|X_{j}^{i, N, h}\big|\big\|_{\mathscr{L}^{\bar p}(\Omega)}\leq C_5 (1+\sup_{i \in \{1,\ldots,N\}}  \big\|X_{0}^{i}\big\|_{\mathscr{L}^{\bar p}(\Omega)}),
				\end{align*}
				where  \[C_5:=\max\{2, 2\bar LT+4\bar L^2T+8\bar{L}^2T \Big(\frac{p(p-1)}{2}\Big)^{1/2}+2C_0\sqrt T\} e^{4T(\bar L+2\bar L^2+4\bar{L}^2(\frac{\bar{p}(\bar{p}-1)}{2})^{1/2}+C_0^2)}\] with $C_0:= \bar{C}_{\bar p} \Big(\frac{\bar p(\bar p-1)}{2}\Big)^{1/2}(\bar L(m_0+m_1)+2h_j^{1/2} \Big(\frac{\bar p(\bar p-1)}{2}\Big)^{1/2}(m_0+m_1)^2 \bar L^2)$ and $\bar{C}_{\bar{p}}$ defined in Lemma \ref{lem:Burkholder}. 
			\end{lem}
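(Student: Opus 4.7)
My plan is to iterate the scheme \eqref{eq:scheme} from $0$ to some index $k \in \{1,\ldots,n_h\}$, take the $\mathscr{L}^{\bar p}$-norm of the running maximum, and close the estimate via a discrete Gr\"onwall argument. Concretely, for each fixed $i \in \{1,\ldots,N\}$ I would write
\[
X_{k}^{i, N, h} = X_0^{i} + \sum_{j=1}^{k} h_j b(t_{j-1}+\eta_j h_j, X_{j,\eta}^{i, N, h},\mu_{j, \eta}^{X, N, h}) + \sum_{u=0}^1 \sum_{\ell=1}^{m_u} \sum_{j=1}^{k} \int_{t_{j-1}}^{t_j} \tilde{\sigma}_u^{\ell}(s,t_{j-1},X_{j-1}^{i,N,h},\mu_{j-1}^{X,N,h})\, dW^{\star,\ell}_s,
\]
apply Minkowski's inequality and, critically, the Burkholder--Davis--Gundy (Lemma \ref{lem:Burkholder}) inequality on the martingale parts. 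Then I take $\max_{i\in\{1,\ldots,N\}}$ on both sides. The crucial observation for $N$-independence is that $\mathcal{W}_2(\mu_{j-1}^{X,N,h},\delta_0) \leq \max_{i}|X_{j-1}^{i,N,h}|$ (and similarly for $\mu_{j,\eta}^{X,N,h}$), so the empirical measure terms, after taking $\max_i$, reduce to the same quantity being estimated, with no $N$-blow-up.

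Next, I would handle the drift term, which depends on the Euler predictor $X_{j,\eta}^{i,N,h}$. Using \eqref{eq:euler} together with Remark \ref{rem:linear} and BDG, one controls $\|X_{j,\eta}^{i,N,h}\|_{\mathscr{L}^{\bar p}(\Omega)}$ linearly by $\|X_{j-1}^{i,N,h}\|_{\mathscr{L}^{\bar p}(\Omega)}$ (and hence by its running maximum) up to an additive constant. Feeding this into the linear-growth bound for $b(t_{j-1}+\eta_j h_j, X_{j,\eta}^{i,N,h},\mu_{j,\eta}^{X,N,h})$ yields a contribution proportional to $h \sum_{j=1}^{k} \max_{i} \big\|\max_{r \leq j-1}|X_{r}^{i,N,h}|\big\|_{\mathscr{L}^{\bar p}(\Omega)}$ after taking max in $i$.

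For the diffusion, I would expand $\tilde\sigma_u^{\ell}$ via \eqref{eq:sigma_tilde}, so each stochastic integral splits into the bare $\sigma_u^{\ell}$ term plus the Milstein corrections $\Lambda^{\ell}_{\sigma_u\sigma_1}$ and $\Lambda^{\ell}_{\sigma_u\sigma_0}$. The $\sigma_u^{\ell}$-parts are handled by Remark \ref{rem:linear}; the $\Lambda$-parts are double Itô integrals, and applying BDG twice, together with the boundedness of $\partial_x\sigma_u$ and $\partial_\mu\sigma_u$ (Remark \ref{rem:derv_bound}) and the linear growth of $\sigma_u$, produces bounds of the same linear form in $\max_i |X_{j-1}^{i,N,h}|$ with an extra factor of order $h_j^{1/2}$ (this is where the analogous $\mathscr{L}^{\bar p}$-bounds already derived in \eqref{eq:nn2}--\eqref{eq:mmk} in the proof of Lemma \ref{lem:Gamma_bound} are the workhorse). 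Again the empirical-measure terms collapse after taking $\max_i$.

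Assembling everything, one obtains an inequality of the form
\[
\big\|\max_{j\le k}|X_j^{i,N,h}|\big\|_{\mathscr{L}^{\bar p}(\Omega)} \leq A + B \sum_{j=1}^{k} h_j\, \max_{i}\big\|\max_{r\le j-1}|X_r^{i,N,h}|\big\|_{\mathscr{L}^{\bar p}(\Omega)},
\]
with $A, B$ explicit in $\bar L, T, m_0, m_1, \bar p$ but \emph{independent of $N$ and $h$}. Taking $\max_i$ and applying the discrete Gr\"onwall inequality yields the stated bound with the exponential constant $C_5$. The main obstacle in executing this plan is bookkeeping of the two interlocking recursions (the predictor $X_{j,\eta}$ inside the corrector $X_j$, and the $\Lambda$-terms which themselves depend on all particles through $\mu^{X,N,h}_{j-1}$): one has to be careful that at every step the $\max_i$ operation is applied \emph{before} invoking Gr\"onwall, otherwise the empirical-measure dependence produces pseudo-$N$ factors which would destroy the sought-after $N$-free estimate.
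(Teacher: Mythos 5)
Your plan matches the paper's proof essentially step-for-step: iterate the scheme, apply Minkowski and the discrete BDG inequality (Lemma~\ref{lem:Burkholder}), exploit the bound $\mathcal{W}_2(\mu^{X,N,h}_{j-1},\delta_0)\le\max_i|X^{i,N,h}_{j-1}|$ (in $\mathscr L^{\bar p}$) for $N$-uniformity, control the predictor via the estimate \eqref{eq:xeta}, reuse the $\tilde\sigma$-bound from the proof of Lemma~\ref{lem:Gamma_bound}, and close with discrete Gr\"onwall. The one small step your outline glosses over is that after BDG the martingale contribution reads $C_0\big(\sum_{j\le\bar k}h_j\{1+2\max_i\|X_{j-1}^{i,N,h}\|_{\mathscr L^{\bar p}(\Omega)}\}^2\big)^{1/2}$ rather than a linear sum, so reaching the linear recursion you write down requires a Young-inequality absorption step (pulling out $\max_i\|\max_{\bar n\le\bar k}|X_{\bar n}^{i,N,h}|\|_{\mathscr L^{\bar p}(\Omega)}^{1/2}$ and moving half of it to the left-hand side), which the paper carries out explicitly.
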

			
			\begin{proof} 
				Recall \eqref{eq:euler} and use Minkowski's inequality,  Remark \ref{rem:linear} and $h\leq 1$ to obtain the following,    
				\begin{align}
					\big\|&X_{j,\eta}^{i,N,h}\big\|_{\mathscr{L}^{\bar p}(\Omega)}\leq  \big\|X_{j-1}^{i,N,h}\big\|_{\mathscr{L}^{\bar p}(\Omega)}+h_j \big\| \eta_j b(t_{j-1}, X_{j-1}^{i,N,h}, \mu_{j-1}^{X,N,h})\big\|_{\mathscr{L}^{\bar p}(\Omega)}\notag
					\\
					&+h_j^{1/2}  \Big(\frac{p(p-1)}{2}\Big)^{1/2} \sum_{u=0}^1 \big\|\sigma_u(t_{j-1}, X_{j-1}^{i,N,h}, \mu_{j-1}^{X,N,h})\big\|_{\mathscr{L}^{\bar p}(\Omega)}
					\notag
					\\
					\leq & \big\|X_{j-1}^{i,N,h}\big\|_{\mathscr{L}^{\bar p}(\Omega)} 
					\notag
					\\ 
					&
					+ (\bar{L}+2\bar{L} \Big(\frac{p(p-1)}{2}\Big)^{1/2} \big(1+\big\|X_{j-1}^{i,N,h}\big\|_{\mathscr{L}^{\bar p}(\Omega)}+\big\|\mathcal W_2(\mu_{j-1}^{X,N,h}, \delta_0)\big\|_{\mathscr{L}^{\bar p}(\Omega)}\big)	 \notag
					\\
					\leq & \bar{L}+2\bar{L} \Big(\frac{p(p-1)}{2}\Big)^{1/2}  + (1+2\bar{L}+ 4\bar{L} \Big(\frac{p(p-1)}{2}\Big)^{1/2} )\max_{i\in\{1,\ldots,N\}}\big\|X_{j-1}^{i,N,h}\big\|_{\mathscr{L}^{\bar p}(\Omega)} , \label{eq:xeta}
				\end{align}
				for all $i\in\{1,\ldots, N\}$, $N\in\mathbb N$ and $j \in \{1, \ldots, n_h\}$.
				Due to  \eqref{eq:scheme} and Minkowski's inequality,  for any   $\bar k \in \{1,\ldots, n_h\}$ and $i\in \{1,\ldots,N\}$, 	
				\begin{align}
					\big\|\max_{\bar n\in\{0,\ldots, \bar k\}} & \big|X_{\bar n}^{i,N,h} \big|\big\|_{\mathscr{L}^{\bar p}(\Omega)}  \leq  \big\|X_{0}^{i}\big\|_{\mathscr{L}^{\bar p}(\Omega)}  \notag
					\\
					& +  \big\| \max_{\bar n \in \{1,\ldots,\bar k\}} \big| \sum_{j=1}^{\bar n}  h_j b(t_{j-1} + \eta_j h_j, X_{j,\eta}^{i,N,h},\mu_{j,\eta}^{X,N,h})\big| \big\|_{\mathscr{L}^{\bar p}(\Omega)}  \notag
					\\
					&+ \big\| \max_{\bar n \in \{1,\ldots,\bar k\}} \big| \sum_{j=1}^{\bar n}\int_{t_{j-1}}^{t_{j}}\tilde\sigma_1(s,t_{j-1}, X_{j-1}^{i,N,h},  \mu_{j-1}^{X,N,h})dW^{i}_s|\|_{\mathscr{L}^{\bar p}(\Omega)} \notag
					\\
					&+\big\| \max_{\bar n \in \{1,\ldots,\bar k\}} \big| \sum_{j=1}^{\bar n}\int_{t_{j-1}}^{t_{j}}\tilde\sigma_0(s,t_{j-1}, X_{j-1}^{i,N,h},\mu_{j-1}^{X,N,h})dW_s^{0}  \big|\big\|_{\mathscr{L}^{\bar p}(\Omega)}, \notag
				\end{align}			 		
				which on application of Lemma \ref{lem:Burkholder}	and Remark \ref{rem:linear} yields
				\begin{align}
					\big\|\max_{\bar n\in\{0,\ldots, \bar k\}} & \big|X_{\bar n}^{i,N,h} \big|\big\|_{\mathscr{L}^{\bar p}(\Omega)}  \leq  \big\|X_{0}^{i}\big\|_{\mathscr{L}^{\bar p}(\Omega)}   +   \sum_{j=1}^{\bar k}  h_j \big\| b(t_{j-1} + \eta_j h_j, X_{j,\eta}^{i,N,h},\mu_{j,\eta}^{X,N,h}) \big\|_{\mathscr{L}^{\bar p}(\Omega)}  \notag
					\\
					&+ \bar{C}_{\bar p} \big\|  \big(\sum_{j=1}^{\bar{k}} \big| \int_{t_{j-1}}^{t_{j}}\tilde\sigma_1(s,t_{j-1}, X_{j-1}^{i,N,h},  \mu_{j-1}^{X,N,h})dW^{i}_s \big|^2\big)^{1/2} \big\|_{\mathscr{L}^{\bar p}(\Omega)} \notag
					\\
					&+  \bar{C}_{\bar p} \big\|  \big( \sum_{j=1}^{\bar k} \big|\int_{t_{j-1}}^{t_{j}}\tilde\sigma_0(s,t_{j-1}, X_{j-1}^{i,N,h},\mu_{j-1}^{X,N,h})dW_s^{0}  \big|^2 \big)^{1/2}\big\|_{\mathscr{L}^{\bar p}(\Omega)} \notag
					\\
					\leq  & \big\|X_{0}^{i}\big\|_{\mathscr{L}^{\bar p}(\Omega)}   +   \sum_{j=1}^{\bar k}  h_j \bar L \big\{1+\big\|X_{j,\eta}^{i,N,h}\big\|_{\mathscr{L}^{\bar p}(\Omega)}+\big\|\mathcal W_2(\mu_{j,\eta}^{X,N,h},\delta_0)\big\|_{\mathscr{L}^{\bar p}(\Omega)}\big\} \notag	
					\\
					&+ \bar{C}_{\bar p}  \big\|\sum_{j=1}^{\bar{k}} |\int_{t_{j-1}}^{t_{j}}\tilde\sigma_1(s,t_{j-1}, X_{j-1}^{i,N,h},  \mu_{j-1}^{X,N,h})dW^{i}_s|^2  \big\|_{\mathscr{L}^{\bar p/2}(\Omega)}^{1/2} \notag
					\\
					&+  \bar{C}_{\bar p}  \big\|\sum_{j=1}^{\bar k}  |\int_{t_{j-1}}^{t_{j}}\tilde\sigma_0(s,t_{j-1}, X_{j-1}^{i,N,h},\mu_{j-1}^{X,N,h})dW_s^{0}|^2   \big\|_{\mathscr{L}^{\bar p/2}(\Omega)}^{1/2} \notag	
					\\
					\leq & \big\|X_{0}^{i}\big\|_{\mathscr{L}^{\bar p}(\Omega)}  +   \sum_{j=1}^{\bar k}  h_j \bar{L} \big\{1+2\max_{i\in\{1,\ldots,N\}}\big\|X_{j,\eta}^{i,N,h}\big\|_{\mathscr{L}^{\bar p}(\Omega)}\big\} \notag
					\\
					&+ \bar{C}_{\bar p}  \Big(\sum_{j=1}^{\bar{k}} \big\|\int_{t_{j-1}}^{t_{j}}\tilde\sigma_1(s,t_{j-1}, X_{j-1}^{i,N,h},  \mu_{j-1}^{X,N,h})dW^{i}_s \big\|_{\mathscr{L}^{\bar p}(\Omega)}^2\Big)^{1/2} \notag
					\\
					&+  \bar{C}_{\bar p}  \Big(\sum_{j=1}^{\bar k}  \big\|\int_{t_{j-1}}^{t_{j}}\tilde\sigma_0(s,t_{j-1}, X_{j-1}^{i,N,h},\mu_{j-1}^{X,N,h})dW_s^{0}  \big\|_{\mathscr{L}^{\bar p}(\Omega)}^2\Big)^{1/2}, \notag	
				\end{align}
				for any $i \in \{1, \ldots, N\}$ and $\bar{k} \in \{1, \ldots, n_h\}$. 
				Furthermore, the application of \eqref{eq:xeta} and Theorem 7.1 in \cite{Mao2008}   give
				\begin{align}
					\big\|&\max_{\bar n\in\{0,\ldots, \bar k\}}  \big|X_{\bar n}^{i,N,h} \big|\big\|_{\mathscr{L}^{\bar p}(\Omega)} \leq   \big\|X_{0}^{i}\big\|_{\mathscr{L}^{\bar p}(\Omega)}   \notag
					\\
					& +   \sum_{j=1}^{\bar k}  h_j \bar L \big\{1+2\bar{L}+4\bar{L}\Big(\frac{\bar{p}(\bar{p}-1)}{2}\Big)^{1/2}   \notag
					\\
					& + 2(1+2\bar{L}+4\bar{L}\Big(\frac{\bar{p}(\bar{p}-1)}{2}\Big)^{1/2} ) \max_{i\in\{1,\ldots,N\}}\big\|X_{j-1}^{i,N,h}\big\|_{\mathscr{L}^{\bar p}(\Omega)}\big\}  \notag
					\\
					&+  \bar{C}_{\bar p} \Big(\frac{\bar p(\bar p-1)}{2}\Big)^{1/2}  \sum_{u=0}^{1}   \Big(\sum_{j=1}^{\bar k} h_{j}^{(\bar{p}-2)/\bar{p}} \Big( \int_{t_{j-1}}^{t_{j}} \big\|\tilde\sigma_u(s,t_{j-1}, X_{j-1}^{i,N,h},\mu_{j-1}^{X,N,h})   \big\|_{\mathscr{L}^{\bar p}(\Omega)}^{\bar{p}} ds\Big)^{2/\bar{p}}\Big)^{1/2} \label{eq:kkk}
				\end{align}
				for any $\bar{k}\in\{1,\ldots,n_h\}$ and  $i\in \{1,\ldots, N\}$. 
				
				Also, by adapting an  argument similar to the one used in \eqref{eq:mmk}, we have
				\begin{align*}
					&\sum_{u=0}^1\big\|\tilde\sigma_u(s,t_{j-1}, X_{j-1}^{i,N,h},\mu_{j-1}^{X,N,h}) \big\|_{\mathscr{L}^{\bar p}(\Omega)}
					\\
					&\leq 	(\bar L(m_0+m_1)+2h_j^{1/2} \Big(\frac{\bar p(\bar p-1)}{2}\Big)^{1/2}(m_0+m_1)^2 \bar L^2) \big\{1+2\max_{i\in\{1,\ldots,N\}}\big\|X_{j-1}^{i, N,h}\big\|_{\mathscr{L}^{\bar p}(\Omega)}\big\},
				\end{align*}
				for any $s\in [t_{j-1}, t_j]$,   $i\in \{1,\ldots, N\}$  and $j\in\{1,\ldots,n_h\}$.
				On substituting the above  in Equation \eqref{eq:kkk}, one obtains	
				\begin{align*}
				 \big\|\max_{\bar n\in\{0,\ldots, \bar k\}} & \big|X_{\bar n}^{i,N,h} \big|\big\|_{\mathscr{L}^{\bar p}(\Omega)} \leq  \big\|X_{0}^{i}\big\|_{\mathscr{L}^{\bar p}(\Omega)}  
				 \\
				 & +   \bar L(1+2\bar L+4 \bar{L} \Big(\frac{\bar{p}(\bar{p}-1)}{2}\Big)^{1/2})\sum_{j=1}^{\bar k}  h_j  \big\{1+2\max_{i\in\{1,\ldots,N\}}\big\|X_{j-1}^{i,N,h}\big\|_{\mathscr{L}^{\bar p}(\Omega)}\big\} 
					\\ 
					& +C_0\Big(\sum_{j=1}^{\bar k}  h_j  \big\{1+2\max_{i\in\{1,\ldots,N\}}\big\|X_{j-1}^{i,N,h}\big\|_{\mathscr{L}^{\bar p}(\Omega)}\big\}^2\Big)^{1/2},
				\end{align*}
where 	$C_0:= \bar{C}_{\bar p} \Big(\frac{\bar p(\bar p-1)}{2}\Big)^{1/2}(\bar L(m_0+m_1)+2h_j^{1/2} \Big(\frac{\bar p(\bar p-1)}{2}\Big)^{1/2}(m_0+m_1)^2 \bar L^2)$,		which on using Young's inequality	yields 
					\begin{align*}
				  \max_{i\in\{1,\ldots,N\}} & \big\|\max_{\bar n\in\{0,\ldots, \bar k\}}  \big|X_{\bar n}^{i,N,h} \big|\big\|_{\mathscr{L}^{\bar p}(\Omega)} \leq \max_{i\in\{1,\ldots,N\}} \big\|X_{0}^{i}\big\|_{\mathscr{L}^{\bar p}(\Omega)} 
				 \\
				 & +   \bar LT(1+2\bar L+4 \bar{L}\Big(\frac{\bar{p}(\bar{p}-1)}{2}\Big)^{1/2})+C_0\sqrt T
				\\
				&  +2\bar{L}(1+2\bar L+4\bar{L} \Big(\frac{\bar{p}(\bar{p}-1)}{2}\Big)^{1/2})\sum_{j=1}^{\bar k} h_j \max_{i\in\{1,\ldots,N\}}\big\|X_{j-1}^{i,N,h}\big\|_{\mathscr{L}^{\bar p}(\Omega)}
					\\ 
					&  +2 C_0 \max_{i\in\{1,\ldots,N\}}\big\|\max_{\bar{n}\in\{0,\ldots,\bar{k}\}} |X_{\bar{n}}^{i,N,h}|\big\|_{\mathscr{L}^{\bar p}(\Omega)}^{1/2} \Big(\sum_{j=1}^{\bar k}  h_j  \max_{i\in\{1,\ldots,N\}}\big\|X_{j-1}^{i,N,h}\big\|_{\mathscr{L}^{\bar p}(\Omega)}\Big)^{1/2}
					\\
					 \leq &  \max_{i\in\{1,\ldots,N\}} \big\|X_{0}^{i}\big\|_{\mathscr{L}^{\bar p}(\Omega)}  +   \bar LT(1+2\bar L+4\bar{L} \Big(\frac{\bar{p}(\bar{p}-1)}{2}\Big)^{1/2})+C_0\sqrt T
					\\
					& +2\bar L(1+2\bar L+4\bar{L}\Big(\frac{\bar{p}(\bar{p}-1)}{2}\Big)^{1/2})\sum_{j=1}^{\bar k} h_j \max_{i\in\{1,\ldots,N\}}\big\|X_{j-1}^{i,N,h}\big\|_{\mathscr{L}^{\bar p}(\Omega)}
					\\ 
					& +\frac{1}{2}\max_{i\in\{1,\ldots,N\}}\big\|\max_{\bar n\in\{0,\ldots, \bar k\}}\big|X_{\bar n}^{i,N,h}\big|\big\|_{\mathscr{L}^{\bar p}(\Omega)}+2C_0^2 \sum_{j=1}^{\bar k}  h_j  \max_{i\in\{1,\ldots,N\}}\big\|X_{j-1}^{i,N,h}\big\|_{\mathscr{L}^{\bar p}(\Omega)}. 
				\end{align*}
				 Thus, we have 
				\begin{align*}
					\max_{i\in\{1,\ldots,N\}} & \big\|\max_{\bar n\in\{0,\ldots, \bar k\}}   \big|X_{\bar n}^{i,N,h} \big|\big\|_{\mathscr{L}^{\bar p}(\Omega)} \leq 2\max_{i\in\{1,\ldots,N\}}\big\|X_{0}^{i}\big\|_{\mathscr{L}^{\bar p}(\Omega)} 
					\\
					&+ 2\big(\bar LT+2\bar L^2T+4\bar{L}^2T\Big(\frac{\bar{p}(\bar{p}-1}{2}\Big)^{1/2})+C_0\sqrt T\big)
					\\
					& +4\big(\bar L+2\bar L^2++4\bar{L}^2 \Big(\frac{\bar{p}(\bar{p}-1)}{2}\Big)^{1/2}+C_0^2\big)\sum_{j=1}^{\bar k} h_j \max_{i\in\{1,\ldots,N\}}\big\|X_{j-1}^{i,N,h}\big\|_{\mathscr{L}^{\bar p}(\Omega)} < \infty , \notag
				\end{align*}
				for any $\bar{k} \in \{1,\ldots,n_h\}$. 
				The application of the discrete Gr{\"o}nwall inequality (see Lemma \ref{lem:Gronwall}) yields
				\begin{align*}
					\max_{i\in\{1,\ldots,N\}} &\big\|\max_{\bar n\in\{0,\ldots,n_h\}}\big|X_{\bar n}^{i, N, h}\big|\big\|_{\mathscr{L}^{\bar p}(\Omega)} \leq  \big(2\max_{i\in\{1,\ldots,N\}}\big\|X_{0}^{i}\big\|_{\mathscr{L}^{\bar p}(\Omega)}  + 2\bar LT+4\bar L^2T
					\\
					&+8\bar{L}^2 T \Big(\frac{\bar{p}(\bar{p}-1)}{2}\Big)^{1/2}+2C_0\sqrt T\big) e^{4T(\bar L+2\bar L^2+4 \bar{L}^2 (\frac{\bar{p}(\bar{p}-1)}{2})^{1/2}+ C_0^2)}.
				\end{align*}
				This completes the proof. 
			\end{proof}

			\section{Proof of Main Result} \label{sec:main_proof}
	
			The proof mechanism we use builds from the notions of ``bistability'' and ``consistency'' introduced in \cite{Beyn2010} (and further explored in \cite{Kruse2012,Kruse2019}). We introduce a notion of bistability and consistency for the numerical scheme \eqref{eq:scheme} of the interacting particle system \eqref{eq:interact} (and associated with the McKean--Vlasov SDE \eqref{eq:sde}) by choosing suitable norms and spaces. This choice of Banach spaces and norms is designed to capture the underlying key feature of the systems being analysed, namely, that we deal with interacting particle systems -- and to the best of our knowledge, are new. 
			
			Throughout this section and in line with the statement of Theorem \ref{thm:mainresult}, we take $q=\bar{p}/2$, where $\bar{p}$ comes from Assumption H--\ref{asump:initial_cond} and assumed to satisfy $\bar{p}\geq 4$. We next introduce the required notation and definitions to prove our main result Theorem \ref{thm:mainresult}.

			\subsection{Quantities of Interest, Norms,  Banach Spaces  and Residuals}		
			For the time grid $\varrho_h$ given in  \eqref{eq:varho-timeGrid}, define the Banach spaces $(\mathscr G^{h}_{q},\|\cdot~\|_{\mathscr G^{h}_{q}})$, $(\mathscr G^{h}_{S,q},\| \cdot\|_{\mathscr G^{h}_{S,q}})$ of stochastic  grid  processes $Y^h:=\big\{Y_j^{i,N, h} \in \mathscr L^{q}(\Omega,\mathscr F_j^h,\mathbb P;\,\mathbb R^d); j\in\{0,1,\ldots,n_h\} \mbox{ and } i \in \{1,\ldots,N\}\big\}$   as
			\begin{align*}
				\|Y^h\|_{\mathscr G^{h}_{q}}:=\max_{i\in\{1,\ldots,N\}}\big\|\max_{\: j\in\{0,\ldots,n_h\}}|
				Y_j^{i,N, h}|\big\|_{\mathscr{L}^{q}(\Omega)}<\infty,
			\end{align*}
			and 
			\begin{align} \label{eq:Spnorm}
				\|Y^h\|_{\mathscr G^{h}_{S,q}}:=&\max_{i\in\{1,\ldots,N\}}\big\|Y_0^{i,N, h}\big\|_{ \mathscr{L}^{q}(\Omega)}+\max_{i\in\{1,\ldots,N\}}\big\|\max_{j\in\{1,\ldots,n_h\}}|\sum_{k=1}^{j}
				Y_k^{i,N, h}|\big\|_{\mathscr{L}^{q}(\Omega)}<\infty,
			\end{align}
			respectively. 
		Also,  define 
			\begin{align}
				\label{eq:Gamma}
				&\Gamma^h_j(Y_{j-1}^{i,N,h},\mu_{j-1}^{Y,N,h},Y_{j,\eta}^{i,N,h},\mu_{j,\eta}^{Y,N,h},\eta_j):=h_jb(t_{j-1}+\eta_jh_j, Y_{j,\eta}^{i,N,h},\mu_{j,\eta}^{Y,N,h})  \notag
				\\
				&+\sum_{\ell=1}^{m_1}\int_{t_{j-1}}^{t_{j}}\tilde\sigma_1^{\ell}(s,t_{j-1}, Y_{j-1}^{i,N,h},  \mu_{j-1}^{Y,N,h})dW^{i,\ell}_s
				+\sum_{\ell=1}^{m_0}\int_{t_{j-1}}^{t_{j}}\tilde\sigma_0^{\ell}(s,t_{j-1}, Y_{j-1}^{i,N,h},\mu_{j-1}^{Y,N,h})dW_s^{0,\ell} ,
			\end{align}
			almost surely for any $j\in\{1,\ldots,n_h\}$ and $i\in\{1,\ldots,N\}$, where $Y_{j,\eta}^{i,N,h}$ is defined using \eqref{eq:euler}, and the empirical measures $\mu_{j-1}^{Y,N,h}$ and $\mu_{j,\eta}^{Y,N,h}$ are defined using  \eqref{eq:em}. 
				
				Define $\mathcal R[Y^h]\in \mathscr G^{h}_{S, q}$ for $q\geq 2$ as the collection of the pointwise residuals $\mathcal R^{i,N}_j[Y^h]$ (associated to executing the scheme with $Y^h$) by
				\begin{align} \label{eq:residual}
					\mathcal R^{i,N}_0[Y^h]&=Y^{i,N, h}_0-X_0^{i, N, h} , \notag 
					\\
					\mathcal R^{i,N}_j[Y^h]&=Y^{i,N, h}_j-Y^{i,N, h}_{j-1}-\Gamma^h_j(Y^{i,N, h}_{j-1}, \mu_{j-1}^{Y,N, h},Y_{j,\eta}^{i,N, h},\mu_{j,\eta}^{Y,N, h},\eta_j), \, j\in\{1,\ldots,n_h\} 
				\end{align}
		almost surely	for any    $i\in\{1,\ldots,N\}$.

			\subsection*{Randomised Quadrature Rule for Stochastic Processes.} 
			
				In this small section we discuss the randomised quadrature rule for stochastic processes developed in \cite{Kruse2019}. For this, let us recall the sequence of i.i.d.\ uniform random variables  $\eta:=\{\eta_j\}_{j\in \mathbb N}$  and the temporal grid $\varrho_h$ from Section \ref{subsec:scheme}. 
		Consider a stochastic process $V:[0,T]\times\tilde\Omega\mapsto\mathbb{R}^d$ on $(\tilde{\Omega}, \tilde{\mathscr{F}}, \tilde{\mathbb{P}})$ satisfying $V\in \mathscr{L}^{p}([0,T]\times \tilde{\Omega})$ for $p \geq 2$. 
			For each $\bar n\in\{1,\ldots,n_h\}$,  
			$\displaystyle \int_0^{t_{\bar n}}V(s)ds$ is approximated by the randomised Riemann sum 
		\begin{align}
		\label{eq:Theata1}
			\Theta_{\bar n, \eta}^{ h}\big[V\big]=\displaystyle\sum_{j=1}^{\bar n}h_jV(t_{j-1}+\eta_jh_j), 
		\end{align}
			which is a random variable on $(\Omega, \mathscr{F}, \mathbb{P})$  and  an unbiased estimator of $\displaystyle \int_0^{t_{\bar n}}V(s)ds$, i.e.,
$\mathbb{E}^\eta\Theta_{\bar n, \eta}^{ h}\big[V\big]=\displaystyle \int_0^{t_{\bar n}}V(s)ds \in \mathscr{L}^{p}(\tilde{\Omega})$. 
			Moreover,  due to \textcolor{black}{Theorem 4.1} in \cite{Kruse2019}, we have
			\begin{align*}
				\big\| \max_{\bar{n}\in \{1,\ldots,n_h\}}|\Theta_{\bar n, \eta}^{ h}\big[V\big] - \int_0^{t_{\bar n}}V(s)ds | \big\|_{\mathscr{L}^{ p}(\Omega)} \leq 2 \textcolor{black}{\bar{C}_p}
				 T^{( p-2)/(2 p)}\|V\|_{\mathscr{L}^{ p}([0,T]\times\tilde\Omega)}h^{1/2}.
			\end{align*}
			Additionally, if $V\in\mathscr{C}^{\alpha}([0,T],\,\mathscr{L}^{p}(\tilde\Omega))$  for some $\alpha\in(0,1]$, then
			\begin{align} 
				\big\|\max_{\bar n\in\{1,\ldots,n_{h}\}}\big|\Theta_{\bar n, \eta}^{h}[V]-\int_0^{t_{\bar n}}V(s)ds\big|\big\|_{\mathscr{L}^{ p}(\Omega)}\leq 
				\textcolor{black}{\bar{C}_p} \sqrt T\|V\|_{\mathscr{C}^{\alpha}([0,T],\,\mathscr{L}^{ p}(\tilde\Omega))}h^{\alpha+1/2} \label{eq:quad}
			\end{align}
	\textcolor{black}{where $\bar{C}_p$ is defined in Lemma \ref{lem:Burkholder}.} 		
			\subsection{Bistability of the  Scheme}
			We now specify the notion of a scheme's bistability and show that the proposed randomised Milstein scheme \eqref{eq:scheme} of the interacting particle system \eqref{eq:interact} is bistable (see Proposition \ref{prop:residual} below).
			For this, recall the scheme from \eqref{eq:scheme} and  define $X^h:=\{ X_j^{i, N, h} : j \in \{0,\ldots,n_h\}, i\in \{1,\ldots,N\}\}$. Clearly, $X^h \in\mathscr G_{q}^h$ due to Lemma \ref{lem:mb}. 
			Also, recall the definition of residuals  $\mathcal R[Y^h]$ from \eqref{eq:residual} for some set $Y^h\in\mathscr G_{q}^h $.
			
			\begin{definition}[Stochastic Bistability] 
				\label{def:StochBistability}
				The randomised Milstein scheme \eqref{eq:scheme} associated to the interacting particle system \eqref{eq:interact} is called stochastically bistable if there exist constants $C_6, C_7>0$, independent of   $h$ and $N\in \mathbb{N}$,  such that for any arbitrary time grid $\varrho_h$ as given in \eqref{eq:varho-timeGrid}  and for any $Y^h\in\mathscr G_{q}^h$, the following holds,
				\begin{align*}
					C_6\|\mathcal R[Y^h]\|_{\mathscr G_{S,q}^{h}}\leq\|Y^h-X^{h}\|_{\mathscr G_{q}^h}
					\leq                   
					C_7\|\mathcal R[Y^h]\|_{\mathscr G_{S,q}^{h}},
				\end{align*}
for $q\geq 2$. 
			\end{definition}
		Let us first establish some useful lemmas. 	
		
			\begin{lem} \label{lem:b}
				Let Assumptions \mbox{\normalfont{H--\ref{asump:lip}}} and \mbox{\normalfont{H--\ref{asump:holder}}}   hold. 
				Then, for any $q\geq 2$ 
				and  $Y^h,Z^h\in\mathscr G_{q}^h$, 
				\begin{align*}
					\big\|\max_{\bar n\in\{1,\ldots,\bar{k}\}}&\big|\sum_{j=1}^{\bar n}h_j\big[b(t_{j-1}+\eta_jh_j, Y_{j,\eta}^{i,N,h},\mu_{j,\eta}^{Y,N,h})-b(t_{j-1}+\eta_jh_j, Z_{j,\eta}^{i,N,h},\mu_{j,\eta}^{Z,N,h})\big]\big|\big\|_{ \mathscr{L}^{q}(\Omega)}
					\\
					&\leq C_8 \sum_{j=1}^{\bar k}h_j \max_{i\in\{1,\ldots,N\}}\big\|\max_{\bar{n} \in\{0,\ldots,j-1\}}\big|Y^{i,N,h}_{\bar{n}}-Z^{i, N, h}_{\bar{n}} \big|\big\|_{\mathscr{L}^{q}(\Omega)},
				\end{align*}
				for any $\bar{k}\in\{1,\ldots,n_h\}$ and $i \in \{1,\ldots,N\}$ where  $C_8:=2L+4L^2T  + 8L^2\sqrt{ Tq(q-1)/2}$.
			\end{lem}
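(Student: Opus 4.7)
The plan is to bring the supremum through the sum via Minkowski's inequality, apply the Lipschitz condition H--\ref{asump:lip} to each drift difference, reduce the Wasserstein distance between the two empirical measures to the maximum pointwise particle difference, and finally control the randomised Euler intermediate value $Y_{j,\eta}^{i,N,h}$ in terms of the grid value at time $t_{j-1}$.

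First, by Minkowski's inequality,
\begin{align*}
\big\| \max_{\bar n\leq\bar k}\big|\sum_{j=1}^{\bar n}h_j\,\Delta b_j\big| \big\|_{\mathscr L^q(\Omega)}
\;\leq\; \sum_{j=1}^{\bar k}h_j\,\|\Delta b_j\|_{\mathscr L^q(\Omega)},
\end{align*}
where $\Delta b_j := b(t_{j-1}+\eta_j h_j, Y_{j,\eta}^{i,N,h},\mu_{j,\eta}^{Y,N,h})-b(t_{j-1}+\eta_j h_j, Z_{j,\eta}^{i,N,h},\mu_{j,\eta}^{Z,N,h})$. Assumption H--\ref{asump:lip} gives
\begin{align*}
\|\Delta b_j\|_{\mathscr L^q(\Omega)}
\;\leq\; L\big(\|Y_{j,\eta}^{i,N,h}-Z_{j,\eta}^{i,N,h}\|_{\mathscr L^q(\Omega)}+\|\mathcal W_2(\mu_{j,\eta}^{Y,N,h},\mu_{j,\eta}^{Z,N,h})\|_{\mathscr L^q(\Omega)}\big).
\end{align*}
Coupling the two empirical measures particle-by-particle yields $\mathcal W_2^2(\mu_{j,\eta}^{Y,N,h},\mu_{j,\eta}^{Z,N,h})\leq \frac{1}{N}\sum_{i=1}^N |Y_{j,\eta}^{i,N,h}-Z_{j,\eta}^{i,N,h}|^2$; since $q\geq 2$, Minkowski applied in $\mathscr L^{q/2}$ gives $\|\mathcal W_2(\mu_{j,\eta}^{Y,N,h},\mu_{j,\eta}^{Z,N,h})\|_{\mathscr L^q(\Omega)}\leq \max_i\|Y_{j,\eta}^{i,N,h}-Z_{j,\eta}^{i,N,h}\|_{\mathscr L^q(\Omega)}$.

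To reduce the intermediate quantity to grid values, I would subtract the two instances of \eqref{eq:euler}, writing $Y_{j,\eta}^{i,N,h}-Z_{j,\eta}^{i,N,h}$ as the sum of $Y_{j-1}^{i,N,h}-Z_{j-1}^{i,N,h}$, a drift-difference contribution of length $\eta_j h_j$, and two Brownian increments of $W^i$ and $W^0$ over $[t_{j-1},t_{j-1}+\eta_j h_j]$ weighted by $\sigma_u$-differences. Applying the Lipschitz bound to each coefficient difference (together with the empirical-measure estimate above), and treating each Brownian increment by conditioning on $\eta_j$, which is independent of $W^i$ and $W^0$, reduces them to Itô integrals with deterministic terminal time $\eta_j h_j\leq h_j$, to which Mao's moment inequality (Lemma \ref{lem:Burkholder}) applies with constant $\sqrt{q(q-1)/2}$. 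This produces
\begin{align*}
\|Y_{j,\eta}^{i,N,h}-Z_{j,\eta}^{i,N,h}\|_{\mathscr L^q(\Omega)}
\;\leq\; \Big(1+2Lh_j+4L\sqrt{h_j\,q(q-1)/2}\Big)\max_i\|Y_{j-1}^{i,N,h}-Z_{j-1}^{i,N,h}\|_{\mathscr L^q(\Omega)}.
\end{align*}

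Inserting this bound into the estimate for $\|\Delta b_j\|_{\mathscr L^q(\Omega)}$, using $h_j\leq h\leq T$, and monotonising $\|Y_{j-1}^{i,N,h}-Z_{j-1}^{i,N,h}\|_{\mathscr L^q(\Omega)}\leq \|\max_{\bar n\leq j-1}|Y_{\bar n}^{i,N,h}-Z_{\bar n}^{i,N,h}|\|_{\mathscr L^q(\Omega)}$ on the right completes the argument with $C_8=2L+4L^2T+8L^2\sqrt{Tq(q-1)/2}$. The only subtle step is the random-time Brownian increment appearing in the Euler substep, but since $\eta_j$ is independent of both Brownian motions, conditioning on $\eta_j$ turns it into a standard Itô integral to which the moment inequality of Lemma \ref{lem:Burkholder} applies directly; the rest is an exercise in Minkowski, Lipschitz, and the standard empirical-measure coupling.
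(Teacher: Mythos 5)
Your proposal is correct and follows essentially the same approach as the paper: bring the supremum inside via Minkowski, apply the Lipschitz assumption to the drift, reduce the Wasserstein distance to a maximal pointwise particle difference, and propagate the intermediate (Euler) quantity back to grid values, arriving at the same constant $C_8=2L+4L^2T+8L^2\sqrt{Tq(q-1)/2}$. Your explicit remark that the random-time Brownian increment $W^{i}_{t_{j-1}+\eta_j h_j}-W^{i}_{t_{j-1}}$ is handled by conditioning on the independent $\eta_j$ is precisely the implicit step behind the paper's display containing the factor $\sqrt{h_j}\,\sqrt{\eta_j}\,(q(q-1)/2)^{1/2}$, so it is a useful clarification rather than a deviation.
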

			
			\begin{proof} 
				Notice that all the terms in the statement of this lemma are well-defined  	due to Remark \ref{rem:linear}.
				By using \eqref{eq:euler} with $Y^h,Z^h\in\mathscr G_{q}^h$  along with 	Minkowski's inequality and Assumption H--\ref{asump:lip}, we have 
				\begin{align}
					&\big\|Y_{j,\eta}^{i,N,h}-Z_{j,\eta}^{i,N,h}\big\|_{\mathscr{L}^{q}(\Omega)}\leq \big\|Y_{j-1}^{i,N,h}-Z_{j-1}^{i,N,h}\big\|_{\mathscr{L}^{q}(\Omega)}\notag
					\\
					&+h_j  \big\|\eta_j (b(t_{j-1}, Y_{j-1}^{i,N,h}, \mu_{j-1}^{Y,N,h})-b(t_{j-1}, Z_{j-1}^{i,N,h}, \mu_{j-1}^{Z, N, h}))\big\|_{\mathscr{L}^{q}(\Omega)}\notag
					\\ \nonumber
					&+\sqrt{h_j} \Big(\frac{q(q-1)}{2}\Big)^{1/2} 
					\sum_{u=0}^1\big\|\sqrt{\eta_j}(\sigma_u(t_{j-1}, Y_{j-1}^{i,N,h}, \mu_{j-1}^{Y,N,h})
					\\
					&\hspace{6cm}
					-\sigma_u(t_{j-1}, Z_{j-1}^{i,N,h}, \mu_{j-1}^{Z, N, h}))\big\|_{\mathscr{L}^{q}(\Omega)}
					\notag
					\\
					\leq & \big\|Y_{j-1}^{i,N,h}-Z_{j-1}^{i,N,h}\big\|_{\mathscr{L}^{q}(\Omega)} + \Big(LT  + 2L\sqrt T\Big(\frac{q(q-1)}{2}\Big)^{1/2}\Big) 
					\notag
					\\
					& \qquad \times \big\{\big\|Y_{j-1}^{i,N,h}-Z_{j-1}^{i,N,h}\big\|_{\mathscr{L}^{q}(\Omega)}+\big\|\mathcal W_2(\mu_{j-1}^{Y,N,h}, \mu_{j-1}^{Z, N, h})\big\|_{\mathscr{L}^{q}(\Omega)}\big\}\notag	
					\\
					&\leq \Big(1 +2L T + 4L \sqrt{T}\Big(\frac{q(q-1)}{2}\Big)^{1/2}\Big) \max_{i\in\{1,\ldots,N\}}\big\|\max_{\bar n\in\{0,\ldots,j-1\}}\big|Y_{\bar n}^{i,N,h}-Z_{\bar n}^{i, N, h}\big|\big\|_{\mathscr{L}^{q}(\Omega)}, \label{eq:htf}	
				\end{align}
				where the last inequality is obtained by using
				\begin{align*}
					\mathcal W_2(\mu_{j-1}^{Y,N,h}, \mu_{j-1}^{Z, N, h}) \leq \Big(\frac{1}{N} \sum_{i=1}^{N} \big|Y_{j-1}^{i,N,h}-Z_{j-1}^{i,N,h}\big|^2 \Big)^{1/2},
				\end{align*}		
				for all $i\in\{1,\ldots, N\}$, $N\in\mathbb N$ and $j\in\{1,\ldots,n_h\}$. 
				
				Now, the application Assumption H--\ref{asump:lip}  	 yields
				\begin{align*}
					\big\|\max_{\bar n\in\{1,\ldots,\bar{k}\}}\big|\sum_{j=1}^{\bar n}&h_j\big[b(t_{j-1}+\eta_jh_j, Y_{j,\eta}^{i,N,h},\mu_{j,\eta}^{Y,N,h})-b(t_{j-1}+\eta_jh_j, Z_{j,\eta}^{i,N,h},\mu_{j,\eta}^{Z,N,h})\big]\big|\big\|_{\mathscr{L}^{q}(\Omega)}\notag
					\\
					&\leq  L \sum_{j=1}^{\bar k}h_j\big\{\big\|Y_{j,\eta}^{i,N,h}-Z_{j,\eta}^{i,N,h}\big\|_{\mathscr{L}^{q}(\Omega)}+\big\|\mathcal W_2(\mu_{j,\eta}^{Y,N,h},\mu_{j,\eta}^{Z,N,h})\big\|_{\mathscr{L}^{q}(\Omega)}\big\}\notag
					\\
					&\leq 2 L \sum_{j=1}^{\bar k}h_j\max_{i\in\{1,\ldots,N\}}\big\|Y_{j,\eta}^{i,N,h}-Z_{j,\eta}^{i,N,h}\big\|_{\mathscr{L}^{q}(\Omega)},
				\end{align*}	
				and then one uses \eqref{eq:htf} to complete the proof. 
			\end{proof}
			\begin{lem} \label{lem:sigma}
			Let Assumptions \mbox{\normalfont{H--\ref{asump:lip}}}, \mbox{\normalfont{H--\ref{asump:holder}}}, \mbox{\normalfont{H--\ref{asump:derv_lip*}}}   hold. Then, for any $q\geq 2$
			and $Y^h,Z^h\in\mathscr G_{q}^h$,  
			\begin{align*}
				&\big\|\max_{\bar n\in\{1,\ldots,\bar{k}\}}\big|\sum_{j=1}^{\bar n}\sum_{\ell=1}^{m_1}\int_{t_{j-1}}^{t_j}\big[\tilde\sigma_1^{\ell}(s,t_{j-1}, Y_{j-1}^{i,N,h},  \mu_{j-1}^{Y,N,h})
				\\
				&\qquad \qquad\qquad \qquad
				-\tilde\sigma_1^{\ell}(s,t_{j-1}, Z_{j-1}^{i,N,h},  \mu_{j-1}^{Z, N, h})\big]dW_s^{i,\ell}\big|\big\|_{\mathscr{L}^{q}(\Omega)}
				\\
				+& \big\|\max_{\bar n\in\{1,\ldots,\bar{k}\}}\big|\sum_{j=1}^{\bar n}\sum_{\ell=1}^{m_0}\int_{t_{j-1}}^{t_j}\big[\tilde\sigma_0^{\ell}(s,t_{j-1}, Y_{j-1}^{i,N,h},  \mu_{j-1}^{Y,N,h})
								\\
				&\qquad \qquad\qquad \qquad
				-\tilde\sigma_0^{\ell}(s,t_{j-1}, Z_{j-1}^{i,N,h},  \mu_{j-1}^{Z, N, h})\big]dW_s^{0,\ell}\big|\big\|_{\mathscr{L}^{q}(\Omega)}
				\\
				& \qquad \leq C_9 \Big(\sum_{j=1}^{\bar k}h_j\max_{i\in\{1,\ldots,N\}}\big\|\max_{\bar n\in\{0,\ldots,j-1\}}\big|Y^{i,N,h}_{\bar n}-Z^{i, N, h}_{\bar n}\big|\big\|_{\mathscr{L}^{q}(\Omega)}^2\Big)^{1/2},
			\end{align*}
			for all $i\in\{1,\ldots,N\}$ and $\bar{k}\in\{1,\ldots,n_h\}$, where $C_9:= (m_0 + m_1) \bar{C}_{q} \sqrt {q(q-1)/2} \big(2 L+5(m_0+m_1)L\sqrt {Tq(q-1)/2}\big)$ and $\bar{C}_q$ is given in Lemma \ref{lem:Burkholder}.    
		\end{lem}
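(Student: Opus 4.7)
My approach is to apply the Burkholder--Davis--Gundy (BDG) inequality from Lemma \ref{lem:Burkholder} to each of the two cumulative-maximum stochastic integrals on the left, and then to decompose $\tilde\sigma_u^\ell[Y^h]-\tilde\sigma_u^\ell[Z^h]$ using \eqref{eq:sigma_tilde} into a plain $\sigma_u$-difference and the two correction terms $\Lambda_{\sigma_u\sigma_v}^\ell$. Each piece will then be controlled using the Lipschitz hypotheses H--\ref{asump:lip} and H--\ref{asump:derv_lip*}, combined with the elementary bound
\begin{align*}
\mathcal W_2(\mu_{j-1}^{Y,N,h},\mu_{j-1}^{Z,N,h})\leq \Big(\tfrac{1}{N}\sum_{i=1}^N|Y_{j-1}^{i,N,h}-Z_{j-1}^{i,N,h}|^2\Big)^{1/2},
\end{align*}
which, once its $\mathscr L^q$-norm is taken, is dominated by $\max_i\|Y_{j-1}^{i,N,h}-Z_{j-1}^{i,N,h}\|_{\mathscr L^q(\Omega)}$; this is the mechanism that keeps all constants independent of $N$.

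Concretely, I would first apply BDG to obtain
\begin{align*}
\Big\|\max_{\bar n\leq \bar k}\Big|\sum_{j=1}^{\bar n}\int_{t_{j-1}}^{t_j}\bigl(\tilde\sigma_u^\ell[Y]-\tilde\sigma_u^\ell[Z]\bigr)\,dW_s^{\cdot,\ell}\Big|\Big\|_{\mathscr L^q(\Omega)}\leq \bar C_q\Big\|\Big(\sum_{j=1}^{\bar k}\int_{t_{j-1}}^{t_j}|\tilde\sigma_u^\ell[Y]-\tilde\sigma_u^\ell[Z]|^2\,ds\Big)^{1/2}\Big\|_{\mathscr L^q(\Omega)},
\end{align*}
and then use Minkowski's inequality (valid since $q\geq 2$) to pull the $\mathscr L^q$-norm inside the square root. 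For the $\sigma_u^\ell$ component of $\tilde\sigma_u^\ell$, Assumption H--\ref{asump:lip} combined with the Wasserstein bound above gives a factor of $2L$ against $\max_i\|\max_{\bar n\leq j-1}|Y^{i,N,h}_{\bar n}-Z^{i,N,h}_{\bar n}|\|_{\mathscr L^q}$. For the $\Lambda_{\sigma_u\sigma_v}^\ell$ components --- which are themselves $[t_{j-1},s]$-stochastic integrals of the products $\partial_x\sigma_u^\ell\sigma_v^{\ell_1}$ and $\partial_\mu\sigma_u^\ell(\cdot,x^k)\sigma_v^{\ell_1}(\cdot,x^k)$ --- I would apply BDG and Minkowski a second time and invoke H--\ref{asump:derv_lip*}, which provides Lipschitz estimates for precisely these product functions. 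This inner BDG contributes the factor $\sqrt{q(q-1)/2}$ together with $\sqrt{s-t_{j-1}}\leq\sqrt T$, and summing over the inner noise indices $\ell_1$ produces a factor $(m_0+m_1)$.

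Substituting back into the outer BDG replaces $\int_{t_{j-1}}^{t_j}(\cdot)^2\,ds$ by $h_j(\cdot)^2$; summing over the outer noise index $\ell\in\{1,\ldots,m_u\}$ across both left-hand terms produces the leading $(m_0+m_1)\bar C_q\sqrt{q(q-1)/2}$ factor, and combining the contribution of the $\sigma_u$-part with the four summands coming from the two $\Lambda$'s yields the coefficient $5(m_0+m_1)L\sqrt{Tq(q-1)/2}$, so that the constant assembles precisely into $C_9$ as stated. The main obstacle, I expect, is twofold: first, careful bookkeeping to line up the constants; and second --- more conceptually --- ensuring that the two nested layers of empirical-measure handling (once inside $\sigma_u^\ell$ and once inside the $k$-average of the $\partial_\mu\sigma_u^\ell$-terms appearing in each $\Lambda$) do not introduce any hidden $N$-dependence. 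This is precisely what the Jensen-type bound on $\mathcal W_2$ rules out, and it is the reason why a naive Cauchy--Schwarz on the $k$-sum would be insufficient here.
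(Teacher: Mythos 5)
Your proposal is correct and follows essentially the same route as the paper: outer discrete BDG (Lemma \ref{lem:Burkholder}) plus Minkowski, decomposition of $\tilde\sigma_u^\ell$ into the plain $\sigma_u^\ell$ piece and the two $\Lambda$-corrections, an It\^o-integral $\mathscr L^q$-moment bound for the $\Lambda$'s (the paper invokes Theorem 7.1 of Mao rather than a literal second BDG, but the effect is the same, producing $h_j^{1/2}\sqrt{q(q-1)/2}$), H--\ref{asump:lip} and H--\ref{asump:derv_lip*} for the Lipschitz control, and the Wasserstein/Jensen bound together with $\frac1N\sum_k\|\cdot\|\le\max_k\|\cdot\|$ to keep everything $N$-free; the bookkeeping then assembles into $C_9$ exactly as you describe. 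One small aside: your closing remark that a ``naive Cauchy--Schwarz on the $k$-sum would be insufficient'' is not quite right --- Cauchy--Schwarz followed by $\frac1{\sqrt N}(\sum_k a_k^2)^{1/2}\le\max_k a_k$ also removes the $N$-dependence --- but this does not affect your argument.
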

		\begin{proof}
		Notice that 
		\begin{align*}
	\Big\{ \sum_{j=1}^{\bar n} \sum_{\ell=1}^{m_1}\int_{t_{j-1}}^{t_j}\big[\tilde\sigma_1^{\ell}(s,t_{j-1}, Y_{j-1}^{i,N,h},  \mu_{j-1}^{Y,N,h})-\tilde\sigma_1^{\ell}(s,t_{j-1}, Z_{j-1}^{i,N,h},  \mu_{j-1}^{Z, N, h})\big]dW_s^{i,\ell} \Big\}_{\bar{n}},
		\end{align*}
		with ${\bar{n} \in \{0,\ldots,n_h\}}$
		is  an $\{\mathscr F_{\bar{n}}^h\}_{\bar{n}\in\{0,\ldots,n_h\}}$-adapted standard martingale. 
Thus, on using 		 Lemma \ref{lem:Burkholder} and Minkowski's inequality, one obtains, 		for all $i\in\{1,\ldots, N\}$ and $\bar{k}\in\{1,\ldots,n_h\}$,
			\begin{align}
			\nonumber
			\label{eq:tilde_sigma}
	\big\|&\max_{\bar n\in\{1,\ldots,\bar{k}\}}\big|\sum_{j=1}^{\bar n}\sum_{\ell=1}^{m_1}\int_{t_{j-1}}^{t_j}\big[\tilde\sigma_1^{\ell}(s,t_{j-1}, Y_{j-1}^{i,N,h},  \mu_{j-1}^{Y,N,h})
					\\ \nonumber
				&\hspace{5cm}
				-\tilde\sigma_1^{\ell}(s,t_{j-1}, Z_{j-1}^{i,N,h},  \mu_{j-1}^{Z, N, h})\big]dW_s^{i,\ell}\big|\big\|_{\mathscr{L}^{q}(\Omega)} \notag
				\\ \nonumber
				&\leq  \bar{C}_{q}\big\|\sum_{j=1}^{\bar{k}}\big|\sum_{\ell=1}^{m_1}\int_{t_{j-1}}^{t_j}\big[\tilde\sigma_1^{\ell}(s,t_{j-1}, Y_{j-1}^{i,N,h},  \mu_{j-1}^{Y,N,h})
								\\ \nonumber
				&\hspace{5cm}
				-\tilde\sigma_1^{\ell}(s,t_{j-1}, Z_{j-1}^{i,N,h},  \mu_{j-1}^{Z, N, h})\big]dW_s^{i,\ell}\big|^2\big\|_{\mathscr{L}^{q/2}(\Omega)}^{1/2}\notag
				\\  \nonumber
				&\leq \bar{C}_{q}\Big(\sum_{j=1}^{\bar{k}}\big\|\sum_{\ell=1}^{m_1}\int_{t_{j-1}}^{t_j}\tilde\sigma_1^{\ell}(s,t_{j-1}, Y_{j-1}^{i,N,h},  \mu_{j-1}^{Y,N,h})
								\\ \nonumber
				&\hspace{5cm}-\tilde\sigma_1^{\ell}(s,t_{j-1}, Z_{j-1}^{i,N,h},  \mu_{j-1}^{Z, N, h})dW_s^{i,\ell}\big\|^2_{\mathscr{L}^{q}(\Omega)}\Big)^{1/2}\notag
				\\ \nonumber
				&\leq \sqrt m_1 \bar{C}_{q} \Big(\frac{q(q-1)}{2}\Big)^{1/2} \Big(\sum_{j=1}^{\bar{k}}h_j^{(q-2)/q}\sum_{\ell=1}^{m_1}\big[\int_{t_{j-1}}^{t_j}\big\|\tilde\sigma_1^{\ell}(s,t_{j-1}, Y_{j-1}^{i,N,h},  \mu_{j-1}^{Y,N,h})\notag
				\\
				&\hspace{5cm}-\tilde\sigma_1^{\ell}(s,t_{j-1}, Z_{j-1}^{i,N,h},  \mu_{j-1}^{Z, N, h})\big\|^{q}_{\mathscr{L}^{q}(\Omega)}ds\big]^{2/q}\Big)^{1/2},
			\end{align}
		where the last inequality is obtained due to  Theorem 7.1 in \cite{Mao2008}.
			
			Now, recall \eqref{eq:lamb}, \eqref{eq:lamb*} and \eqref{eq:sigma_tilde} and use Assumption H--\ref{asump:lip} to get the following, 
			\begin{align*}
				\big\|\tilde\sigma_1^{\ell}(s&,t_{j-1}, Y_{j-1}^{i,N,h},  \mu_{j-1}^{Y,N,h})-\tilde\sigma_1^{\ell}(s,t_{j-1}, Z_{j-1}^{i,N,h},  \mu_{j-1}^{Z, N, h})\|_{\mathscr{L}^{q}(\Omega)}
				\\
				\leq &  \|\sigma_1^{\ell}(t_{j-1}, Y_{j-1}^{i,N,h},  \mu_{j-1}^{Y,N,h})-\sigma_1^{\ell}(t_{j-1}, Z_{j-1}^{i,N,h},  \mu_{j-1}^{Z, N, h})\|_{\mathscr{L}^{q}(\Omega)}
				\\
				&+\sum_{u=0}^{1}\big\|\Lambda_{\sigma_1\sigma_u}^\ell(s,t_{j-1}, Y_{j-1}^{i,N,h},  \mu_{j-1}^{Y,N,h})-\Lambda_{\sigma_1\sigma_u}^\ell(s,t_{j-1}, Z_{j-1}^{i,N,h}, \mu_{j-1}^{Z, N, h})\big\|_{\mathscr{L}^{q}(\Omega)}
				\\
				\leq & L \big\{\big\|Y_{j-1}^{i,N,h}- Z_{j-1}^{i,N,h}\big\|_{\mathscr{L}^{q}(\Omega)}+\big\|\mathcal W_2(\mu_{j-1}^{Y,N,h}, \mu_{j-1}^{Z, N, h})\big\|_{\mathscr{L}^{q}(\Omega)}\big\}
				\\
				&+h_j^{1/2}\Big(\frac{q(q-1)}{2}\Big)^{1/2} \sum_{u=0}^{1}\sum_{\ell_1=1}^{m_u}\big\|\partial_{x}\sigma_1^{\ell}(t_{j-1}, Y_{j-1}^{i,N,h}, \mu_{j-1}^{Y,N,h})\sigma_u^{\ell_1}(t_{j-1},Y_{j-1}^{i,N,h}, \mu_{j-1}^{Y,N,h})\notag
				\\
				&-\partial_{x}\sigma_1^{\ell}(t_{j-1}, Z_{j-1}^{i,N,h}, \mu_{j-1}^{Z, N, h})\sigma_u^{\ell_1}
				(t_{j-1}, Z_{j-1}^{i,N,h}, \mu_{j-1}^{Z, N, h})\big\|_{\mathscr{L}^{q}(\Omega)}\notag
				\\
				&+h_j^{1/2}\Big(\frac{q(q-1)}{2}\Big)^{1/2} \frac{1}{N}\sum_{u=0}^{1}\sum_{\ell_1=1}^{m_u}\sum_{k=1}^{N}\big\|\partial_{\mu}\sigma_1^{\ell}(t_{j-1}, Y_{j-1}^{i,N,h}, \mu_{j-1}^{Y,N,h}, Y_{j-1}^{k, N,h})
				\notag
				\\
				&\times \sigma_u^{\ell_1}(t_{j-1},Y_{j-1}^{k, N,h}, \mu_{j-1}^{Y,N,h})-\partial_{\mu}\sigma_1^{\ell}(t_{j-1}, Z_{j-1}^{i,N,h}, \mu_{j-1}^{Z, N, h}, Z_{j-1}^{k, N, 
					h})
					\\
					& \times \sigma_u^{\ell_1}(t_{j-1}, Z_{j-1}^{k, N, h}, \mu_{j-1}^{Z, N, h})\big\|_{\mathscr{L}^{q}(\Omega)},\notag
			\end{align*}
which due to Assumption H--\ref{asump:derv_lip*} yields
\begin{align*}
\big\|\tilde\sigma_1^{\ell}(s&,t_{j-1}, Y_{j-1}^{i,N,h},  \mu_{j-1}^{Y,N,h})-\tilde\sigma_1^{\ell}(s,t_{j-1}, Z_{j-1}^{i,N,h},  \mu_{j-1}^{Z, N, h})\|_{\mathscr{L}^{q}(\Omega)}
				\\
				 \leq & \Big(L+2(m_0+m_1)L\Big(\frac{Tq(q-1)}{2}\Big)^{1/2} \Big) 
				\\
				& \times \big\{\big\|Y_{j-1}^{i,N,h}- Z_{j-1}^{i,N,h}\big\|_{\mathscr{L}^{q}(\Omega)}+\big\|W_2(\mu_{j-1}^{Y,N,h}, \mu_{j-1}^{Z, N, h})\big\|_{\mathscr{L}^{q}(\Omega)}\big\}
				\\
				&+(m_0+m_1)L\Big(\frac{Tq(q-1)}{2}\Big)^{1/2} \frac{1}{N}\sum_{k=1}^N \big\|Y_{j-1}^{k,N,h}- Z_{j-1}^{k,N,h} \big\|_{\mathscr{L}^{q}(\Omega)} \notag
				\\
				\leq & \Big(2L+5(m_0+m_1)L\Big(\frac{Tq(q-1)}{2}\Big)^{1/2} \Big)  \max_{i\in \{1,\ldots,N\}} \big\|\max_{\bar n\in\{0,\ldots,j-1\}} \big|Y_{\bar n}^{i,N,h}-Z_{\bar n}^{i, N, h}\big|\big\|_{\mathscr{L}^{q}(\Omega)} ,
\end{align*}
for any $s\in [t_{j-1}, t_j]$, $j\in \{1,\ldots,n_h\}$ and $i\in\{1,\ldots,N\}$. 
	The  proof of the first part of the lemma is completed by substituting the above in \eqref{eq:tilde_sigma}.  
	A bound for the terms involving $\tilde{\sigma}_0$ follows by similar arguments. 
		\end{proof}
			
			As a consequence of Lemmas \ref{lem:b} and  \ref{lem:sigma}, we obtain the following corollary. 
			\begin{cor} \label{cor:Gam1}
			If Assumptions \mbox{\normalfont{H--\ref{asump:lip}}}, \mbox{\normalfont{H--\ref{asump:holder}}} and \mbox{\normalfont{H--\ref{asump:derv_lip*}}} are satisfied. Then, for any $q\geq 2$
			and $Y^h,Z^h\in\mathscr G_{q}^h$,  
			\begin{align*}
				&\big\|\max_{\bar n\in\{1,\ldots,\bar{k}\}}\big|\sum_{j=1}^{\bar n}\big[\Gamma^h_j(Y_{j-1}^{i,N,h},\mu_{j-1}^{Y,N,h},Y_{j,\eta}^{i,N,h},\mu_{j,\eta}^{Y,N,h},\eta_j)
				\\
				&\hspace{5cm}-\Gamma^h_j(Z_{j-1}^{i,N,h},\mu_{j-1}^{Z,N,h},Z_{j,\eta}^{i,N,h},\mu_{j,\eta}^{Z,N,h},\eta_j)\big]\big|\big\|_{\mathscr{L}^{q}(\Omega)}
				\\
				\leq &   C_8\sum_{j=1}^{\bar{k}}h_j\max_{i\in\{1,\ldots,N\}} \big\|\max_{\bar n\in\{0,\ldots,j-1\}}\big|Y^{i,N,h}_{\bar n}-Z^{i, N, h}_{\bar n}\big|\big\|_{\mathscr{L}^{q}(\Omega)}
				\\
				&+C_9\Big(\sum_{j=1}^{\bar{k}}h_j\max_{i\in\{1,\ldots,N\}} \big\|\max_{\bar n\in\{0,\ldots,j-1\}}\big|Y^{i,N,h}_{\bar n}-Z^{i, N, h}_{\bar n}\big|\big\|_{\mathscr{L}^{q}(\Omega)}^2\Big)^{1/2} 
				\\
				\leq&  (C_8 T+C_9\sqrt T) \big\|Y^h-Z^h\big\|_{\mathscr G_{q}^h}, 
			\end{align*}
			for all  $\bar{k}\in\{1,\ldots,n_h\}$ and $i\in\{1,\ldots,N\}$, where the  constants  $C_8$ and $C_9$ appear in Lemmas \ref{lem:b} and  \ref{lem:sigma}, respectively.
					\end{cor}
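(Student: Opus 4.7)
The plan is to observe that Corollary \ref{cor:Gam1} is a direct packaging of Lemmas \ref{lem:b} and \ref{lem:sigma} via the triangle inequality. Recalling the definition of $\Gamma^h_j$ in \eqref{eq:Gamma}, the difference $\Gamma^h_j(Y^h,\eta_j)-\Gamma^h_j(Z^h,\eta_j)$ decomposes additively into three pieces: the drift contribution $h_j\{b(t_{j-1}+\eta_jh_j,Y_{j,\eta}^{i,N,h},\mu_{j,\eta}^{Y,N,h})-b(t_{j-1}+\eta_jh_j,Z_{j,\eta}^{i,N,h},\mu_{j,\eta}^{Z,N,h})\}$, the $\tilde\sigma_1$-stochastic integral contribution, and the analogous $\tilde\sigma_0$-stochastic integral contribution driven by the common noise.

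First I would apply the triangle inequality inside $\|\max_{\bar n \leq \bar k}|\sum_{j=1}^{\bar n}(\cdot)|\|_{\mathscr L^q(\Omega)}$ to separate these three terms. For the drift term, I invoke Lemma \ref{lem:b} directly, yielding the contribution $C_8\sum_{j=1}^{\bar k}h_j \max_i \|\max_{\bar n \leq j-1}|Y_{\bar n}^{i,N,h}-Z_{\bar n}^{i,N,h}|\|_{\mathscr L^q(\Omega)}$. For the two stochastic-integral terms, Lemma \ref{lem:sigma} provides exactly the bound $C_9(\sum h_j \max_i \|\max_{\bar n\leq j-1}|Y_{\bar n}^{i,N,h}-Z_{\bar n}^{i,N,h}|\|_{\mathscr L^q(\Omega)}^2)^{1/2}$ once the two martingale contributions are controlled; here the constant $C_9$ absorbs both the $m_1$ and $m_0$ channels since the estimate in Lemma \ref{lem:sigma} was stated as a single bound for the combined $\tilde\sigma_1$ and $\tilde\sigma_0$ parts.

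For the second (cleaner) inequality, the key observation is that by the very definition of the norm $\|\cdot\|_{\mathscr G^h_q}$ one has, for each $j$,
\begin{equation*}
\max_{i\in\{1,\ldots,N\}} \big\|\max_{\bar n\in\{0,\ldots,j-1\}}\big|Y^{i,N,h}_{\bar n}-Z^{i, N, h}_{\bar n}\big|\big\|_{\mathscr{L}^{q}(\Omega)} \leq \|Y^h-Z^h\|_{\mathscr G^h_q}.
\end{equation*}
Factoring this quantity out of each sum, the first summand becomes bounded by $C_8\|Y^h-Z^h\|_{\mathscr G^h_q}\sum_{j=1}^{\bar k}h_j \leq C_8 T\|Y^h-Z^h\|_{\mathscr G^h_q}$, and the second summand by $C_9\|Y^h-Z^h\|_{\mathscr G^h_q}(\sum_{j=1}^{\bar k}h_j)^{1/2} \leq C_9\sqrt T \|Y^h-Z^h\|_{\mathscr G^h_q}$, using $\sum_{j=1}^{\bar k}h_j \leq T$. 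Summing delivers $(C_8 T + C_9\sqrt T)\|Y^h-Z^h\|_{\mathscr G^h_q}$.

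There is no real obstacle in this corollary: all the analytical work was already done in Lemmas \ref{lem:b} and \ref{lem:sigma}. The only point that deserves a line of care is that $Y_{j,\eta}^{i,N,h}$ and $\mu_{j,\eta}^{Y,N,h}$ appearing in the drift term of \eqref{eq:Gamma} are themselves defined through the Euler step \eqref{eq:euler} from $Y_{j-1}^{i,N,h}$, so that bounding $\|Y_{j,\eta}^{i,N,h}-Z_{j,\eta}^{i,N,h}\|_{\mathscr L^q(\Omega)}$ in terms of $\max_{\bar n\leq j-1}|Y_{\bar n}^{i,N,h}-Z_{\bar n}^{i,N,h}|$ is exactly the content of the intermediate bound \eqref{eq:htf} in the proof of Lemma \ref{lem:b}, which justifies the use of Lemma \ref{lem:b} here directly without re-opening that estimate.
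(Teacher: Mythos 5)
Your proof is correct and follows essentially the same route as the paper: decompose $\Gamma^h_j(Y)-\Gamma^h_j(Z)$ into the drift, $\tilde\sigma_1$, and $\tilde\sigma_0$ pieces, apply the triangle inequality, and invoke Lemmas \ref{lem:b} and \ref{lem:sigma} directly (the paper's proof is in fact exactly this, stated in one line). Your extra observations — that Lemma \ref{lem:sigma} already bounds the \emph{sum} of the two stochastic-integral norms so it dominates the norm of their sum, and that the passage to the $\|\cdot\|_{\mathscr G^h_q}$ bound uses only $\max_i\|\max_{\bar n\leq j-1}|Y_{\bar n}^{i,N,h}-Z_{\bar n}^{i,N,h}|\|_{\mathscr L^q}\leq\|Y^h-Z^h\|_{\mathscr G^h_q}$ together with $\sum_j h_j\leq T$ — are accurate and merely make explicit what the paper leaves implicit.
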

		\begin{proof}
			Recall $\Gamma^h_j$ from \eqref{eq:Gamma} and write
			\begin{align*}
				\Gamma^h_j(Y_{j-1}^{i,N,h},& \mu_{j-1}^{Y,N,h},Y_{j,\eta}^{i,N,h},\mu_{j,\eta}^{Y,N,h},\eta_j)-\Gamma^h_j(Z_{j-1}^{i,N,h},\mu_{j-1}^{Z,N,h},Z_{j,\eta}^{i,N,h},\mu_{j,\eta}^{Z,N,h},\eta_j)
				\\
				&= h_j\big[b(t_{j-1}+\eta_jh_j, Y_{j,\eta}^{i,N,h},\mu_{j,\eta}^{Y,N,h})-b(t_{j-1}+\eta_jh_j, Z_{j,\eta}^{i,N,h},\mu_{j,\eta}^{Z,N,h})\big]
				\\
				&\quad+\sum_{\ell=1}^{m_1}\int_{t_{j-1}}^{t_{j}}\big[\tilde\sigma_1^{\ell}(s, t_{j-1}, Y_{j-1}^{i,N,h},  \mu_{j-1}^{Y,N,h})-\tilde\sigma_1^{\ell}(s, t_{j-1}, Z_{j-1}^{i,N,h},  \mu_{j-1}^{Z, N, h})\big]dW^{i,\ell}_s
				\\
				&\quad+\sum_{\ell=1}^{m_0}\int_{t_{j-1}}^{t_{j}}\big[\tilde\sigma_0^{\ell}(s,t_{j-1}, Y_{j-1}^{i,N,h},\mu_{j-1}^{Y,N,h})-\tilde\sigma_0^{\ell}(s,t_{j-1}, Z_{j-1}^{i,N,h},\mu_{j-1}^{Z,N,h})\big]dW_s^{0,\ell}.
			\end{align*}
			Then, on using Lemmas \ref{lem:b} and  \ref{lem:sigma}, we get the required result.
		\end{proof}
		We now prove the bistability of the scheme \eqref{eq:scheme} in the following proposition. 
		\begin{prop} \label{prop:residual}
		Let Assumptions \mbox{\normalfont{H--\ref{asump:initial_cond}}} with $\bar{p}\geq 4$ and set $q=\bar{p}/2\geq 2$, \mbox{\normalfont{H--\ref{asump:lip}}},  \mbox{\normalfont{H--\ref{asump:holder}}} and \mbox{\normalfont{H--\ref{asump:derv_lip*}}}  hold. Then, 
			for any $Y^h,X^{h}\in\mathscr G_{q}^h$, we have 
			\begin{align*}
				C_6\|\mathcal R[Y^h]\|_{\mathscr G_{S,q}^{h}}\leq\|Y^h-X^{h}\|_{\mathscr G_{q}^h}\leq  C_7\|\mathcal R[Y^h]\|_{\mathscr G_{S,q}^{h}},
			\end{align*}
			in other words, the randomised Milstein scheme given in \eqref{eq:scheme}  is  stochastically bistable in the sense of Definition  \ref{def:StochBistability} with  $C_6:=\frac{1}{3+ C_8 T+C_9\sqrt T}$ and $C_7:=2e^{(2C_8+C_9^2)T}$, where  the constants $C_8$ and $C_9$ appear in Lemmas \ref{lem:b} and \ref{lem:sigma}, respectively. 		
		\end{prop}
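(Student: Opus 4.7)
My plan is to exploit an identity that follows from the definition of the residuals \eqref{eq:residual} once one notes that $X^h$ itself solves the scheme, so that $\mathcal{R}^{i,N}_j[X^h]=0$ for every $j\in\{0,\ldots,n_h\}$ and $i\in\{1,\ldots,N\}$. Subtracting the recursion satisfied by $X^h$ from the one satisfied by $Y^h$ and telescoping yields, almost surely,
\begin{align*}
Y^{i,N,h}_j - X^{i,N,h}_j
= \mathcal{R}^{i,N}_0[Y^h] + \sum_{k=1}^{j} \mathcal{R}^{i,N}_k[Y^h] + \sum_{k=1}^{j}\bigl[\Gamma^h_k(Y^h;\eta_k) - \Gamma^h_k(X^h;\eta_k)\bigr],
\end{align*}
for every $j\in\{0,\ldots,n_h\}$. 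This identity is the workhorse for both directions of the bistability inequality, and all norms involved are finite since $X^h\in\mathscr{G}^h_q$ by Lemma \ref{lem:mb} and $Y^h\in\mathscr{G}^h_q$ by hypothesis.

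For the lower bound (constant $C_6$), I would rearrange the identity to isolate $\sum_{k=1}^j \mathcal{R}^{i,N}_k[Y^h]$, take the $\mathscr{L}^q(\Omega)$-norm, and maximise over $j$ and $i$. The $\Gamma$-difference is controlled by Corollary \ref{cor:Gam1}, giving a bound by $(C_8T+C_9\sqrt{T})\|Y^h-X^h\|_{\mathscr{G}^h_q}$, while the $j=0$ contribution satisfies $\max_i\|\mathcal{R}^{i,N}_0[Y^h]\|_{\mathscr{L}^q(\Omega)} = \max_i\|Y^{i,N,h}_0-X^{i,N,h}_0\|_{\mathscr{L}^q(\Omega)} \leq \|Y^h-X^h\|_{\mathscr{G}^h_q}$. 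Summing the two pieces that make up $\|\mathcal{R}[Y^h]\|_{\mathscr{G}^h_{S,q}}$ (see \eqref{eq:Spnorm}) produces the bound $\|\mathcal{R}[Y^h]\|_{\mathscr{G}^h_{S,q}} \leq (3+C_8T+C_9\sqrt{T})\|Y^h-X^h\|_{\mathscr{G}^h_q}$, which is exactly the required $C_6$-inequality.

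For the upper bound (constant $C_7$), set $a_{\bar k} := \max_i \|\max_{\bar n\leq\bar k}|Y^{i,N,h}_{\bar n}-X^{i,N,h}_{\bar n}|\|_{\mathscr{L}^q(\Omega)}$. Taking norms in the identity, dominating the partial-sum term by $\|\mathcal{R}[Y^h]\|_{\mathscr{G}^h_{S,q}}$, and applying Corollary \ref{cor:Gam1} to the $\Gamma$-differences gives
\begin{align*}
a_{\bar k} \leq \|\mathcal{R}[Y^h]\|_{\mathscr{G}^h_{S,q}} + C_8 \sum_{j=1}^{\bar k} h_j \, a_{j-1} + C_9 \Bigl(\sum_{j=1}^{\bar k} h_j \, a_{j-1}^2\Bigr)^{1/2}.
\end{align*}
The main obstacle is the square-root coming from the diffusion part, which prevents a direct application of the linear discrete Gr\"onwall lemma. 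My approach is to square both sides and use an elementary inequality of the form $(x+y+z)^2 \leq C_1 x^2 + C_2(y^2+z^2)$, together with the Cauchy--Schwarz estimate $(\sum h_j a_{j-1})^2 \leq T \sum h_j a_{j-1}^2$, which homogenises the two sum terms. This delivers a recursion of the form $a_{\bar k}^2 \leq A\, \|\mathcal{R}[Y^h]\|_{\mathscr{G}^h_{S,q}}^2 + B \sum_{j=1}^{\bar k} h_j\, a_{j-1}^2$ with constants $A,B$ depending only on $C_8, C_9, T$. A single application of the discrete Gr\"onwall lemma (Lemma \ref{lem:Gronwall}) to the sequence $\{a^2_{\bar k}\}_{\bar k}$, followed by taking square roots, yields the exponential bound $\|Y^h-X^h\|_{\mathscr{G}^h_q} \leq C_7 \|\mathcal{R}[Y^h]\|_{\mathscr{G}^h_{S,q}}$.

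The critical conceptual point underlying the argument is the choice of the Banach spaces $(\mathscr{G}^h_q, \|\cdot\|_{\mathscr{G}^h_q})$ and $(\mathscr{G}^h_{S,q}, \|\cdot\|_{\mathscr{G}^h_{S,q}})$, which place the $\max_i$ over particles \emph{inside} the $\mathscr{L}^q$-norm rather than outside. With this convention all constants appearing ($C_8$, $C_9$ and hence $C_6,C_7$) are independent of $N$, as required by Theorem \ref{thm:mainresult}; a naive treatment of the interacting particle system as a single $\mathbb{R}^{dN}$-valued SDE would instead produce $N$-dependent constants and forbid the passage to the mean-field limit.
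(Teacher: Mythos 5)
Your overall architecture is the same as the paper's: start from the telescoping identity obtained by subtracting the scheme's exact recursion for $X^h$ (whose residuals vanish) from that of $Y^h$, treat the lower bound via Minkowski's inequality plus Corollary~\ref{cor:Gam1} (arriving at the same $3 + C_8T + C_9\sqrt T$), and treat the upper bound by a discrete Gr\"onwall argument applied to $a_{\bar k}:=\max_i\|\max_{\bar n\leq \bar k}|Y^{i,N,h}_{\bar n}-X^{i,N,h}_{\bar n}|\|_{\mathscr{L}^q(\Omega)}$. The one genuine difference is how you handle the square-root term in the upper-bound recursion. You square both sides (using $(x+y+z)^2\leq 3(x^2+y^2+z^2)$ and $(\sum h_j a_{j-1})^2 \leq T\sum h_j a_{j-1}^2$) to obtain a linear recursion in $a_{\bar k}^2$, then apply Gr\"onwall to the squared sequence. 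The paper instead stays at the linear scale: it pulls one factor out of the square root via $a_{j-1}^2\leq a_{\bar k}\,a_{j-1}$, applies Young's inequality to absorb $\tfrac12 a_{\bar k}$ into the left-hand side, and then applies Gr\"onwall directly. Both are valid and $N$- and $h$-independent, so bistability holds either way. The trade-off is in the constants: your squaring route yields something like $\sqrt{3}\,e^{\frac{3}{2}(C_8^2T+C_9^2)T}$, which grows quadratically in $C_8$ inside the exponent, whereas the paper's absorption trick gives the cleaner and generally sharper $C_7 = 2e^{(2C_8 + C_9^2)T}$ stated in the proposition. So you have proved the qualitative statement (bistability with some $C_7(C_8,C_9,T)$) but not the specific constant declared in the proposition; if you want that exact constant you should adopt the $a_{j-1}^2\leq a_{\bar k}a_{j-1}$-plus-Young device rather than squaring. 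Everything else, including the crucial observation that the particle maximum must sit \emph{inside} the $\mathscr{L}^q$-norm to keep constants $N$-free, matches the paper.
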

		\begin{proof}
			Recall Equations \eqref{eq:scheme}, \eqref{eq:Gamma} and  \eqref{eq:residual}  to write
			\begin{align}
				X_{\bar n}^{i, N,h}& -X_0^{i, N, h}-\sum_{j=1}^{\bar n}\Gamma ^h_j(X_{j-1}^{i, N, h},\mu_{j-1}^{X, N, h},X_{j,\eta}^{i,N,h},\mu_{j, \eta}^{X, N, h},\eta_j)=0, \notag
			\\
				 \sum_{j=1}^{\bar n} \mathcal R_j^{i,N}[Y^h]&=Y_{\bar n}^{i,N,h}-Y_0^{i,N,h}-\sum_{j=1}^{\bar n}\Gamma ^h_j(Y_{j-1}^{i,N,h},\mu_{j-1}^{Y,N,h},Y_{j,\eta}^{i,N,h},\mu_{j,\eta}^{Y,N,h},\eta_j) \notag
				\\
				&=\big(Y_{\bar n}^{i,N,h}-X_{\bar n}^{i, N, h}\big)-\big(Y_0^{i,N,h}-X_0^{i,N,h}\big)  \notag
				\\
				-\sum_{j=1}^{\bar n}\big(\Gamma ^h_j(Y_{j-1}^{i,N,h},\mu_{j-1}^{Y,N,h},&Y_{j,\eta}^{i,N,h},\mu_{j,\eta}^{Y,N,h}, \eta_j)-\Gamma ^h_j(X_{j-1}^{i,N,h},\mu_{j-1}^{X, N, h},X_{j,\eta}^{i,N,h},\mu_{j, \eta}^{X, N, h},\eta_j)\big), \label{eq:rearrange}
			\end{align}
			for any $\bar{n}\in\{1,\ldots,n_h\}$,  $i\in\{1,\ldots,N\}$ and then use the Spijker norm \eqref{eq:Spnorm} to get the following, 
			\begin{align}
				\big\|\mathcal R[Y&^h]\big\|_{\mathscr G_{S,q}^{h}}  =  \max_{i\in\{1\ldots,N\}}\big\|\big|\mathcal R_0^{i,N}[Y^h]\big|\big\|_{\mathscr{L}^{q}(\Omega)}+\max_{i\in\{1,\ldots,N\}}\big\|\max_{\bar n\in\{1,\ldots,n_h\}}\big|\sum_{j=1}^{\bar n}\mathcal R_j^{i,N}[Y^h]\big|\big\|_{\mathscr{L}^{q}(\Omega)}\notag
				\\
				\leq & 2\max_{i\in\{1,\ldots,N\}}\big\|Y_0^{i,N,h}-X_0^{i,N,h}\big\|_{\mathscr{L}^{q}(\Omega)}+\max_{i\in\{1,\ldots,N\}}\big\|\max_{\bar n\in\{1,\ldots,n_h\}}\big|Y_{\bar n}^{i,N,h}-X_{\bar n}^{i, N, h} \big|\big\|_{\mathscr{L}^{q}(\Omega)}\notag
				\\
				&+\max_{i\in\{1,\ldots,N\}}\big\|\max_{\bar n\in\{1,\ldots,n_h\}}\big|\sum_{j=1}^{\bar n}\big(\Gamma ^h_j(Y_{j-1}^{i,N,h},\mu_{j-1}^{Y,N,h},Y_{j,\eta}^{i,N,h},\mu_{j,\eta}^{Y,N,h},\eta_j)\notag
				\\
				&\qquad-\Gamma ^h_j(X_{j-1}^{i, N,h},\mu_{j-1}^{X, N, h},X_{j,\eta}^{i,N,h},\mu_{j, \eta}^{X, N, h},\eta_j)\big)\big|\big\|_{\mathscr{L}^{q}(\Omega)}, \notag
			\end{align}			
		which on using Corollary \ref{cor:Gam1}  yields
			\begin{align}\label{eq:stable_residual}
				\big\|\mathcal R[Y^h]\big\|_{\mathscr G_{S,q}^{h}} \leq (3+ C_8 T+C_9\sqrt T)\|Y^{h}-X^{h}\|_{\mathscr G_{q}^h}. 
			\end{align}							
			Further, by  rearranging the terms of \eqref{eq:rearrange} and using Minkowski's inequality, one obtains
			\begin{align*}
				\max_{i\in\{1,\ldots,N\}}&\big\|\max_{\bar n\in\{0,\ldots, \bar k\}}\big|Y_{\bar n} 
				^{i,N,h}-X_{\bar n}^{i,N,h} \big|\big\|_{\mathscr{L}^{q}(\Omega)}
				\\
				\leq & \max_{i\in\{1,\ldots,N\}}\big\|\max_{\bar n\in\{1,\ldots,\bar k\}}\big|
				\sum_{j=1}^{\bar n}\big(\Gamma^h_j(Y_{j-1}^{i, N,h},\mu_{j-1}^{Y,N,h},Y_{j,\eta}^{i,N,h},\mu_{j,\eta}^{Y,N,h},\eta_j)
				\\
				&\qquad-\Gamma ^h_j(X_{j-1}^{i, N,h},\mu_{j-1}^{X, N, h},X_{j,\eta}^{i,N,h},\mu_{j, \eta}^{X, N, 
					h},\eta_j)\big)\big|\big\|_{\mathscr{L}^{q}(\Omega)}
				\\
				&+\max_{i\in\{1,\ldots,N\}}\big\||\mathcal R_0^{i,N}[Y^h]|
				\big\|_{\mathscr{L}^{q}(\Omega)}+\max_{i\in\{1,\ldots,N\}} \big\|\max_{\bar n\in\{1,\ldots,\bar k\}}\big|\sum_{j=1}^{\bar n}\mathcal R_j^{i,N}[Y^h]\big|\big\|_{\mathscr{L}^{q}(\Omega)},
							\end{align*}		
and then application of Corollary \ref{cor:Gam1} gives the following, 
\begin{align*}
\max_{i\in\{1,\ldots,N\}}& \big\|\max_{\bar n\in\{0,\ldots, \bar k\}}\big|Y_{\bar n} 
				^{i,N,h}-X_{\bar n}^{i,N,h} \big|\big\|_{\mathscr{L}^{q}(\Omega)}
\\
				\leq & C_8\sum_{j=1}^{\bar k}h_j\max_{i\in\{1,\ldots,N\}}\big\|\max_{\bar n\in\{0,
					\ldots,j-1\}}\big|Y_{\bar n}^{i, N, h}-X_{\bar n}^{i, N, h} \big|\big\|_{\mathscr{L}^{q}(\Omega)}
				\\
				&+C_9\Big(\sum_{j=1}^{\bar k}h_j\max_{i\in\{1,\ldots,N\}}\big\|\max_{\bar n\in\{0,
					\ldots,j-1\}}\big|Y_{\bar n}^{i, N, h}-X_{\bar n}^{i, N, h} \big|\big\|_{\mathscr{L}^{q}(\Omega)}^2\Big)^{1/2}+\big\|\mathcal R[Y^h]\big\|_{\mathscr G_{S,q}^{h}}
				\\
				\leq & C_8\sum_{j=1}^{\bar k}h_j\max_{i\in\{1,\ldots,N\}}\big\|\max_{\bar n\in\{0,
					\ldots,j-1\}}\big|Y_{\bar n}^{i, N, h}-X_{\bar n}^{i, N, h} \big|\big\|_{\mathscr{L}^{q}(\Omega)}
				\\
				&+\max_{i\in\{1,\ldots,N\}}\big\|\max_{\bar n\in\{0,\ldots,\bar k\}}\big|Y_{\bar n}^{i, N, h}-X_{\bar n}^{i, N, h} \big|\big\|_{\mathscr{L}^{q}(\Omega)}^{1/2}
				\\
				&\qquad \times C_9\Big(\sum_{j=1}^{\bar k}h_j\max_{i\in\{1,\ldots,N\}}\big\|\max_{\bar n\in\{0,\ldots,j-1\}}\big|Y_{\bar n}^{i, N, h}-X_{\bar n}^{i, N, h} \big|\big\|_{\mathscr{L}^{q}(\Omega)}\Big)^{1/2}+\big\|\mathcal R[Y^h]\big\|_{\mathscr G_{S,q}^{h}},
\end{align*}			
which due to Young's inequality yields
\begin{align*}
\max_{i\in\{1,\ldots,N\}}& \big\|\max_{\bar n\in\{0,\ldots, \bar k\}}\big|Y_{\bar n} 
				^{i,N,h}-X_{\bar n}^{i,N,h} \big|\big\|_{\mathscr{L}^{q}(\Omega)} 
				\leq \frac{1}{2}\max_{i\in\{1,\ldots,N\}}\big\|\max_{\bar n\in\{0,\ldots,\bar k\}}\big|Y_{\bar n}^{i, N, h}-X_{\bar n}^{i, N, h} \big|\big\|_{\mathscr{L}^{q}(\Omega)}
				\\
				& + \big(C_8+\frac{C_9^2}{2}\big)\sum_{j=1}^{\bar k}h_j\max_{i\in\{1,\ldots,N\}}\big\|\max_{\bar n\in\{0,\ldots,j-1\}}\big|Y_{\bar n}^{i, N, h}-X_{\bar n}^{i, N, h} \big|\big\|_{\mathscr{L}^{q}(\Omega)}+\big\|\mathcal R[Y^h]\big\|_{\mathscr G_{S,q}^{h}}. 
\end{align*}
This further implies
			\begin{align*}
				\max_{i\in\{1,\ldots,N\}}& \big\|\max_{\bar n\in\{0,\ldots,\bar k\}}\big|Y_{\bar n}^{i, N, h}-X_{\bar n}^{i, N, h} \big|\big\|_{\mathscr{L}^{q}(\Omega)} \leq  2\big\|\mathcal R[Y^h]\big\|_{\mathscr G_{S,q}^{h}} 
				\\
				& +(2C_8+C_9^2)\sum_{j=1}^{\bar k}h_j\max_{i\in\{1,\ldots,N\}}\big\|\max_{\bar n\in\{0,\ldots,j-1\}}\big|Y_{\bar n}^{i, N, h}-X_{\bar n}^{i, N, h} \big|\big\|_{\mathscr{L}^{q}(\Omega)}.
			\end{align*}		
			Due to Lemma \ref{lem:Gronwall}, we get 
			\begin{align*}
			 \max_{i\in\{1,\ldots,N\}}& \big\|\max_{\bar n\in\{0,\ldots,\bar k\}}\big|Y_{\bar n}^{i, N, h}-X_{\bar n}^{i, N, h} \big|\big\|_{\mathscr{L}^{q}(\Omega)}  \leq   2e^{(2C_8+C_9^2)T}\big\|\mathcal R[Y^h]\big\|_{\mathscr G_{S,q}^h},
			\end{align*}
for any $\bar{k}\in \{0,\ldots,n_h\}$, which further implies
\begin{align*}
\|Y^{h}-X^{h}\|_{\mathscr G_{q}^h} \leq   2e^{(2C_8+C_9^2)T}\big\|\mathcal R[Y^h]\big\|_{\mathscr G_{S,q}^h},
\end{align*}
and the proof is completed by combining the above with \eqref{eq:stable_residual}.
		\end{proof}		
			\subsection{Consistency of the  Scheme}
			\label{section:consistency}
Recall the temporal grid $\varrho_h$ from \eqref{eq:varho-timeGrid} and set the values of the interacting particle system \eqref{eq:interact} over the grid points  of $\varrho_h$ as	the set  $X^{\varrho_h}:=\{X^{i, N}_{j}\}_{j\in\{0,\ldots,n_h\}}$ where $X_j^{i,N}=X_{t_j}^{i,N}$ for any  $j\in \{0, \ldots, n_h\}$ and $i \in \{1,\ldots,N\}$. 
Notice that $X^{\varrho_h}$ is different from $X^h$, where the later stands for the randomised Milstein scheme \eqref{eq:scheme}.  
Further, $\mathcal R[X^{\varrho_h}]$ is defined using $X^{\varrho_h}$ in \eqref{eq:residual}. 
Next, we introduce the notion of a scheme's consistency and establish that the randomised Milstein scheme \eqref{eq:scheme} is consistent (see below Proposition \ref{prop:consistency}). 
 
		\begin{definition} [Consistency]
			The randomised Milstein scheme \eqref{eq:scheme} for the interacting particle system \eqref{eq:interact} is called consistent of order $\gamma>0$ if there exists a constant $C_{10}>0$, independent of $h$ and $N\in \mathbb{N}$, such that  
			\begin{align*}
				\|\mathcal R[X^{\varrho_h}]\|_{\mathscr G_{S,q}^h}\leq C_{10}h^\gamma,
			\end{align*}
			for $q\geq 2$. 
		\end{definition}

		In the context of the measure dependent drift coefficient, we obtain the following randomised quadrature rule.
		\begin{cor} \label{cor:randomised_quad}
		Let Assumptions \mbox{\normalfont H--\ref{asump:initial_cond}} with $\bar{p}\geq 4$ and set $q=\bar{p}/2\geq 2$, \mbox{\normalfont H--\ref{asump:lip}},   \mbox{\normalfont H-- \ref{asump:holder}}  hold.  Then, for all $i\in\{1,\ldots, N\}$,  the operator $\Theta_{\bar n, \eta}^{h}[\cdot]$ defined in \eqref{eq:Theata1} and applied to $b$ satisfies 
			\begin{align} \notag
				\big\|\max_{\bar n\in\{1,\ldots,n_{h}\}}\big|\Theta_{\bar n, \eta}^{h}[b]-\int_0^{t_{\bar n}}b(s,X_s^{i,N},\mu_s^{X,N})ds\big|\big\|_{\mathscr{L}^{q}(\Omega)}\leq C_{11} h,
			\end{align}
		  where $C_{11}:= \sqrt{T} \textcolor{black}{\bar{C}_q}  \big(2\bar L+4\bar LC_1^{1/\bar p}+2\textcolor{black}{\bar L}C_2^{1/\bar p}\big)\big(1+\max_{i\in\{1,\ldots,N\}}\|X_0^i\|^{\bar{p}}_{{\mathscr L^{\bar{p}}(\tilde \Omega)}}\big)^{1/\bar{p}}$ and the positive constants $C_1$, $C_2$, \textcolor{black}{$\bar{C}_q$} appear in  Proposition \ref{prop:sde_bound}, Lemma \ref{lem:dif_sde} and \textcolor{black}{Lemma \ref{lem:Burkholder}}, respectively.
		\end{cor}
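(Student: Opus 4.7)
The plan is to apply the randomised quadrature estimate \eqref{eq:quad} with $V(s) := b(s, X_s^{i,N}, \mu_s^{X,N})$, Hölder exponent $\alpha = 1/2$, and exponent $p = q$, which yields a bound of the form $\bar{C}_q \sqrt{T} \|V\|_{\mathscr{C}^{1/2}([0,T],\mathscr{L}^q(\tilde\Omega))} h$. This reduces everything to verifying that $V$ lies in $\mathscr{C}^{1/2}([0,T],\mathscr{L}^q(\tilde\Omega))$ and to producing an explicit bound on its Hölder norm in the shape prescribed by $C_{11}$.

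First, I will control the sup-part of the norm \eqref{eq:norm:hold}. Using Remark~\ref{rem:linear} I obtain pointwise
\[
|b(t, X_t^{i,N}, \mu_t^{X,N})| \leq \bar L\bigl(1 + |X_t^{i,N}| + \mathcal{W}_2(\mu_t^{X,N}, \delta_0)\bigr),
\]
so that by Minkowski, together with $\mathcal{W}_2(\mu_t^{X,N}, \delta_0) \leq (\tfrac{1}{N}\sum_k |X_t^{k,N}|^2)^{1/2}$ and the uniform-in-$N$ moment bound in Remark~\ref{rem:interact_bound} (which supplies the $C_1^{1/\bar p}$ factor since $q = \bar p/2 \leq \bar p$), the quantity $\sup_t \|V(t)\|_{\mathscr{L}^q(\tilde\Omega)}$ is bounded by a multiple of $\bar L (1 + C_1^{1/\bar p})(1 + \max_i \|X_0^i\|_{\mathscr{L}^{\bar p}(\tilde\Omega)}^{\bar p})^{1/\bar p}$.

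Next, for the Hölder seminorm of $V$, I split
\begin{align*}
\|V(t)-V(t')\|_{\mathscr{L}^q} \leq \; & \|b(t, X_t^{i,N}, \mu_t^{X,N}) - b(t', X_t^{i,N}, \mu_t^{X,N})\|_{\mathscr{L}^q} \\
& + \|b(t', X_t^{i,N}, \mu_t^{X,N}) - b(t', X_{t'}^{i,N}, \mu_{t'}^{X,N})\|_{\mathscr{L}^q}.
\end{align*}
Assumption \mbox{\normalfont H--\ref{asump:holder}} controls the first summand by $L(1+|X_t^{i,N}|+\mathcal W_2(\mu_t^{X,N},\delta_0))|t-t'|^{1/2}$, which after taking $\mathscr L^q$-norms is handled exactly by the moment bound from Remark~\ref{rem:interact_bound} as above. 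Assumption \mbox{\normalfont H--\ref{asump:lip}} bounds the second summand by $L(|X_t^{i,N}-X_{t'}^{i,N}|+\mathcal W_2(\mu_t^{X,N},\mu_{t'}^{X,N}))$, where $\mathcal W_2(\mu_t^{X,N},\mu_{t'}^{X,N}) \leq (\tfrac{1}{N}\sum_k|X_t^{k,N}-X_{t'}^{k,N}|^2)^{1/2}$; Lemma~\ref{lem:dif_sde} then gives the $|t-t'|^{1/2}$ decay with a constant involving $C_2^{1/\bar p}$ uniformly in $i$ and $N$. Dividing by $|t-t'|^{1/2}$ and taking the supremum contributes the $C_2^{1/\bar p}$ terms to $\|V\|_{\mathscr{C}^{1/2}}$.

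Collecting the sup-part and Hölder-part bounds yields $\|V\|_{\mathscr{C}^{1/2}([0,T],\mathscr{L}^q(\tilde\Omega))} \leq (2\bar L + 4\bar L C_1^{1/\bar p} + 2\bar L C_2^{1/\bar p})(1 + \max_i \|X_0^i\|_{\mathscr{L}^{\bar p}(\tilde\Omega)}^{\bar p})^{1/\bar p}$, and inserting this into \eqref{eq:quad} (with $\alpha = 1/2$, producing $h^{\alpha+1/2} = h$) gives exactly $C_{11} h$. The main technical care needed is merely the handling of the empirical-measure term uniformly in $N$: both the sup-moment and the time-increment bound from Remark~\ref{rem:interact_bound} and Lemma~\ref{lem:dif_sde} are already $N$-independent, so no further work is needed there; the only subtlety is invoking $q \leq \bar p$ and Jensen/Minkowski to transfer the $\bar p$-level bounds to the $q$-level norm appearing on the left-hand side.
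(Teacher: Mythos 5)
Your proposal is correct and follows essentially the same route as the paper: take $V(s)=b(s,X_s^{i,N},\mu_s^{X,N})$, establish that $V\in\mathscr{C}^{1/2}([0,T],\mathscr{L}^q(\tilde\Omega))$ using Remark~\ref{rem:linear}, Remark~\ref{rem:interact_bound} and Lemma~\ref{lem:dif_sde} together with the standard $\mathcal{W}_2$-to-empirical-sum estimate, and then invoke \eqref{eq:quad} with $\alpha=1/2$. The decomposition of the increment into a time part (controlled by H--\ref{asump:holder}) and a state/measure part (controlled by H--\ref{asump:lip} and Lemma~\ref{lem:dif_sde}) and the resulting constant match the paper's argument.
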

		\begin{proof}  
			We take $V(s)=b\big(s,X_s^{i,N},\mu_s^{X,N}\big)$ for any $s\in [0,T]$ and $i \in \{1,\ldots,N\}$, which is defined on $(\tilde{\Omega}, \tilde{\mathscr{F}}, \tilde{\mathbb{P}})$.
			Notice that Remarks \ref{rem:linear}  and  \ref{rem:interact_bound} along with H\"older's inequality (as $q=\bar{p}/2$) imply
			\begin{align*}
				\|V\|_{\mathscr{L}^{q}([0,T]\times \tilde{\Omega})} &=\Big( \int_0^T \|b\big(s,X_s^{i,N},\mu_s^{X,N}\big)\|_{\mathscr{L}^{q}( \tilde{\Omega})}^{q}ds \Big)^{1/q}
				\\
				&\leq \bar L  \Big(\int_0^T \big(1+\|X_s^{i,N}\|_{\mathscr{L}^{q}( \tilde{\Omega})}+\|\mathcal{W}_2(\mu_s^{X,N}, \delta_0)\|_{\mathscr{L}^{q}( \tilde{\Omega})}\big)^{q} ds\Big)^{1/q}
				\\
				& \leq \bar L \Big(\int_0^T (1+2\max_{i \in \{1,\ldots,N\}}\|X_s^{i,N}\|_{\mathscr{L}^{q}(\tilde\Omega)})^{q} ds\Big)^{1/q}
				\\
				& \leq \bar L \Big(\int_0^T (1+2\max_{i \in \{1,\ldots,N\}}\|X_s^{i,N}\|_{\mathscr{L}^{\bar{p}}(\tilde\Omega)})^{q} ds\Big)^{1/q}
				\\
				& \leq \bar L \Big(\int_0^T (1+2C_1^{1/\bar{p}}(1+\max_{i\in\{1,\ldots,N \}}\|X_0^{i}\|^{\bar{p}}_{\mathscr{L}^{\bar{p}}(\tilde\Omega)})^{1/\bar{p}})^{q} ds\Big)^{1/q}
				\\
				&\leq \bar{L} ( T)^{1/q} (1+2C_1^{1/\bar{p}}(1+\max_{i\in\{1,\ldots,N \}}\|X_0^{i}\|^{\bar{p}}_{\mathscr{L}^{\bar{p}}(\tilde\Omega)})^{1/\bar{p}}) <\infty. 
			\end{align*}
			Also, due to Assumptions H--\ref{asump:lip}, H--\ref{asump:holder} and Lemma \ref{lem:dif_sde}, for $t, t'\in[0, T]$,
			\begin{align}
				&\big\|V(t)-V(t')\big\|_{\mathscr{L}^{q}(\tilde\Omega)}
				=\big\|b(t,X_t^{i,N},\mu_t^{X,N})-b(t',X_{t'}^{i,N},\mu_{t'}^{X,N})\big\|_{\mathscr{L}^{q}(\tilde\Omega)}
				\leq L \big\{\big\|X_t^{i, N}-X_{t'}^{i, N}\big\|_{\mathscr{L}^{q}(\tilde\Omega)}\notag
				\\
				&\quad +\big\|\mathcal{W}_2(\mu_t^{X, N}, \mu_{t'}^{X, N})\big\|_{\mathscr{L}^{q}(\tilde\Omega)}+|t-t'|^{1/2}\big(1+\big\|X^{i,N}_t\big\|_{\mathscr{L}^{q}(\tilde\Omega)}+ \big\|\mathcal{W}_2(\mu_t^{X, N},\delta_0)\big\|_{\mathscr{L}^{q}(\tilde\Omega)}\big) \}\notag
				\\
				&\leq 2L\max_{i\in\{1,\ldots,N\}}\big\|X_t^{i, N}-X_{t'}^{i, N}\big\|_{\mathscr{L}^{q}(\tilde\Omega)} +  |t-t'|^{1/2} L\{1+2C_1^{1/\bar{p}}(1+\max_{i\in\{1,\ldots,N\}}\|X_0^i\|^{\bar{p}}_{_{\mathscr L^{\bar{p}}(\tilde \Omega)}})^{1/\bar{p}}\} \notag
				\\
				&\leq \{2L(C_1^{1/\bar{p}}+C_2^{1/\bar{p}})(1+\max_{i\in\{1,\ldots,N\}}\|X_0^i\|^{\bar{p}}_{{\mathscr L^{\bar{p}}(\tilde \Omega)}})^{1/\bar{p}}+L\}|t-t'|^{1/2}, \label{eq:b11}
			\end{align}
			for all $i\in\{1,\ldots, N\}$. Due to \eqref{eq:norm:hold},  
			$\|V\|_{\mathscr{C}^{1/2}([0,T],\,\mathscr{L}^{q}(\tilde\Omega))}\leq 2\bar L+\bar L (4C_1^{1/\bar{p}}+2C_2^{1/\bar{p}}\big)\big(1+\max_{i\in\{1,\ldots,N\}}\|X_0^i\|^{\bar{p}}_{{\mathscr L^{\bar{p}}(\tilde \Omega)}}\big)^{1/\bar{p}}$, as $L\leq \bar L$. 
			Then, \eqref{eq:quad} completes the proof. 
		\end{proof}
			\begin{lem}\label{lem:Zeta^1}
	Let Assumptions \mbox{\normalfont H--\ref{asump:initial_cond}} with $\bar{p}\geq 4$ and set $q=\bar{p}/2\geq 2$, \mbox{\normalfont{H--\ref{asump:lip}}}   to \mbox{\normalfont{H--\ref{asump:derv_lip}}}  hold. Let $h\leq \min(1,T)$. Then,
	\begin{align*}
		&\big\|\max_{\bar n\in\{1,\ldots, n_h\}}\big|\sum_{ j=1}^{\bar n}\sum_{\ell=1}^{m_1}\int_{t_{j-1}}^{t_j}\big[\sigma_1^{\ell}(s, X_s^{i, N},\mu_s^{X, N})-\tilde{\sigma}_1^{\ell}(s,t_{j-1}, X_{j-1}^{i, N},\mu_{j-1}^{X, N})\big]dW^{i,\ell}_s\big\|_{\mathscr{L}^{q}(\tilde\Omega)}
		\\
		&+\big\|\max_{\bar n\in\{1,\ldots, n_h\}}\big|\sum_{ j=1}^{\bar n}\sum_{\ell=1}^{m_0}\int_{t_{j-1}}^{t_j}\big[\sigma_0^{\ell}(s, X_s^{i, N},\mu_s^{X, N})-\tilde{\sigma}_0^{\ell}(s,t_{j-1}, X_{j-1}^{i, N},\mu_{j-1}^{X, N})\big]dW^{0,\ell}_s\big|\big\|_{\mathscr{L}^{q}(\tilde\Omega)}
		\\
		&\hspace{4cm}\leq  C_{12} h,
	\end{align*}
	for all $i\in\{1,\ldots, N\}$, where 
	\begin{align*}
C_{12}&:=(m_0+m_1) \bar C_{q} \sqrt{{Tq(q-1)}/{2}}\Big\{L+2 \bar L^2+4L^2\sqrt{{q(q-1)}/{2}}
	+\Big(2(L+2\bar L^2)C_1^{1/\bar p}
	\\
	&
	\qquad +8L^2
   \sqrt{{q(q-1)}{2}}\big(C_1^{1/\bar p}+C_2^{1/\bar p}\big)
	+4LC_2^{2/\bar p}\Big)
	\big(1+\max_{i\in\{1,\ldots,N\}}\|X_0^i\|_{_{\mathscr L^{\bar p}(\tilde \Omega)}}^{\bar p}\big)^{2/\bar p} \Big\}
	\end{align*}
	and  the positive constants $C_1$, $C_2$ and $\bar C_q$ appear in  Proposition \ref{prop:sde_bound}, Lemma \ref{lem:dif_sde}  and Lemma \ref{lem:Burkholder}, respectively.
\end{lem}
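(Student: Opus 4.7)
The natural path is to Itô-expand $\sigma_u^\ell(s, X_s^{i,N}, \mu_s^{X,N})$ around the left endpoint $t_{j-1}$ on each sub-interval $[t_{j-1}, t_j]$ and to recognise the leading-order martingale contributions as exactly the $\Lambda$ terms appearing in $\tilde\sigma_u^\ell$. Splitting off the $t$-dependence first via H--\ref{asump:holder} to remove the technical $C^1$-in-time obstruction, and then applying the McKean--Vlasov Itô formula in $(x,\mu)$ (whose use is justified by H--\ref{asump:derv_lip} and H--\ref{asump:derv_lip*}, with a standard mollification/limit step if needed), one obtains a decomposition
\begin{align*}
\sigma_u^\ell(s, X_s^{i,N}, \mu_s^{X,N}) - \sigma_u^\ell(t_{j-1}, X_{j-1}^{i,N}, \mu_{j-1}^{X,N}) = R_j^{u,\ell}(s) + M_j^{u,\ell,\mathrm{id}}(s) + M_j^{u,\ell,\mathrm{com}}(s),
\end{align*}
where $R_j^{u,\ell}$ collects all bounded-variation contributions (time-Hölder remainder from H--\ref{asump:holder}, drift terms controlled via Remark \ref{rem:linear}, Remark \ref{rem:derv_bound} and Remark \ref{rem:interact_bound}), while $M_j^{u,\ell,\mathrm{id}}$ and $M_j^{u,\ell,\mathrm{com}}$ are the stochastic integrals of $\partial_x\sigma_u^\ell\sigma_v^{\ell_1}$ and $\tfrac{1}{N}\sum_k\partial_\mu\sigma_u^\ell(\cdot,X^{k,N}_\cdot)\sigma_v^{\ell_1}(\cdot,X^{k,N}_\cdot,\cdot)$ evaluated along the true interacting particle trajectories, against $dW^{i,\ell_1}$ and $dW^{0,\ell_1}$ respectively for $v\in\{0,1\}$.

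With this in hand, $\tilde\sigma_u^\ell(s,t_{j-1},X_{j-1}^{i,N},\mu_{j-1}^{X,N})$ is exactly the frozen-coefficient version of $\sigma_u^\ell(t_{j-1},\cdot)+M_j^{u,\ell,\mathrm{id}}+M_j^{u,\ell,\mathrm{com}}$, so the integrand of interest becomes $R_j^{u,\ell}(s)$ plus two stochastic integrals of \emph{differences} of the form $[\partial_x\sigma_u^\ell\sigma_v^{\ell_1}](r,X_r^{i,N},\mu_r^{X,N}) - [\partial_x\sigma_u^\ell\sigma_v^{\ell_1}](t_{j-1},X_{j-1}^{i,N},\mu_{j-1}^{X,N})$ and its Lions analogue with $\partial_\mu$. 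Assumption H--\ref{asump:derv_lip*}, together with the time-regularity Lemma \ref{lem:dif_sde}, the moment bound Remark \ref{rem:interact_bound}, and the standard empirical-measure inequality $\|\mathcal W_2(\mu_r^{X,N},\mu_{t_{j-1}}^{X,N})\|_{\mathscr L^q}\leq \max_i\|X_r^{i,N}-X_{j-1}^{i,N}\|_{\mathscr L^q}$, forces each such integrand to have an $\mathscr L^q$-norm of order $(r-t_{j-1})^{1/2}$. A single application of the BDG inequality (Lemma \ref{lem:Burkholder}) on $[t_{j-1},s]$ then yields the inner-scale estimate $\|\sigma_u^\ell(s,X_s,\mu_s)-\tilde\sigma_u^\ell(s,t_{j-1},X_{j-1},\mu_{j-1})\|_{\mathscr L^q}\leq C(s-t_{j-1})$, uniformly in $i,N,j$.

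The proof is concluded by an outer BDG step: since $\{\sum_{j=1}^{\bar n}\int_{t_{j-1}}^{t_j}[\cdots]\,dW_s^{i,\ell}\}_{\bar n}$ is a discrete $\{\mathscr F_{\bar n}^h\}$-martingale (and likewise with $W^{0,\ell}$), Lemma \ref{lem:Burkholder} followed by Theorem 7.1 of \cite{Mao2008} gives
\begin{align*}
\big\|\max_{\bar n}\big|\sum_{j=1}^{\bar n}\int_{t_{j-1}}^{t_j}[\cdots]\,dW_s^{i,\ell}\big|\big\|_{\mathscr L^q}
\leq \bar C_q\Big(\sum_j h_j^{(q-2)/q}\Big(\int_{t_{j-1}}^{t_j}C^q(s-t_{j-1})^q\,ds\Big)^{2/q}\Big)^{1/2},
\end{align*}
and the inner integral contributes a factor $h_j^{q+1}$, so the outer sum is at most $C(\sum_j h_j^3)^{1/2}\leq C\sqrt T\, h$. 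Bookkeeping the constants from H--\ref{asump:lip}, H--\ref{asump:derv_lip*}, Proposition \ref{prop:sde_bound}, Lemma \ref{lem:dif_sde}, and Lemma \ref{lem:Burkholder} reproduces $C_{12}$, and the identical reasoning with $(m_1,W^{i,\ell})$ replaced by $(m_0,W^{0,\ell})$ where appropriate handles the second sum.

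The main obstacle will be the $N$-uniformity of the Lions contribution: the $\tfrac{1}{N}\sum_k$ in $\Lambda_{\sigma_u\sigma_v}^\ell$ involves all particles, so I must check that Assumption H--\ref{asump:derv_lip*} applied particle-wise (for each $k$, with $(x,y)$ taken as $(X_r^{i,N},X_r^{k,N})$ and $(x',y')$ as $(X_{j-1}^{i,N},X_{j-1}^{k,N})$) produces a bound on the averaged quantity with no $N$-dependent constant. This is the very point at which the maximum-over-$i$ convention of the Spijker-type norms is crucial, and is why the analysis cannot be reduced to a direct appeal to the classical SDE results of \cite{Kruse2019}.
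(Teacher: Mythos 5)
Your proposal reaches the right estimate but follows a route that is meaningfully different from the paper's, and this difference creates a gap with respect to the lemma's stated hypotheses. The paper does not apply any It\^o formula to $\sigma_u^\ell$. Instead it uses the deterministic first-order Taylor estimate Lemma \ref{lem:mvt}, which only needs Lipschitz continuity of $\partial_x\sigma_u^\ell$ and $\partial_\mu\sigma_u^\ell$ (Assumption H--\ref{asump:derv_lip}), to write
\begin{align*}
\sigma_1^{\ell}(t_{j-1}, X_s^{i, N},\mu_s^{X, N})-\sigma_1^{\ell}(t_{j-1}, X_{j-1}^{i, N},\mu_{j-1}^{X, N})
= \partial_x\sigma_1^{\ell}(t_{j-1}, X_{j-1}^{i, N},\mu_{j-1}^{X, N})(X_s^{i,N}-X_{j-1}^{i,N}) \qquad\qquad
\\
+\tfrac{1}{N}\textstyle\sum_{k}\partial_\mu\sigma_1^{\ell}(t_{j-1}, X_{j-1}^{i, N},\mu_{j-1}^{X, N}, X_{j-1}^{k, N})(X_s^{k,N}-X_{j-1}^{k,N})+\mathrm{(quadratic\ remainder)} .
\end{align*}
Crucially the derivative factors are \emph{frozen} at $(t_{j-1},X_{j-1},\mu_{j-1})$. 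Substituting the SDE for $X_s-X_{j-1}$ and cancelling against $\Lambda^\ell_{\sigma_1\sigma_v}$ (whose integrand is the same frozen derivative times the frozen $\sigma_v$) leaves stochastic integrals of $\partial_x\sigma_1^\ell(\mathrm{frozen})\,[\sigma_v(r,X_r,\mu_r)-\sigma_v(t_{j-1},X_{j-1},\mu_{j-1})]$, which are bounded using only the uniform bound on $\partial_x\sigma$ (Remark \ref{rem:derv_bound}) together with H--\ref{asump:lip}, H--\ref{asump:holder}, and the time-regularity Lemma \ref{lem:dif_sde}.

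Your It\^o-expansion route, by contrast, naturally produces stochastic integrands of the form $\partial_x\sigma_u^\ell(t_{j-1},X_r,\mu_r)\sigma_v^{\ell_1}(r,X_r,\mu_r)$ with the derivative evaluated \emph{along the path}, so cancelling $\Lambda$ leaves differences of the full product $\partial_x\sigma_u^\ell\,\sigma_v^{\ell_1}$. You then invoke Assumption H--\ref{asump:derv_lip*} to Lipschitz-bound these differences --- but H--\ref{asump:derv_lip*} is not among the hypotheses of Lemma \ref{lem:Zeta^1} (the lemma is stated under H--\ref{asump:initial_cond} to H--\ref{asump:derv_lip} only). This is a genuine gap: either you need to decompose further, writing $\partial_x\sigma_u^\ell(t_{j-1},X_r,\mu_r)\sigma_v(r,X_r,\mu_r)-\partial_x\sigma_u^\ell(t_{j-1},X_{j-1},\mu_{j-1})\sigma_v(t_{j-1},X_{j-1},\mu_{j-1})$ as $[\partial_x\sigma_u^\ell(\ldots,X_r,\mu_r)-\partial_x\sigma_u^\ell(\ldots,X_{j-1},\mu_{j-1})]\sigma_v(r,X_r,\mu_r)+\partial_x\sigma_u^\ell(\mathrm{frozen})[\sigma_v(r,\ldots)-\sigma_v(t_{j-1},\ldots)]$ and control the first term with H--\ref{asump:derv_lip} plus H\"older's inequality at the cost of additional moment bookkeeping, or you must acknowledge that your proof requires the stronger hypothesis H--\ref{asump:derv_lip*}. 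In addition, applying a McKean--Vlasov It\^o formula to $\sigma_u^\ell$ needs second-order $(x,\mu)$-regularity, which the assumptions do not provide; you flag a mollification step, but this is a nontrivial detour that the paper's Lemma \ref{lem:mvt} route avoids entirely. The outer discrete-BDG step and the $h_j^3$-summation arithmetic in your proposal match the paper and are correct.
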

\begin{proof}
	For notational simplicity, define \[\zeta^{\ell}(s):=\sigma_1^{\ell}(s, X_s^{i, N},\mu_s^{X, N})-\tilde{\sigma}_1^{\ell}(s,t_{j-1}, X_{j-1}^{i, N},\mu_{j-1}^{X, N})\] for all $\ell\in\{1,\ldots,m_1\}$, $i\in\{1,\ldots, N\}$, $j \in \{1,\ldots,n_h\}$ and $s\in[t_{j-1},t_j]$. 
	Notice  that 
	$$\Big\{\displaystyle \sum_{j=1}^{\bar n}\sum_{\ell=1}^{m_1} \int_{t_j-1}^{t_j}\zeta^{\ell}(s)dW^{i,\ell}_s \Big\}_{\bar{n}\in \{0,\ldots,n_h\}}$$
	 	is  an $\{\tilde{\mathscr F}_{t_{\bar n}}\}_{\bar n\in\{0,\ldots,n_h\}}$-adapted standard martingale.    
	Thus, by using Lemma \ref{lem:Burkholder} and Theorem 7.1 in \cite{Mao2008}, we get
	\begin{align}
		\big\|&\max_{\bar n\in\{0,\ldots, n_h\}}\big|\sum_{ j=1}^{\bar n}\sum_{\ell=1}^{m_1}\int_{t_{j-1}}^{t_j}\zeta^{\ell}(s)dW^{i,\ell}_s\big|\big\|_{\mathscr{L}^{q}(\tilde\Omega)}
		\leq \bar C_{q}\big\|\big(\sum_{ j=1}^{ n_h}\big|\sum_{\ell=1}^{m_1}\int_{t_{j-1}}^{t_j}\zeta^{\ell}(s)dW^{i,\ell}_s\big|^2\big)^{1/2}\big\|_{\mathscr{L}^{q}(\tilde\Omega)} \notag
		\\
		&= \bar C_{q} \big\|\sum_{ j=1}^{ n_h}\big|\sum_{\ell=1}^{m_1}\int_{t_{j-1}}^{t_j}\zeta^{\ell}(s)dW^{i,\ell}_s\big|^2\big\|_{\mathscr{L}^{q/2}(\tilde\Omega)}^{1/2}  
		\leq \sqrt m_1 \bar C_{q} \Big(\sum_{ j=1}^{ n_h}\sum_{\ell=1}^{m_1}\big\|\int_{t_{j-1}}^{t_j}\zeta^{\ell}(s)dW^{i,\ell}_s\big\|^2_{\mathscr{L}^{q}(\tilde\Omega)}\Big)^{1/2} \notag
		\\
		&\leq \sqrt m_1 \bar C_{q} \Big(\frac{q(q-1)}{2}\Big)^{1/2} \Big(\sum_{ j=1}^{ n_h}\sum_{\ell=1}^{m_1} h_j^{(q-2)/{q}}\big(\int_{t_{j-1}}^{t_j}\big\| \zeta^{\ell}(s)\big\|^{q}_{\mathscr{L}^{q}(\tilde\Omega)} ds\big)^{2/q}\Big)^{1/2} , \label{eq:fin}
	\end{align}
	for all $i\in\{1,\ldots, N\}$. 
Now, recall $\tilde{\sigma}_1^{\ell}$ from \eqref{eq:sigma_tilde} to write the following for any $s\in [t_{j-1}, t_j]$, 
	\begin{align*} 
		\zeta^{\ell}(s)=&\big[\sigma_1^{\ell}(s, X_s^{i, N},\mu_s^{X, N})-\sigma_1^{\ell}(t_{j-1}, X_s^{i, N},\mu_s^{X, N})\big] +\big[\sigma_1^{\ell}(t_{j-1}, X_s^{i, N},\mu_s^{X, N}) \notag
		\\
		&-\sigma_1^{\ell}(t_{j-1}, X_{j-1}^{i, N},\mu_{j-1}^{X, N})-\partial_x\sigma_1^{\ell}(t_{j-1}, X_{j-1}^{i, N},\mu_{j-1}^{X, N})(X_s^{i,N}-X_{j-1}^{i,N})\notag
		\\
		&-\frac{1}{N}\sum_{k= 1}^{N}\partial_\mu\sigma_1^{\ell}(t_{j-1}, X_{j-1}^{i, N},\mu_{j-1}^{X, N}, X_{j-1}^{k, N})(X_s^{k,N}-X_{j-1}^{k,N})\big] +\big[\partial_x\sigma_1^{\ell}(t_{j-1}, X_{j-1}^{i, N},\mu_{j-1}^{X, N})\notag
		\\
		&\times(X_s^{i,N}-X_{j-1}^{i,N})+\frac{1}{N}\sum_{k= 1}^{N}\partial_\mu\sigma_1^{\ell}(t_{j-1}, X_{j-1}^{i, N},\mu_{j-1}^{X, N}, X_{j-1}^{k, N})(X_s^{k,N}-X_{j-1}^{k,N})\notag
		\\
		&\hspace{1cm}-\Lambda_{\sigma_1\sigma_1}^\ell(s,t_{j-1}, X_{j-1}^{i, N},\mu_{j-1}^{X, N})-\Lambda_{\sigma_1\sigma_0}^\ell(s,t_{j-1}, X_{j-1}^{i, N},\mu_{j-1}^{X, N})\big], \notag
	\end{align*}
	which on the application of Assumption H--\ref{asump:holder}, Lemma \ref{lem:mvt} along with  \eqref{eq:interact} and the values of $\Lambda_{\sigma_1\sigma_1}^\ell$, $\Lambda_{\sigma_1\sigma_0}^\ell$ from \eqref{eq:lamb}, \eqref{eq:lamb*} yields 
	\begin{align*}
		\zeta^{\ell}&(s)\leq   L (1+|X_s^{i, N}|+\mathcal{W}_2(\mu_s^{X, N}, \delta_0))|s-t_{j-1}|+\frac{3L}{2}|X_s^{i, N}-X_{j-1}^{i, N}|^2
		\\
		&+ \frac{5L}{2}\frac{1}{N} \sum_{k=1}^N  |X_s^{k, N}-X_{j-1}^{k, N}|^2 + \int_{t_{j-1}}^s\partial_x\sigma_1^{\ell}(t_{j-1}, X_{j-1}^{i, N},\mu_{j-1}^{X, N})b(r,X_r^{i,N},\mu_r^{X,N})dr\notag
		\\
		&+\int_{t_{j-1}}^s\partial_x\sigma_1^{\ell}(t_{j-1}, X_{j-1}^{i, N},\mu_{j-1}^{X, N})(\sigma_1(r, X_{r}^{i, N},\mu_{r}^{X, N})-\sigma_1(t_{j-1}, X_{j-1}^{i, N},\mu_{j-1}^{X, N}))dW^i_r\notag
		\\
		&+\int_{t_{j-1}}^s\partial_x\sigma_1^{\ell}(t_{j-1}, X_{j-1}^{i, N},\mu_{j-1}^{X, N})(\sigma_0(r, X_{r}^{i, N},\mu_{r}^{X, N})-\sigma_0(t_{j-1}, X_{j-1}^{i, N},\mu_{j-1}^{X, N}))dW^{0}_r\notag
		\\
		&+\frac{1}{N}\sum_{k= 1}^{N}\int_{t_{j-1}}^s\partial_\mu\sigma_1^{\ell}(t_{j-1}, X_{j-1}^{i, N},\mu_{j-1}^{X, N}, X_{j-1}^{k, N})b(r,X_r^{k,N},\mu_r^{X,N})dr\notag
		\\
		&+\frac{1}{N}\sum_{k= 1}^{N}\int_{t_{j-1}}^s\partial_\mu\sigma_1^{\ell}(t_{j-1}, X_{j-1}^{i, N},\mu_{j-1}^{X, N}, X_{j-1}^{k, N})\notag
		\\
		&\hspace{3cm}\times(\sigma_1(r, X_{r}^{k, N},\mu_{r}^{X, N})-\sigma_1(t_{j-1}, X_{j-1}^{k, N},\mu_{j-1}^{X, N}))dW^{k}_r\notag
		\\
		&+\frac{1}{N}\sum_{k= 1}^{N}\int_{t_{j-1}}^s\partial_\mu\sigma_1^{\ell}(t_{j-1}, X_{j-1}^{i, N},\mu_{j-1}^{X, N}, X_{j-1}^{k, N})\notag
		\\
		&\hspace{3cm}\times(\sigma_0(r, X_{r}^{k, N},\mu_{r}^{X, N})-\sigma_0(t_{j-1}, X_{j-1}^{k, N},\mu_{j-1}^{X, N}))dW^{0}_r\notag,
	\end{align*}
	for all $i \in\{1,\ldots,N\}$, $j\in\{1,\ldots,n_h\}$ and $\ell\in\{1,\ldots,m_1\}$. This further  implies, due to Remarks \ref{rem:linear} and \ref{rem:derv_bound},
	\begin{align}
		\|&\zeta^{\ell}(s) \|_{\mathscr{L}^{q}(\tilde\Omega)}\leq   L (1+2\max_{i\in\{1,\ldots,N\}}\|X_s^{i, N}\|_{\mathscr{L}^{q}(\tilde\Omega)})h_{j}+4L\max_{i\in\{1,\ldots,N\}}\|X_s^{i, N}-X_{j-1}^{i, N}\|_{\mathscr{L}^{2q}(\tilde\Omega)}^2 \notag
		\\
		&+ h_{j}^{(q-1)/{q}}2\bar L^2\big( \int_{t_{j-1}}^s (1+2\max_{i\in\{1,\ldots,N\}}\|X_r^{i,N}\|_{\mathscr{L}^{q}(\tilde\Omega)})^{q}dr \big)^{1/q}\notag
		\\
		&+\big\|\int_{t_{j-1}}^s\partial_x\sigma_1^{\ell}(t_{j-1}, X_{j-1}^{i, N},\mu_{j-1}^{X, N})(\sigma_1(r, X_{r}^{i, N},\mu_{r}^{X, N})-\sigma_1(t_{j-1}, X_{j-1}^{i, N},\mu_{j-1}^{X, N}))dW^{i}_r  \big\|_{\mathscr{L}^{q}(\tilde\Omega)}\notag
		\\
		&+\big\|\int_{t_{j-1}}^s\partial_x\sigma_1^{\ell}(t_{j-1}, X_{j-1}^{i, N},\mu_{j-1}^{X, N})(\sigma_0(r, X_{r}^{i, N},\mu_{r}^{X, N})-\sigma_0(t_{j-1}, X_{j-1}^{i, N},\mu_{j-1}^{X, N}))dW^{0}_r \big\|_{\mathscr{L}^{q}(\tilde\Omega)}\notag
		\\
		&+\big\|\frac{1}{N}\sum_{k= 1}^{N}\int_{t_{j-1}}^s\partial_\mu\sigma_1^{\ell}(t_{j-1}, X_{j-1}^{i, N},\mu_{j-1}^{X, N}, X_{j-1}^{k, N})\notag
		\\
		&\hspace{3cm}\times(\sigma_1(r, X_{r}^{k, N},\mu_{r}^{X, N})-\sigma_1(t_{j-1}, X_{j-1}^{k, N},\mu_{j-1}^{X, N}))dW^{k}_r \big\|_{\mathscr{L}^{q}(\tilde\Omega)}\notag
		\\
		&+\big\|\frac{1}{N}\sum_{k= 1}^{N}\int_{t_{j-1}}^s\partial_\mu\sigma_1^{\ell}(t_{j-1}, X_{j-1}^{i, N},\mu_{j-1}^{X, N}, X_{j-1}^{k, N})\notag
		\\
		&\hspace{3cm}\times(\sigma_0(r, X_{r}^{k, N},\mu_{r}^{X, N})-\sigma_0(t_{j-1}, X_{j-1}^{k, N},\mu_{j-1}^{X, N}))dW^{0}_r \big\|_{\mathscr{L}^{q}(\tilde\Omega)}. \notag
	\end{align}
	Moreover, the application of H\"older's inequality (as $q=\bar{p}/2)$,  Remarks \ref{rem:interact_bound} and \ref{rem:derv_bound}, Lemma \ref{lem:dif_sde}  and Theorem 7.1 in \cite{Mao2008}    yield
	\begin{align}
		\|&\zeta^{\ell}(s) \|_{\mathscr{L}^{q}(\tilde\Omega)}\leq   \big\{(L+2\bar L^2)\big(1+2C_1^{1/\bar p}(1+\max_{i\in\{1,\ldots,N\}}\|X_0^{i, N}\|_{\mathscr{L}^{\bar p}(\tilde\Omega)}^{\bar p})^{1/\bar p}\big)\notag
		\\
		&+4LC_2^{2/\bar p}\big(1+\max_{i\in\{1,\ldots,N\}}\|X_0^{i, N}\|_{\mathscr{L}^{\bar p}(\tilde\Omega)}^{\bar p}\big)^{2/\bar p}\big\}h_j \notag
		\\
		&+\Big(\frac{q(q-1)}{2}\Big)^{1/2} Lh_j^{(q-2)/{2q}}\sum_{u=0}^{1}\big(\int_{t_{j-1}}^s\big\|\sigma_u(r, X_{r}^{i, N},\mu_{r}^{X, N})-\sigma_u(t_{j-1}, X_{j-1}^{i, N},\mu_{j-1}^{X, N})\big\|_{\mathscr{L}^{q}(\tilde\Omega)}^{q}dr\big)^{1/{q}}  \notag
		\\
		&+\Big(\frac{q(q-1)}{2}\Big)^{1/2} Lh_j^{(q-2)/{2q}}\sum_{u=0}^{1}\frac{1}{N}\sum_{k= 1}^{N}\big(\int_{t_{j-1}}^s \big\|\sigma_u(r, X_{r}^{k, N},\mu_{r}^{X, N})-\sigma_u(t_{j-1}, X_{j-1}^{k, N},\mu_{j-1}^{X, N})\big\|_{\mathscr{L}^{q}(\tilde\Omega)}^{q}dr\big)^{1/{q}}, \label{eq:zeta1}
	\end{align}
	for any $s\in[t_{j-1}, t_j]$, $i\in\{1,\ldots,N\}$, $j\in\{1,\ldots,n_h\}$ and  $l\in\{1,\ldots, m_1\}$.
	Further, by using Assumptions  H--\ref{asump:lip} and H--\ref{asump:holder},  Remark \ref{rem:interact_bound}, H\"older's inequality, Lemma \ref{lem:dif_sde} and $h\leq 1$, one obtains
	\begin{align}
		&\big\|\sigma_u(r, X_{r}^{i, N},\mu_{r}^{X, N})-\sigma_u(t_{j-1}, X_{j-1}^{i, N},\mu_{j-1}^{X, N})\big\|_{\mathscr{L}^{q}(\tilde\Omega)}^{q} \leq  L \big\{\big\|X_r^{i, N}-X_{j-1}^{i, N}\big\|_{\mathscr{L}^{q}(\tilde\Omega)}\notag
		\\
		&\quad +\big\|\mathcal{W}_2(\mu_r^{X, N}, \mu_{j-1}^{X, N})\big\|_{\mathscr{L}^{q}(\tilde\Omega)}+|r-t_{j-1}|\big(1+\big\|X^{i,N}_{j-1}\big\|_{\mathscr{L}^{q}(\tilde\Omega)}+ \big\|\mathcal{W}_2(\mu_{j-1}^{X, N},\delta_0)\big\|_{\mathscr{L}^{q}(\tilde\Omega)}\big) \}\notag
		\\
		&\leq  2L\max_{i\in\{1,\ldots,N\}}\big\|X_r^{i, N}-X_{j-1}^{i, N}\big\|_{\mathscr{L}^{q}(\tilde\Omega)}+|r-t_{j-1}| L\{1+2C_1^{1/\bar p}(1+\max_{i\in\{1,\ldots,N\}}\|X_0^i\|_{_{\mathscr L^{\bar p}(\tilde \Omega)}}^{\bar p})^{1/\bar p}\} \notag
		\\
		&\leq  \{2L(C_1^{1/\bar p}+C_2^{1/\bar p})(1+\max_{i\in\{1,\ldots,N\}}\|X_0^i\|_{_{\mathscr L^{\bar p}(\tilde \Omega)}}^{\bar p})^{1/\bar p}+L\}h_j
		^{1/2} \label{eq:sigma11},
	\end{align}
		for all $i \in\{1,\ldots,N\}$, $j\in\{1,\ldots,n_h\}$, $u\in\{0,1\}$ and $r\in[t_{j-1}, t_{j}]$. 
		On substituting  \eqref{eq:sigma11} in  \eqref{eq:zeta1}, one obtains
	\begin{align*}
		&\|\zeta^{\ell}(s) \|_{\mathscr{L}^{q}(\tilde\Omega)} \leq   \Big\{L+2\bar L^2+4L^2\Big(\frac{q(q-1)}{2}\Big)^{1/2}
		+\Big(2(L+2\bar L^2)C_1^{1/\bar p}
	   +8L^2\big(C_1^{1/\bar p}+C_2^{1/\bar p}\big)
	   \\
	   &\times\Big(\frac{q(q-1)}{2}\Big)^{1/2}
		+4LC_2^{2/\bar p}\Big)
	  \big(1+\max_{i\in\{1,\ldots,N\}}\|X_0^i\|_{_{\mathscr L^{\bar p}(\tilde \Omega)}}^{\bar p}\big)^{2/\bar p} \Big\}h_{j},
	\end{align*} 
	which, substituting in \eqref{eq:fin}, gives the first estimate. The second estimate can be proved by similar arguments. 
\end{proof}
		\begin{prop} \label{prop:consistency}
			Let Assumptions \mbox{\normalfont H--\ref{asump:initial_cond}} with $\bar{p}\geq 4$ and set $q=\bar{p}/2\geq 2$, \mbox{\normalfont{H--\ref{asump:lip}}}  to \mbox{\normalfont{H--\ref{asump:derv_lip}}} \it hold. Then, the drift-randomised Milstein scheme  \eqref{eq:scheme} is consistent of order 1, i.e., 
			\begin{align*}
				\big\|\mathcal R[ X^{\varrho_h}]\big\|_{\mathscr G_{S,q}^{h}}
				\leq C_{10}h,
			\end{align*}	
			   where 
				\begin{align*}
C_{10} :=2L\big(1+2\sqrt{{q(q-1)}/{2}}\big)\big\{L+& 2L(C_1^{1/\bar{p}}+C_2^{1/\bar{p}})
\\
& \times 
			\big(1+\max_{i\in\{1,\ldots,N\}}\|X_0^i\|^{\bar{p}}_{{\mathscr L^{\bar{p}}(\tilde \Omega)}}\big)^{1/\bar{p}}\big\}+C_{11}+C_{12}
				\end{align*} and the positive constants $C_1$, $C_2$, $C_{11}$ and  $C_{12}$ appear in Proposition \ref{prop:sde_bound},  Lemma \ref{lem:dif_sde},  Corollary \ref{cor:randomised_quad} and  Lemma \ref{lem:Zeta^1}, respectively.
		\end{prop}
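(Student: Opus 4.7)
The component $\mathcal R^{i,N}_0[X^{\varrho_h}]=X_0^i-X_0^{i,N,h}$ vanishes by construction, so it suffices to control the running sum $\sum_{j=1}^{\bar n}\mathcal R^{i,N}_j[X^{\varrho_h}]$ uniformly in $\bar n$ and $i$. Using \eqref{eq:interact} and the definitions \eqref{eq:Gamma}, \eqref{eq:residual}, the residual splits cleanly into three pieces: a \emph{drift quadrature error} $\sum_j\big[\int_{t_{j-1}}^{t_j}b(s,X_s^{i,N},\mu_s^{X,N})\,ds-h_jb(t_{j-1}+\eta_j h_j,X_{j,\eta}^{i,N},\mu_{j,\eta}^{X,N})\big]$, and two \emph{Milstein-expansion diffusion errors} of the form $\sigma_u^\ell(s,X_s^{i,N},\mu_s^{X,N})-\tilde\sigma_u^\ell(s,t_{j-1},X_{j-1}^{i,N},\mu_{j-1}^{X,N})$ integrated against $W^{i,\ell}$ and $W^{0,\ell}$ respectively. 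The latter two are exactly what Lemma \ref{lem:Zeta^1} estimates by $C_{12}h$, so the genuinely new work is in the drift piece.

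For the drift I insert the intermediate point $h_jb(t_{j-1}+\eta_j h_j,X^{i,N}_{t_{j-1}+\eta_j h_j},\mu^{X,N}_{t_{j-1}+\eta_j h_j})$, producing two subterms. The first is $\int_0^{t_{\bar n}}b(s,X_s^{i,N},\mu_s^{X,N})\,ds$ minus the randomised Riemann sum $\Theta^h_{\bar n,\eta}[V]$ applied to $V(s)=b(s,X_s^{i,N},\mu_s^{X,N})$, and is bounded by $C_{11}h$ via Corollary \ref{cor:randomised_quad}. The second is controlled using H--\ref{asump:lip} by $2L\sum_{j=1}^{n_h}h_j\max_i\|X^{i,N}_{t_{j-1}+\eta_j h_j}-X^{i,N}_{j,\eta}\|_{\mathscr L^q(\Omega)}$, after bounding $\mathcal W_2(\mu^{X,N}_{t_{j-1}+\eta_j h_j},\mu^{X,N}_{j,\eta})$ by $(N^{-1}\sum_i|X^{i,N}_{t_{j-1}+\eta_j h_j}-X^{i,N}_{j,\eta}|^2)^{1/2}$ to avoid any $N$-dependence.

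The key technical step is therefore the \emph{local Euler error} $\max_i\|X^{i,N}_{t_{j-1}+\eta_j h_j}-X^{i,N}_{j,\eta}\|_{\mathscr L^q(\Omega)}=O(h_j)$. Writing this difference as $\int_{t_{j-1}}^{t_{j-1}+\eta_j h_j}[b(s,\cdot)-b(t_{j-1},X_{j-1}^{i,N},\mu_{j-1}^{X,N})]\,ds+\sum_{u,\ell}\int_{t_{j-1}}^{t_{j-1}+\eta_j h_j}[\sigma_u^\ell(s,\cdot)-\sigma_u^\ell(t_{j-1},X_{j-1}^{i,N},\mu_{j-1}^{X,N})]\,dW_s^{\cdot,\ell}$, the drift contribution is $O(h^{3/2})\le O(h)$ by Minkowski together with H--\ref{asump:lip}, H--\ref{asump:holder} and the path regularity Lemma \ref{lem:dif_sde}. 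The stochastic terms carry the \emph{random upper limit} $t_{j-1}+\eta_j h_j$; since $\{\eta_j\}$ is independent of $(W,W^0)$, each such integral is dominated by the supremum $\sup_{t_{j-1}\le s\le t_j}|M_s|$ of the underlying $\tilde{\mathbb F}$-martingale $M$, which BDG (Lemma \ref{lem:Burkholder}) combined with Minkowski controls by $\bar C_q\big(\int_{t_{j-1}}^{t_j}\|\sigma_u^\ell(s,\cdot)-\sigma_u^\ell(t_{j-1},\cdot)\|_{\mathscr L^q(\Omega)}^2\,ds\big)^{1/2}=O(h)$, using H--\ref{asump:lip}, H--\ref{asump:holder} and Lemma \ref{lem:dif_sde} once more.

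Summing $h_j\cdot O(h_j)$ over $j\in\{1,\ldots,n_h\}$ yields an $O(h)$ contribution from the Euler-error subterm, which combines with the $C_{11}h$ from the quadrature and the $C_{12}h$ from Lemma \ref{lem:Zeta^1} to deliver the claim; tracking constants via Proposition \ref{prop:sde_bound}, Remark \ref{rem:interact_bound} and Lemma \ref{lem:dif_sde} reproduces the $C_{10}$ displayed in the statement, with no $N$-dependence by construction. The only genuinely delicate point is the stochastic integral with random upper limit, resolved by dominating it with the running supremum over the deterministic interval $[t_{j-1},t_j]$; everything else is bookkeeping of Hölder, Minkowski and BDG applications against already-established moment and regularity bounds.
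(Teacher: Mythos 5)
Your proposal is correct and follows the paper's own proof essentially step for step: same splitting of the residual into a drift quadrature error plus two Milstein-expansion diffusion errors, same insertion of $h_jb(t_{j-1}+\eta_jh_j,X^{i,N}_{t_{j-1}+\eta_jh_j},\mu^{X,N}_{t_{j-1}+\eta_jh_j})$ to split the drift error between Corollary~\ref{cor:randomised_quad} and the local Euler step, same reduction of the $\mathcal{W}_2$ term via the particle-wise coupling to avoid $N$-dependence, and same delegation of the diffusion errors to Lemma~\ref{lem:Zeta^1}. The only minor slip is citing Lemma~\ref{lem:Burkholder} (the \emph{discrete} BDG inequality) for the stochastic integral over $[t_{j-1},t_{j-1}+\eta_jh_j]$; the correct tool there is a continuous-time moment bound for It\^o integrals (Theorem 7.1 or 7.2 of \cite{Mao2008}, which is what the paper uses), though your supremum domination is otherwise the right idea and delivers the same $O(h_j)$ local estimate.
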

		\begin{proof}
			Let us recall the residual $\mathcal R_j^{i, N}[X^{\varrho_h}]$ from \eqref{eq:residual}, the interacting particle system from \eqref{eq:interact}  and $\Gamma^h_j$ from  \eqref{eq:Gamma} to write $\mathcal R_0^{i, N}[X^{\varrho_h}]=0$ and 
			\begin{align}
				\mathcal R_j^{i, N}[X^{\varrho_h}]&=X_{j}^{i, N}-X_{j-1}^{i, N}-\Gamma^h_j (X_{j-1}^{i, N},\mu_{j-1}^{X,N},X_{j,\eta}^{i, N},\mu_{j,\eta}^{X, N},\eta_j) \notag
				\\
				=&\int_{t_{j-1}}^{t_j}\big[b(s, X_s^{i, N},\mu_s^{X, N})-b(t_{j-1}+\eta_jh_j, X_{j,\eta}^{i,N}, \mu_{j,\eta}^{X,N})\big]ds \notag
				\\
				&+\sum_{\ell=1}^{m_1}\int_{t_{j-1}}^{t_j}\big[\sigma_1^{\ell}(s, X_s^{i, N},\mu_s^{X, N})-\tilde{\sigma}_1^{\ell}(s,t_{j-1}, X_{j-1}^{i, N},\mu_{j-1}^{X, N})\big]dW^{i,\ell}_s \notag
				\\
				&+\sum_{\ell=1}^{m_0}\int_{t_{j-1}}^{t_j}\big[\sigma_0^{\ell}(s, X_s^{i, N},\mu_s^{X, N})-\tilde{\sigma}_0^{\ell}(s,t_{j-1}, X_{j-1}^{i, N},\mu_{j-1}^{X, N})\big]dW_s^{0,\ell}, \label{eq:resi}
			\end{align}
			for all $i\in\{1,\ldots,N\}$ and $j\in\{1,\ldots,n_h\}$. 	
			For any $\bar{n}\in\{1,\ldots,n_h\}$, from \eqref{eq:Theata1}, $$\Theta_{\bar n, \eta}^{ h}\big[b\big]=\displaystyle\sum_{j=1}^{\bar n}h_jb(t_{j-1}+\eta_jh_j, X^{i, N}_{t_{j-1}+\eta_jh_j},\mu_{t_{j-1}+\eta_jh_j}^{X, N}),$$
			which along with Assumption H--\ref{asump:lip} yields
			\begin{align*}
				\sum_{j=1}^{\bar{n}}&\int_{t_{j-1}}^{t_j}\big[b(s, X_s^{i, N},\mu_s^{X, N})-b(t_{j-1}+\eta_jh_j, X_{j,\eta}^{i,N}, \mu_{j,\eta}^{X,N})\big]ds
				\\
				=  & \sum_{j=1}^{\bar{n}}\int_{t_{j-1}}^{t_j}\big[b(s, X_s^{i, N},\mu_s^{X, N})-b(t_{j-1}+\eta_jh_j, X^{i, N}_{t_{j-1}+\eta_jh_j},\mu_{t_{j-1}+\eta_jh_j}^{X, N})\big]ds
				\\
				&+\sum_{j=1}^{\bar{n}}\int_{t_{j-1}}^{t_j}\big[b(t_{j-1}+\eta_jh_j, X^{i, N}_{t_{j-1}+\eta_jh_j},\mu_{t_{j-1}+\eta_jh_j}^{X, N})-b(t_{j-1}+\eta_jh_j, X_{j,\eta}^{i,N}, \mu_{j,\eta}^{X,N})\big]ds
				\\
				\leq & \int_{0}^{t_{\bar{n}}}b(s, X_s^{i, N},\mu_s^{X, N}) ds- \Theta_{\bar n, \eta}^{ h}\big[b\big] 
				\\
				&+L\sum_{j=1}^{\bar{n}}h_j\big[\big|X^{i, N}_{t_{j-1}+\eta_jh_j}-X_{j,\eta}^{i,N}\big|+\big|\mathcal{W}_2(\mu_{t_{j-1}+\eta_jh_j}^{X, N}, \mu_{j,\eta}^{X,N})\big|\big],
			\end{align*}
			and thus from  \eqref{eq:resi} one obtains
			\begin{align} \label{eq:resid}
				\big\|&\max_{\bar n\in\{1,\ldots,n_h\}}\big|\sum_{j=1}^{\bar n}\mathcal R^{i, N}_j[X^{\varrho_h}]\big|\big\|_{\mathscr{L}^{q}(\Omega)}
				\leq \big\|\max_{\bar n\in\{1,\ldots,n_{h}\}}\big|\Theta^{h}_{\bar n, \eta}[b]-\int_0^{t_{\bar n}}b(s, X_s^{i, 
					N},\mu_s^{X, N})ds\big|\big\|_{\mathscr{L}^{q}(\Omega)}\notag
				\\
				&+L\sum_{j=1}^{n_h}h_j\big[\big\|X^{i, N}_{t_{j-1}+\eta_jh_j}-X_{j,\eta}^{i,N}\big\|_{\mathscr{L}^{q}(\Omega)}
				+\big\|\mathcal{W}_2(\mu_{t_{j-1}+\eta_jh_j}^{X, N}, \mu_{j,\eta}^{X,N})\big\|_{\mathscr{L}^{q}(\Omega)}
				\big]\notag
				\\
				&+\big\|\max_{\bar n\in\{1,\ldots,n_h\}}\big|\sum_{j=1}^{\bar{n}} \sum_{\ell=1}^{m_1}\int_{t_{j-1}}^{t_j}\big[\sigma_1^{\ell}(s, X_s^{i, N},\mu_s^{X, N})-\tilde{\sigma}_1^{\ell}(s,t_{j-1}, X_{j-1}^{i, N},\mu_{j-1}^{X, N})\big]dW^{i,\ell}_s\big|\big\|_{\mathscr{L}^{q}(\Omega)} \notag
				\\
				&+\big\|\max_{\bar n\in\{1,\ldots,n_h\}}\big|\sum_{j=1}^{\bar{n}} \sum_{\ell=1}^{m_0}\int_{t_{j-1}}^{t_j}\big[\sigma_0^{\ell}(s, X_s^{i, N},\mu_s^{X, N})-\tilde{\sigma}_0^{\ell}(s,t_{j-1}, X_{j-1}^{i, N},\mu_{j-1}^{X, N})\big]dW_s^{0,\ell}\big|\big\|_{\mathscr{L}^{q}(\Omega)} ,
			\end{align}
			for all $i\in\{1,\ldots,N\}$. 
			Furthermore,   Theorem 7.1 in \cite{Mao2008} yields
			\begin{align*}
				&\big\|X^{i, N}_{t_{j-1}+\eta_jh_j}-X_{j,\eta}^{i,N}\big\|_{\mathscr{L}^{q}(\Omega)}
				\leq  h_j^{(q-1)/q}\Big(\int_{t_{j-1}}^{t_{j}} \big\|b(s, X_s^{i, N},\mu_s^{X, N})-b(t_{j-1}, X_{j-1}^{i, N},\mu_{j-1}^{X, N})\big\|_{\mathscr{L}^{q}(\Omega)}^{q}ds\Big)^{1/q}\notag
				\\
				&\quad+\Big(\frac{q(q-1)}{2}\Big)^{1/2} h_j^{(q-2)/2q}\sum_{u=0}^1 \Big(\int_{t_{j-1}}^{t_{j}}\big\|\sigma_u(s, X_s^{i, N},\mu_s^{X, N})-\sigma_u(t_{j-1}, X_{j-1}^{i, N},\mu_{j-1}^{X, N})\big\|_{\mathscr{L}^{q}(\Omega)}^{q}ds\Big)^{1/q}, \notag
			\end{align*}
		which on using Equations \eqref{eq:b11} and \eqref{eq:sigma11},
			\begin{align} \label{eq:dif_randomised_X}
				\big\|X^{i, N}_{t_{j-1}+\eta_jh_j}-X_{j,\eta}^{i,N}\big\|_{\mathscr{L}^{q}(\Omega)}\leq & \Big(1+2\Big(\frac{q(q-1)}{2}\Big)^{1/2}\Big)\Big\{L+2L(C_1^{1/\bar{p}}+C_2^{1/\bar{p}})\notag
				\\
				&\times\big(1+\max_{i\in\{1,\ldots,N\}}\|X_0^i\|^{\bar{p}}_{{\mathscr L^{\bar{p}}(\tilde \Omega)}}\big)^{1/\bar{p}}
				\Big\}h_j,
			\end{align}
			for all $i\in\{1,\ldots,N\}$ and $j\in\{1,\ldots,n_h\}$. Also, 
			\begin{align} \label{eq:uu}
				\big\|\mathcal{W}_2(\mu_{t_{j-1}+\eta_jh_j}^{X, N}, \mu_{j,\eta}^{X,N})\big\|_{\mathscr{L}^{q}(\Omega)}\leq \max_{i \in\{1,\ldots,N\}}\big\|X^{i, N}_{t_{j-1}+\eta_jh_j}-X_{j,\eta}^{i,N}\big\|_{\mathscr{L}^{q}(\Omega)},
			\end{align}
			for all $j\in\{1,\ldots,n_h\}$. 
			The proof is completed by substituting \eqref{eq:dif_randomised_X} and \eqref{eq:uu} in \eqref{eq:resid} and using  Corollary \ref{cor:randomised_quad} and Lemma~\ref{lem:Zeta^1}.
		\end{proof}
		
		\subsection{Rate of Convergence of the Scheme} \label{sec:rate}
		After proving bistability and consistency of the scheme \eqref{eq:scheme}, the  main result,  Theorem \ref{thm:mainresult}, follows immediately.  
		
		\begin{proof}[Proof of Theorem \ref{thm:mainresult}]
			Set $Y^h$ in Proposition \ref{prop:residual} as $Y^h=X^{\varrho_h}=\{X^{i, N}_{j}\}_{j\in\{0,\ldots,n_h\}}\in {\mathscr G_{q}^h}$ (as introduced in Section \ref{section:consistency}) to obtain,  
			\begin{align*}
				\|X^{\varrho_h}-X^{h}\|_{\mathscr G_{q}^h}&=\max_{i\in\{1,\ldots,N\}}\big\|\max_{j\in\{0,1,
					\ldots,n_h\}}\big|X_j^{i, N}-X_j^{i, N,h}\big|\big\|_{\mathscr{L}^{q}(\Omega)}\leq C_7\big\|\mathcal R[X^{\varrho_h}]\big\|_{\mathscr G_{S,q}^{h}},
			\end{align*}
			 which  on using Proposition \ref{prop:consistency} completes the proof.
		\end{proof}

			\appendix
			\section{Auxiliary results}

			The following lemma is the discrete Burkholder--Davis--Gundy inequality, see \cite{Burkholder1966}.
			\begin{lem}[\bf Discrete Burkholder--Davis--Gundy Inequality] \label{lem:Burkholder}
				Let $\{M_n\}_{n\in \bar{\mathbb N}}$ be a discrete-time martingale on a probability space $(\Omega, \mathscr{F}, \mathbb{P})$ with respect to the filtration $\{\mathscr{F}_n\}_{n\in \bar{\mathbb{N}}}$. Then,  there exist constants $\underbar{C}_{p}, \bar{C}_{p}>0$  such that,
				\begin{align*}
					\underbar{C}_{p}\|[M]_n^{1/2}\|_{\mathscr{L}^{p}(\Omega)}
					\leq \|\max_{j\in\{0,1,\cdots,n\}}|M_j|\|_{\mathscr{L}^{p}(\Omega)}
					\leq \bar{C}_{p}\|[M]_n^{1/2}\|_{\mathscr{L}^{p}(\Omega)},
				\end{align*}
				for any $p\geq 1$  where $\{[M]_n\}_{n\in \bar{\mathbb{N}}}$ is the quadratic variation  process of $\{M_n\}_{n\in \bar{\mathbb{N}}}$.
			\end{lem}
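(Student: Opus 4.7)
This is Burkholder's classical discrete square-function inequality, so my plan is to follow the standard strategy: establish the upper bound first, then derive the lower bound by a good-$\lambda$ argument. Throughout, let $d_j := M_j - M_{j-1}$ denote the martingale increments, so that $[M]_n = \sum_{j=1}^n d_j^2$.

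\textit{Upper bound for $p \geq 2$.} I would begin from the discrete Taylor expansion
\[
|M_j|^p - |M_{j-1}|^p = p\,|M_{j-1}|^{p-2} M_{j-1}\, d_j + \tfrac{p(p-1)}{2}\, |\xi_j|^{p-2}\, d_j^2,
\]
with $\xi_j$ an intermediate point. Summing over $j$, the first term on the right is a martingale transform whose running maximum is controlled by Doob's $\mathscr L^{p/(p-1)}$ inequality together with a duality step; the second term is dominated pointwise by $\tfrac{p(p-1)}{2} \max_j|M_j|^{p-2}\,[M]_n$. Taking the $\mathscr L^{p/2}$-norm of both sides and applying Young's inequality to absorb the $\max_j|M_j|^{p-2}$ factor on the left yields $\|\max_j|M_j|\|_{\mathscr L^p} \leq \bar C_p\, \|[M]_n^{1/2}\|_{\mathscr L^p}$.

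\textit{Upper bound for $1 \leq p < 2$.} Here $x \mapsto |x|^p$ loses regularity at the origin, so the Taylor argument fails. I would instead employ Davis' decomposition $M_j = Y_j + Z_j$, where $Y$ is a martingale with $|\Delta Y_j| \leq 2 d_j^{\ast}$ (writing $d_j^{\ast} := \max_{i \leq j}|d_i|$) and $Z$ has predictable, absolutely summable variation. The contribution from $Y$ is bootstrapped from the trivial identity $\mathbb E|Y_n|^2 = \mathbb E [Y]_n$ via Doob's inequality and Davis' auxiliary maximal bound $\|d_n^{\ast}\|_{\mathscr L^p} \leq C_p\,\|[M]_n^{1/2}\|_{\mathscr L^p}$; the contribution from $Z$ follows directly from its predictable total-variation estimate.

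\textit{Lower bound.} I would invoke Burkholder's good-$\lambda$ inequality: for every $\beta > 1$ and every sufficiently small $\delta > 0$,
\[
\mathbb P\bigl([M]_n^{1/2} > \beta\lambda,\ \max_j|M_j| \leq \delta\lambda\bigr) \leq c(\beta,\delta)\,\mathbb P\bigl([M]_n^{1/2} > \lambda\bigr),
\]
with $c(\beta,\delta) \to 0$ as $\delta \to 0$. This is established by stopping at the first time $[M]^{1/2}$ crosses $\lambda$ and exploiting the orthogonality of increments on the stopped martingale. Multiplying by $p\lambda^{p-1}$ and integrating in $\lambda$ delivers $\underbar C_p\,\|[M]_n^{1/2}\|_{\mathscr L^p} \leq \|\max_j|M_j|\|_{\mathscr L^p}$, after choosing $\delta$ small enough to absorb the cross-term back into the left-hand side.

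The main obstacle is the range $1 \leq p < 2$ of the upper bound, where Davis' decomposition requires careful bookkeeping of the predictable compensator and the auxiliary estimate $\|d_n^{\ast}\|_{\mathscr L^p} \leq C_p\,\|[M]_n^{1/2}\|_{\mathscr L^p}$ is itself non-routine. The good-$\lambda$ step for the lower bound is likewise delicate, as it hinges on a clean stopping-time construction that decouples $[M]^{1/2}$ from $\max_j|M_j|$ at the crossing level.
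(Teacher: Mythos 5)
The paper does not provide a proof of this lemma: it is stated as a known classical result and attributed to the reference Burkholder (1966), \emph{Martingale transforms}, Ann.\ Math.\ Statist.\ 37. There is therefore no ``paper proof'' against which to compare yours step-by-step.

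Your sketch is a correct outline of the classical argument, and in fact it is more complete than what the cited reference alone supports: Burkholder's 1966 paper establishes the square-function inequality only for $1<p<\infty$; the endpoint case $p=1$ that the lemma claims is Davis' inequality (Davis, 1970), which is exactly where you invoke Davis' decomposition and the auxiliary bound $\|d_n^{\ast}\|_{\mathscr L^p}\lesssim\|[M]_n^{1/2}\|_{\mathscr L^p}$. Your three-part strategy --- Taylor expansion and Young absorption for $p\geq 2$, Davis' decomposition with the predictable-variation estimate for $1\leq p<2$, and a good-$\lambda$ inequality via a stopping-time decoupling for the lower bound --- is the standard modern route and is sound; the only caveat is that as written it is a high-level roadmap rather than a full proof, and you rightly flag the two delicate points (the $p<2$ bookkeeping and the good-$\lambda$ stopping construction) as the genuinely non-routine steps. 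Since the paper is content to cite the result, no further detail is needed for the purposes of this manuscript, but if you wanted a complete self-contained proof you would need to execute the Davis decomposition and the good-$\lambda$ estimate in full.
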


			The discrete version of  the Gr\"onwall's inequality is given below, see Proposition 4.1~in~ \cite{Emmrich1999}.
			\begin{lem}[\bf Discrete Gr\"onwall's Inequality] \label{lem:Gronwall}
				Let $\{y_n\}_{n\in \mathbb{N}}$ and $\{z_n\}_{n\in\mathbb N}$ be sequences  of non-negative  real numbers satisfying $y_n\leq a+\displaystyle \sum_{j=1}^{n-1}z_jy_j$
				for all $n\in\mathbb{N}$ where $a>0$ is a constant.
				Then, $y_n\leq a\exp\big(\displaystyle \sum_{j=1}^{n-1}z_j\big)$ for all $n\in\mathbb{N}$.
			\end{lem}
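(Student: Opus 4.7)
The plan is to prove this by a straightforward induction on $n$, with the key analytic input being the elementary inequality $e^{x}\geq 1+x$ valid for all $x\geq 0$ (which follows from convexity of the exponential), combined with a telescoping identity for partial sums of $\{z_j\}$.

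For the base case $n=1$, the hypothesis reads $y_1\leq a+\sum_{j=1}^{0}z_jy_j=a$ under the standard empty-sum convention, which coincides with $a\exp\bigl(\sum_{j=1}^{0}z_j\bigr)=a$, so the conclusion holds trivially.

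For the inductive step, suppose $y_k\leq a\exp\bigl(\sum_{i=1}^{k-1}z_i\bigr)$ holds for every $1\leq k\leq n$. Applying the hypothesis to $y_{n+1}$ and substituting the inductive bound (which is permitted since $z_j\geq 0$) yields
\[
y_{n+1}\leq a+\sum_{j=1}^{n}z_jy_j\leq a\Bigl(1+\sum_{j=1}^{n}z_j\exp\bigl(\sum_{i=1}^{j-1}z_i\bigr)\Bigr).
\]
It therefore suffices to establish the purely numerical bound
\[
1+\sum_{j=1}^{n}z_j\exp\bigl(\sum_{i=1}^{j-1}z_i\bigr)\leq \exp\bigl(\sum_{j=1}^{n}z_j\bigr).
\]
Writing $S_j:=\sum_{i=1}^{j}z_i$ with $S_0:=0$, the inequality $e^{z_j}-1\geq z_j$ gives
\[
e^{S_j}-e^{S_{j-1}}\,=\,e^{S_{j-1}}\bigl(e^{z_j}-1\bigr)\,\geq\, z_j\,e^{S_{j-1}}.
\]
Summing this over $j=1,\ldots,n$ telescopes the left-hand side to $e^{S_n}-1$, producing $e^{S_n}-1\geq\sum_{j=1}^{n}z_j e^{S_{j-1}}$, which is the desired bound and closes the induction.

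I do not anticipate any substantive obstacle here: this is a classical discrete Grönwall-type estimate, and the entire argument reduces to one induction together with one use of $e^{x}\geq 1+x$. The only minor care needed is the empty-sum convention at $n=1$ and the observation that the nonnegativity of $\{z_j\}$ is precisely what allows a monotone substitution of the inductive bound into the sum on the right-hand side.
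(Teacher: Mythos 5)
The paper does not prove this lemma itself; it simply cites Proposition~4.1 of Emmrich (1999). Your inductive argument is correct: the base case handles the empty sum properly, the nonnegativity of $z_j$ justifies the monotone substitution of the inductive bound, and the key step $e^{S_j}-e^{S_{j-1}}=e^{S_{j-1}}(e^{z_j}-1)\geq z_j e^{S_{j-1}}$ telescopes cleanly to the required numerical inequality. This is the standard short proof of the discrete Gr\"onwall lemma and is a perfectly acceptable substitute for the citation.
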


		\subsection{Proof of Proposition \ref{prop:sde_bound}}
			\label{appendix:prop:sde_bound}
		Let $t'\in[0,T]$. Using  \eqref{eq:sde} and  H\"older's inequality, we get 
		\begin{align*} 
			\sup_{t\in[0,t']}|X_t|^{\bar p}
			&\leq 4^{\bar p-1}\Big\{|X_0|^{\bar p} + T^{\bar p-1}\int_0^{t'} |b(s, X_s, \mathcal L^1(X_s))|^{\bar p}ds 
			\\
			& +    \sup_{t\in[0,t']}\big|\int_0^t \sigma_1 (s, X_s, \mathcal L^1(X_s))dW_s \big|^{ \bar p}  +    \sup_{t\in[0,t']}\big|\int_0^t \sigma_0 (s, X_s,\mathcal L^1(X_s))dW_s^{0}\big|^{\bar p}\Big\},	
		\end{align*}
		which	on using Burkholder--Gundy--Davis inequality (Theorem 7.2 in \cite{Mao2008}) and  Remark \ref{rem:linear} yields
		\begin{align*}
			&\big\|\sup_{t\in[0,t']}|X_t|\big\|^{\bar p}_{\mathscr{L}^{\bar p}(\tilde\Omega)}
			\leq  4^{\bar p-1} \Big \{\big\|X_0\big\|_{\mathscr{L}^{\bar p}(\tilde\Omega)}^{\bar p} + T^{\bar p-1}\int_{0}^{t'} \big\|b(s, X_s, \mathcal L^1(X_s))\big\|_{\mathscr{L}^{\bar p}(\tilde\Omega)}^{\bar p}ds
			\\
			& \quad +\Big(\frac{\bar{p}^3}{2(\bar{p}-1)}\Big)^{\bar{p}/2} T^{(\bar{p}-2)/2}\sum_{u=0}^1\int_{0}^{t'}\big\|\sigma_u(s, X_s,\mathcal L^1(X_s))\big\|_{\mathscr{L}^{\bar p}(\tilde\Omega)}^{\bar p}ds \Big \}
			\\
			& \leq 4^{\bar p-1}\big\|X_0\big\|_{\mathscr{L}^{\bar p}(\tilde\Omega)}^{\bar p} +4^{\bar p-1} 3^{\bar{p}-1}\bar{L}^{\bar{p}}\Big(T^{
			\bar{p}-1}+2\Big(\frac{\bar{p}^3}{2(\bar{p}-1)}\Big)^{\bar{p}/2} T^{(\bar{p}-2)/2} \Big)
			\\
			& \qquad \times \int_{0}^{t'}\big\{1+\big\|X_s\big\|_{\mathscr{L}^{\bar p}(\tilde\Omega)}^{\bar p}+\big\|\mathcal{W}_2(\mathcal L^1(X_s),\delta_0)\big\|_{\mathscr{L}^{\bar p}(\tilde\Omega)}^{\bar p}\big\}ds
			\\
			& \leq  4^{\bar p-1}\big\|X_0\big\|_{\mathscr{L}^{\bar p}(\tilde\Omega)}^{\bar p}+12^{\bar p-1}\bar{L}^{\bar{p}}\Big(T^{
 			\bar{p}}+2\Big(\frac{\bar{p}^3}{2(\bar{p}-1)}\Big)^{\bar{p}/2} T^{\bar{p}/2} \Big)
			\\
			&\qquad+ 12^{\bar p-1}\bar{L}^{\bar{p}}\Big(T^{
			\bar{p}-1}+2\Big(\frac{\bar{p}^3}{2(\bar{p}-1)}\Big)^{\bar{p}/2} T^{(\bar{p}-2)/2} \Big) 2 \int_{0}^{t'} \big\|\sup_{r\in[0,s]}|X_r|\big\|^{\bar p}_{\mathscr{L}^{\bar p}(\tilde\Omega)} ds
		\end{align*}
		for all $t'\in[0,T]$. Then, Gr\"onwall's inequality gives
			\begin{align*}
			\big\|\sup_{t\in[0,T]}|X_t|\big\|^{\bar p}_{\mathscr{L}^{\bar p}(\tilde\Omega)} \leq  
			&\Big(4^{\bar p-1}\big\|X_0\big\|_{\mathscr{L}^{\bar p}(\tilde\Omega)}^{\bar p}+12^{\bar p-1}\bar{L}^{\bar{p}}
			\Big(T^{\bar{p} }
					+2\Big(\frac{\bar{p}^3}{2(\bar{p}-1)}\Big)^{\bar{p}/2} T^{\bar{p}/2} \Big)\Big) 
			\\
			& \times  \exp\Big(12^{\bar p-1}\bar{L}^{\bar{p}}\Big(T^{
			\bar{p}}+2\Big(\frac{\bar{p}^3}{2(\bar{p}-1)}\Big)^{\bar{p}/2} T^{\bar{p}/2} \Big) 2\Big),
		\end{align*}		
		which completes the proof. \hfill $\square$
	
			\subsection{Proof of Lemma \ref{lem:dif_sde}}
			\label{lemma1}
			Recall Equation \eqref{eq:interact} and use Theorem 7.1 in \cite{Mao2008} along with Remark \ref{rem:linear}  to get the following, 
			\begin{align*}
				&\big\|X_t^{i,N}-X_{t'}^{i,N}\big\|^{\bar p}_{\mathscr{L}^{\bar p}(\tilde\Omega)} \leq  3^{\bar p-1}(t-t')^{\bar p-1}\int_{t'}^t \big\|b(s, X_s^{i,N}, \mu_s^{X,N})\big\|_{\mathscr{L}^{\bar p}(\tilde\Omega)}^{\bar p}ds
				\\
				&+3^{\bar p-1} \Big( \frac{\bar{p}(\bar{p}-1)}{2}\Big)^{\bar{p}/2}(t-t')^{(\bar{p}-2)/2} \sum_{u=0}^1 \int_{t'}^t\big\|\sigma_u(s, X_s^{i,N},\mu_s^{X,N})\big\|_{\mathscr{L}^{\bar p}(\tilde\Omega)}^{\bar p}ds
				\\
				\leq & 9^{\bar p-1}L^{\bar p}\Big((t-t')^{\bar p-1}+2\Big( \frac{\bar{p}(\bar{p}-1)}{2}\Big)^{\bar{p}/2}(t-t')^{(\bar p-2)/2}\Big)
				\\ 
				&
				\qquad \times\int_{t'}^t\big(1+\big\|X_s^{i,N}\big\|_{\mathscr{L}^{\bar p}(\tilde\Omega)}^{\bar p}+\big\|\mathcal{W}_2(\mu_s^{X,N},\delta_0)\big\|_{\mathscr{L}^{\bar p}(\tilde\Omega)}^{\bar p}\big)ds,
			\end{align*}
		for all $t> t'\in[0, T]$ and $i\in\{1,\ldots,N\}$,	which due to  Remark \ref{rem:interact_bound} yields, 
			\begin{align*}
				\big\|&X_t^{i,N}- X_{t'}^{i,N}\big\|^{\bar p}_{\mathscr{L}^{\bar p}(\tilde\Omega)}	\leq  9^{\bar p-1}L^{\bar p}\Big((t-t')^{\bar p-1}+2\Big( \frac{\bar{p}(\bar{p}-1)}{2}\Big)^{\bar{p}/2}(t-t')^{(\bar p-2)/2}\Big)
				\\
				& \times \int_{t'}^t\big\{1+2\max_{i \in \{1,\ldots,N\}}\big\|X_s^{i,N}\big\|_{\mathscr{L}^{\bar p}(\tilde\Omega)}^{\bar p}\big\}ds
				\\
				\leq & 9^{\bar p-1}L^{\bar p} \Big((t-t')^{\bar p/2}+2\Big( \frac{\bar{p}(\bar{p}-1)}{2}\Big)^{\bar{p}/2}\Big) (t-t')^{\bar p/2}\Big( 1+2C_1 \Big(1+\max_{i \in \{1,\ldots,N\}}\big\|X_0^{i,N}\big\|_{\mathscr{L}^{\bar p}(\tilde\Omega)}^{\bar p}\Big)\Big)
		\end{align*}			
			and thus the proof is completed. \hfill $\square$
			
		\subsection{Proof of Proposition \ref{prop:poc}}
			\label{appendix:prop:poc}
			
		By using \eqref{eq:noninteract} and \eqref{eq:interact} along with H\"older's inequality, 		one obtains
		\begin{align*} 
			\sup_{t\in[0,T]}&|X_t^{i}-X_{t}^{i,N}|^{2}\leq 3\Big\{ T\int_0^T |b(s, X_s^i, \mathcal L^1(X_s^1))-b(s, X_s^{i,N}, \mu_s^{X,N})|^{2}ds 
			\\
			&+   \sup_{t\in[0,T]}\big|\int_0^t \big(\sigma_1 (s, X_s^i, \mathcal L^1(X_s^1))-\sigma_1(s, X_s^{i,N}, \mu_s^{X,N})\big)dW_s \big|^{2} \notag
			\\
			&+ \sup_{t\in[0,T]}\big|\int_0^t \big ( \sigma_0 (s, X_s^i,\mathcal L^1(X_s))-\sigma_0(s, X_s^{i,N}, \mu_s^{X,N})\big)dW_s^{0}\big|^{2}\Big\}, 
		\end{align*}
		which on using the martingale inequality (Theorem 7.2 in \cite{Mao2008}) and Assumption H--\ref{asump:lip} yields 
		\begin{align*}
			\big\|&\sup_{t\in[0,t']} |X_t^{i}-X_{t}^{i,N}|\big\|_{\mathscr{L}^{2}(\tilde \Omega)}^2 \leq  3T\int_{0}^{t'} \big\|b(s, X_s^i, \mathcal L^1(X_s^1))-b(s, X_s^{i,N}, \mu_s^{X,N})\big\|_{\mathscr{L}^{2}(\tilde \Omega)}^{2}ds
			\\
			&\quad+12 \sum_{u=0}^1 \int_{0}^{t'}\big\|\sigma_u (s, X_s^i, \mathcal L^1(X_s^1))-\sigma_u(s, X_s^{i,N},\mu_s^{X,N})\big\|_{\mathscr{L}^{2}(\tilde \Omega)}^{2}ds
			\\
			&  \leq 2(3T+24) L^2\int_0^{t'} \big \{ \big\| X_s^i - X_s^{i,N}\big\|_{\mathscr{L}^{2}(\tilde \Omega)}^{2}+\big\| \mathcal W_2 (\mathcal L^1(X_s^1),\mu_s^{X,N}\big\|_{\mathscr{L}^{2}(\tilde \Omega)}^{2}\big \}ds
			\\
			& \leq 2(3T+24) L^2 \int_{0}^{t'} \big \{ \big\| X_s^i - X_s^{i,N}\big\|_{\mathscr{L}^{2}(\tilde \Omega)}^{2}+2\big\| \mathcal W_2 (\mathcal L^1(X_s^1),\mu_s^{X}\big\|_{\mathscr{L}^{2}(\tilde \Omega)}^{2}
			+2\big\| \mathcal W_2 (\mathcal \mu_s^{X},\mu_s^{X,N}\big\|_{\mathscr{L}^{2}(\tilde \Omega)}^{2}\big \}ds
			\\
			& \leq 2(3T+24) L^2 \int_{0}^{t'} \big \{ 3 \max_{i\in\{1,\ldots,N\}}\big\| X_s^i - X_s^{i,N}\big\|_{\mathscr{L}^{2}(\tilde \Omega)}^{2}+2\big\| \mathcal W_2 (\mathcal L^1(X_s^1),\mu_s^{X}\big\|_{\mathscr{L}^{2}(\tilde \Omega)}^{2}\big \}ds
		\end{align*}
		for any $t'\in [0,T]$ and $N\in \mathbb{N}$. 	
		An application of Gr\"onwall's inequality yields
		\begin{align*}
			\max_{i\in\{1,\ldots,N\}}\big\|\sup_{t\in[0,T]} |X_t^{i}-X_{t}^{i,N}|\big\|_{\mathscr{L}^{2}(\tilde \Omega)}^2 \leq  4(3T+24)L^2 e^{6(3T+24)TL^2}\int_{0}^T \big\| \mathcal W_2 (\mathcal L^1(X_s^1),\mu_s^{X}\big\|_{\mathscr{L}^{2}(\tilde \Omega)}^{2} ds,
		\end{align*}
		for any $N\in \mathbb{N}$.  
		The proof is completed by using \eqref{eq:meaure:rate}.
\hfill $\square$		

			\subsection{Lions Derivative and Particle Projection Function Inequalities} 
			\label{lions}
			Given a function $f:\mathscr{P}_2(\mathbb{R}^d)\to\mathbb{R}$ and $\nu_0\in\mathscr{P}_2(\mathbb{R}^d)$, we say that $f$ is  Lions differentiable at $\nu_0$ if we can find an atomless, Polish probability space $(\tilde{\Omega}, \tilde{\mathcal{F}}, \tilde{P})$ and a random variable $Y_0\in\mathscr{L}^2(\tilde{\Omega})$ with  law $\mathcal{L}^1({Y_0}):=\tilde{P}\circ Y_0^{-1}=\nu_0$ such that 
			the ``lift'' function $F:\mathscr{L}^2(\tilde{\Omega})\to \mathbb{R}$ defined by $F(Z):=f(\mathcal{L}^1(Z))$ has  Fr\'echet derivative $F'[Y_0]$ at $Y_0\in\mathscr{L}^2(\tilde{\Omega})$.   By Riesz representation theorem, we can find  $DF(Y_0)\in \mathscr{L}^2(\tilde{\Omega})$ such that $F'[Y_0](Z)=\tilde{E} \langle DF[Y_0], Z\rangle$  for all $Z\in\mathscr{L}^2 (\tilde{\Omega};\mathbb{R}^d)$.  
			Further, Theorem 6.5 (structure of the gradient) in \cite{Cardaliaquet2013} guarantees the existence of  a function $\partial_\mu f(\nu_0): \mathbb{R}^d\to\mathbb{R}^d$, independent of the choice of the probability space $(\tilde{\Omega}, \tilde{\mathcal{F}}, \tilde{P})$ and the random variable $Y_0$,  satisfying $
			\displaystyle\int_{\mathbb{R}^d} |\partial_\mu f(\nu_0)(x)|^2 \nu_0(dx) <\infty
			$ such that $DF(Y_0)=\partial_\mu f(\nu_0)(Y_0)$. 
			Then, we call $\partial_\mu f(\nu_0)$  as  \textit{Lions' derivative} of $f$ at $\nu_0=\mathcal{L}^1({Y_0})$.	
%

By using similar arguments used in Lemma 4.2 in \cite{Kumar2021}, one can prove the following lemma. 
			\begin{lem}\label{lem:mvt}
			Let $g$ be a real valued function defined on $[0,T]\times\mathbb R^d\times\mathscr{P}_2(\mathbb{R}^d)$ such that derivative  with respect to the state variable  $\partial_x g(\cdot,\cdot,\cdot)$ and  with respect to the measure variable  $\partial_\mu g(\cdot,\cdot,\cdot, \cdot)$ satisfy Lipschitz condition uniformly in time, i.e., there exists a constant $L>0$ such that, 
			\begin{align*}
				|\partial_xg(t,x,\mu)-\partial_xg(t,\bar x,\bar \mu)|
				&\leq L\{|x-\bar x|+\mathcal W_2(\mu,\bar \mu)\},
				\\
				|\partial_\mu g(t,x,\mu,y)-\partial_\mu g(t,\bar x,\bar \mu,\bar y)|
				&\leq L\{|x-\bar x|+\mathcal W_2(\mu,\bar \mu)+|y-\bar y|\},
			\end{align*}
			for all $t\in[0,T]$, $x,\bar x,y,\bar y\in \mathbb R^d$	 and $\mu,\bar \mu\in\mathscr{P}_2(\mathbb{R}^d)$. Then, for all $t\in[0,T]$, $x^i$, $\bar x^i\in\mathbb R^d$  and $i\in\{1,\ldots,N\}$, the following holds, 
			\begin{align*}
				&\big|g\big(t, x^i,\frac{1}{N}\sum_{k=1}^N\delta_{x^k}\big) - g\big(t, \bar x^i,\frac{1}{N}\sum_{k=1}^N\delta_{\bar x^k}\big) - \partial_xg\big(t,\bar x^{i},\frac{1}{N}\sum_{k=1}^N\delta_{\bar x^k}\big)(x^i-\bar x^i)
				\\
				&-\frac{1}{N}\sum_{k= 1}^{N}\partial_\mu g\big(t,\bar x^{i},\frac{1}{N}\sum_{k=1}^N\delta_{\bar x^k},\bar x^k\big)(x^k-\bar x^k)\big|\leq  \frac{3L}{2}|x^i-\bar x^i|^2+\frac{5L}{2}\frac{1}{N}\sum_{ k=1}^N|x^k-\bar x^k|^2.
			\end{align*}  
		\end{lem}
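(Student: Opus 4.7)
The plan is to view $g$ composed with the empirical projection as a smooth function on $(\mathbb{R}^d)^N$ and apply a first-order Taylor expansion with integral remainder. Concretely, for fixed $t\in[0,T]$ and fixed index $i$, define
\begin{align*}
G(y^1,\ldots,y^N) := g\Big(t,\, y^i,\, \tfrac{1}{N}\sum_{k=1}^N \delta_{y^k}\Big), \qquad (y^1,\ldots,y^N)\in (\mathbb{R}^d)^N.
\end{align*}
By the chain rule for the Lions derivative (see e.g.\ \cite{Cardaliaquet2013}), $G$ is continuously differentiable on $(\mathbb{R}^d)^N$ with partial derivatives
\begin{align*}
\partial_{y^i} G(y) &= \partial_x g\big(t, y^i, \mu^N\big) + \tfrac{1}{N}\,\partial_\mu g\big(t, y^i, \mu^N, y^i\big),\\
\partial_{y^k} G(y) &= \tfrac{1}{N}\,\partial_\mu g\big(t, y^i, \mu^N, y^k\big) \quad \text{for } k\neq i,
\end{align*}
where $\mu^N = \frac{1}{N}\sum_{k=1}^N \delta_{y^k}$. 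Then I would apply the fundamental theorem of calculus along the straight line segment between $\bar x = (\bar x^1,\ldots,\bar x^N)$ and $x = (x^1,\ldots,x^N)$. Writing $y^k_\theta := \bar x^k + \theta(x^k - \bar x^k)$ and $\mu^N_\theta := \frac{1}{N}\sum_{k=1}^N \delta_{y^k_\theta}$, and then subtracting the linear terms, the left-hand side of the lemma's inequality equals the absolute value of
\begin{align*}
\int_0^1 \Big\{ \big[\partial_x g(t,y^i_\theta,\mu^N_\theta) - \partial_x g(t,\bar x^i,\bar\mu^N)\big](x^i-\bar x^i)
+ \tfrac{1}{N}\sum_{k=1}^N \big[\partial_\mu g(t,y^i_\theta,\mu^N_\theta,y^k_\theta) - \partial_\mu g(t,\bar x^i,\bar\mu^N,\bar x^k)\big](x^k-\bar x^k) \Big\}\, d\theta,
\end{align*}
which is a clean integrated error term ready to be bounded by the assumed Lipschitz moduli of $\partial_x g$ and $\partial_\mu g$.

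Next I would estimate this integrand using the two Lipschitz hypotheses of the lemma, together with the bound
\begin{align*}
\mathcal{W}_2(\mu^N_\theta, \bar\mu^N) \leq \Big(\tfrac{1}{N}\sum_{k=1}^N |y^k_\theta-\bar x^k|^2\Big)^{1/2} = \theta \sqrt{A}, \qquad A:=\tfrac{1}{N}\sum_{k=1}^N |x^k-\bar x^k|^2,
\end{align*}
obtained by coupling the two empirical measures via the identity map on indices. For the $\partial_x g$ term this gives a factor $L\theta(|x^i-\bar x^i| + \sqrt{A})$; for the $\partial_\mu g$ terms a factor $L\theta(|x^i-\bar x^i| + |x^k-\bar x^k| + \sqrt{A})$. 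Performing the $\theta$-integral produces an extra $\tfrac{1}{2}$ factor.

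The remaining step is a careful accounting with Young's inequality $ab \leq \tfrac{1}{2}(a^2+b^2)$ and Cauchy--Schwarz on the $\frac{1}{N}\sum_{k=1}^N$ averages (in particular $\frac{1}{N}\sum_k |x^k-\bar x^k|\sqrt{A}\leq A$ and $\frac{1}{N}\sum_k |x^i-\bar x^i|\cdot|x^k-\bar x^k|\leq \tfrac{1}{2}(|x^i-\bar x^i|^2 + A)$) to rearrange all cross terms into pure $|x^i-\bar x^i|^2$ and $A$ contributions with the stated constants $\tfrac{3L}{2}$ and $\tfrac{5L}{2}$. The arithmetic is routine and there is slack compared to my sharpest bounds, so the choice of splits is not delicate.

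The only genuinely subtle point is the differentiability of the empirical projection $G$ and the form of its partial derivatives, i.e.\ the chain rule translating the Lions derivative of $g$ into the Euclidean gradient of $G$; this is standard (and is in fact proven in the cited Lemma 4.2 of \cite{Kumar2021}), so I would invoke it and devote the body of the proof to the Taylor estimate and the Young/Cauchy--Schwarz bookkeeping just described.
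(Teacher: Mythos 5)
The paper does not supply a proof of Lemma \ref{lem:mvt}; it simply refers to Lemma 4.2 of \cite{Kumar2021}. Your argument is correct and is precisely the expected one: view the empirical projection $G$ as a $C^1$ function on $(\mathbb{R}^d)^N$, use the chain rule for the Lions derivative to identify $\partial_{y^k}G$, write the Taylor remainder as $\int_0^1\big[\nabla G(y_\theta)-\nabla G(\bar x)\big]\cdot(x-\bar x)\,d\theta$, and control it via the Lipschitz hypotheses together with the coupling bound $\mathcal{W}_2(\mu^N_\theta,\bar\mu^N)\le\theta\big(\frac{1}{N}\sum_{k}|x^k-\bar x^k|^2\big)^{1/2}$, Cauchy--Schwarz, and Young. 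Your bookkeeping in fact gives constants at least as good as the stated $\frac{3L}{2}$ and $\frac{5L}{2}$, which is harmless since the lemma only asserts an upper bound.
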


\begin{acks}[Acknowledgments]
The authors would like to thank the anonymous referees whose comments improved the quality of this paper.
\end{acks}

\begin{funding}
G.~dos Reis acknowledges support from the \emph{Funda{\c c}\~ao para a Ci$\hat{e}$ncia e a Tecnologia} (Portuguese Foundation for Science and Technology) through the project UIDB/00297/2020 and UIDP/00297/2020 (Center for Mathematics and Applications).
\end{funding}

			\subsection*{Authors' Addresses}\hfill\\
		
				\noindent Sani Biswas, Department of Mathematics, Indian Institute of Technology Roorkee,  Roorkee, 247 667, India.  \\{\tt 
				sbiswas2@ma.iitr.ac.in}\\ 
				
	\noindent Chaman Kumar, Department of Mathematics, Indian Institute of Technology Roorkee,  Roorkee, 247 667, India.  \\{\tt 
				chaman.kumar@ma.iitr.ac.in}\\				
				
			\noindent Neelima, Department of Mathematics, Ramjas College, University of Delhi, Delhi, 110 007, India. \\{\tt 
				neelima\_maths@ramjas.du.ac.in} \\

			\noindent Gon\c calo dos Reis, School of Mathematics, University of Edinburgh, Peter Guthrie Tait Road, Edinburgh, EH9 3FD, United Kingdom, and Centro de Matem\'atica e Aplica\c c \~oes (CMA), FCT, UNL, Portugal. \\{\tt
				G.dosReis@ed.ac.uk}\\

			\noindent Christoph Reisinger, Mathematical Institute, University of Oxford.
			Andrew Wiles Building, Woodstock Road, Oxford, OX2 6GG, UK. \\{\tt christoph.reisinger@maths.ox.ac.uk}\\


		\end{document}